\newcommand{\SDR}[5]{\xymatrix{*[r]{#1} \ar@<1ex>[r]^-{#3} \ar@(ul,dl)[]_{#5} & #2 \ar@<1ex>[l]^-{#4}}}
\newcommand{\bigSDR}[5]{\xymatrix{*[r]{#1} \ar@<1ex>[rr]^-{#3} \ar@(ul,dl)[]_{#5} && #2 \ar@<1ex>[ll]^-{#4}}}
\newcommand{\bigbigSDR}[5]{\xymatrix{*[r]{#1} \ar@<1ex>[rrr]^-{#3} \ar@(ul,dl)[]_{#5} &&& #2 \ar@<1ex>[lll]^-{#4}}}
\newcommand{\kk}{\Bbbk}
\newcommand{\tensor}{\otimes}
\newcommand{\CC}{\mathbb{C}}
\newcommand{\QQ}{\mathbb{Q}}
\newcommand{\ZZ}{\mathbb{Z}}
\newcommand{\Hom}{\operatorname{Hom}}
\newcommand{\as}{\text{\normalfont{<}}} 
\newcommand{\gl}{\mathfrak{g}}
\newcommand{\im}{\operatorname{im}}
\newcommand{\coker}{\operatorname{coker}}
\newcommand{\rank}{\operatorname{rank}}
\newcommand{\CP}[1]{\CC \hspace{-2.5pt} \operatorname{P}^{#1}}
\newcommand{\set}[2]{\left\{ #1 \mid #2 \right\}}
\newcommand{\LL}{\mathbb{L}}
\newtheorem{theorem}{Theorem}
\newtheorem{proposition}[theorem]{Proposition}
\newtheorem{lemma}[theorem]{Lemma}
\newtheorem{corollary}[theorem]{Corollary}
\theoremstyle{definition}
\newtheorem*{definition*}{Definition}
\newtheorem{definition}[theorem]{Definition}
\newtheorem{example}[theorem]{Example}
\newtheorem{remark}[theorem]{Remark}
\numberwithin{theorem}{section}
\title{Koszul $A_\infty$-algebras and free loop space homology}
\author{Alexander Berglund and Kaj B\"orjeson}
\begin{document}

\begin{abstract}
We introduce a notion of Koszul $A_\infty$-algebra that generalizes Priddy's notion of a Koszul algebra and we use it to construct small $A_\infty$-algebra models for Hochschild cochains. As an application, this yields new techniques for computing free loop space homology algebras of manifolds that are either formal or coformal (over a field or over the integers). We illustrate these techniques in two examples.
\end{abstract}

\maketitle

\section{Introduction}
Koszul algebras were introduced by Priddy \cite{Priddy} and have since then played an important role in algebraic topology, representation theory and homological algebra, see \cite{PolishchukPositselski,LodayVallette} for introductory accounts.~$A_\infty$-algebras are generalizations of associative algebras where one relaxes the associativity constraint up to (coherent) homotopy. They were introduced by Stasheff \cite{Stasheff} in the study of homotopy associative $H$-spaces, but have found applications in many other branches of mathematics, see \cite{Keller} for a readable introduction.

In this paper, we introduce a notion of Koszul $A_\infty$-algebra that generalizes Priddy's notion of a Koszul algebra (see Definition \ref{def:koszul}).
The principal technical result is the following characterization of Koszul $A_\infty$-algebras, which generalizes a known characterization of Koszul algebras (see \cite[Remark 2.13]{BerglundBorjeson}). Recall that a dg coalgebra is called \emph{formal} if it is quasi-isomorphic to its homology through maps of dg coalgebras.

\begin{theorem}
An $A_\infty$-algebra $A$ is quasi-isomorphic to a Koszul $A_\infty$-algebra if and only if the bar construction $BA$ is formal as a dg coalgebra. Moreover, every Koszul $A_\infty$-algebra is quasi-isomorphic to a minimal Koszul $A_\infty$-algebra.
\end{theorem}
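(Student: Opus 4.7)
The plan is to pass to the bar-cobar equivalence between the homotopy categories of (connected) $A_\infty$-algebras and of conilpotent dg coalgebras: a morphism $f : A \to A'$ of $A_\infty$-algebras is a quasi-isomorphism if and only if $Bf : BA \to BA'$ is a quasi-isomorphism of dg coalgebras. This translates both sides of the biconditional into conditions on $BA$, so the theorem becomes a purely coalgebraic statement once Definition \ref{def:koszul} is unfolded.

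For the direction ($\Rightarrow$), I would just check directly from Definition \ref{def:koszul} that $BA'$ is formal when $A'$ is Koszul. I expect this to be almost immediate: the Koszul axiom will typically impose a weight grading making $BA'$ bigraded with cohomology supported on a diagonal, so that projection onto the diagonal furnishes a dg coalgebra quasi-isomorphism $BA' \to H(BA')$. Composed with $BA \simeq BA'$ this yields formality of $BA$.

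For ($\Leftarrow$), assume $BA \simeq H(BA)$ as dg coalgebras. To produce a Koszul model I would apply Kadeishvili's theorem to transport the structure onto $H(A)$, obtaining a minimal $A_\infty$-algebra $A^{\min}\simeq A$ with $BA^{\min}\simeq BA\simeq H(BA)$. The coalgebra $H(BA)$ carries a natural tensor-length (weight) grading inherited from $BA$, and by choosing the homotopy retract used for transfer to respect this weight filtration, the induced higher products $m_n$ on $A^{\min}$ inherit the grading required by Definition \ref{def:koszul}. This argument also settles the ``moreover'' clause, since $A^{\min}$ is minimal by construction.

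The main obstacle is precisely the last verification: proving that the transferred structure on $A^{\min}$ is genuinely Koszul, not merely a minimal model of something with formal bar construction. This requires picking the contraction onto $H(BA)$ so that it is compatible with the weight filtration, and then reading off the required degree conditions on the $m_n$ from the explicit transfer formulas. An alternative route would be to work directly on the coalgebra side, realizing the desired Koszul $A_\infty$-algebra as (a minimal model of) the cobar construction $\Omega H(BA)$; the technical cost then shifts to matching this with $A$ up to quasi-isomorphism, but the bar-cobar adjunction should make the identification essentially formal.
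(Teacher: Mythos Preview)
Your forward direction is essentially the paper's: when $A$ is Koszul the inclusion $A^{\as}\hookrightarrow BA$ is a quasi-isomorphism of dg coalgebras with zero-differential source, so $BA$ is formal. (The paper uses the inclusion rather than a projection; the inclusion is manifestly a coalgebra map since $A^{\as}$ is defined as a subcoalgebra of $BA$, whereas a projection $BA\to H_*(BA)$ would require an extra argument to be a coalgebra map.)

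Your primary approach to the converse has a genuine gap. You propose to transfer the structure to $A^{\min}=H_*(A)$ and then endow it with a weight grading coming from ``the tensor-length grading on $H(BA)$.'' But for a general $A_\infty$-algebra the bar differential does not preserve tensor length---the component built from $m_n$ lowers it by $n-1$---so tensor length gives only a filtration on $BA$, not a grading that descends to $H_*(BA)$. More fundamentally, there is no candidate weight decomposition of $H_*(A)$ at all: the hypothesis that $BA$ is formal says nothing about any internal grading of $A$, so ``choosing the retract to respect the weight filtration'' presupposes exactly the structure you are trying to construct.

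Your alternative route is the correct one and is exactly what the paper does. One takes $\Omega H_*(BA)$ as the Koszul model: the tensor-length grading $\Omega C(-k)=(s^{-1}C)^{\otimes k}$ for $C=H_*(BA)$ is Koszul because the canonical map $C\to B\Omega C$ is a weight-zero quasi-isomorphism, and formality of $BA$ together with $A\simeq\Omega BA$ supplies the chain $A\simeq\Omega BA\simeq\Omega H_*(BA)$. The ``moreover'' clause is then handled separately (Theorem~\ref{thm:minimal}): starting from a Koszul $A$ which \emph{already} carries a weight grading, one chooses a contraction onto $H_*(A)$ with weight-homogeneous pieces and reads off from the explicit transfer formulas that each transferred $m_n'$ has weight $n-2$. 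Minimality is thus obtained by a second transfer applied after a Koszul model has been produced, not in the same stroke as the converse direction.
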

Furthermore, we show that several of the main features of Koszul algebras carry over to Koszul $A_\infty$-algebras.
In particular, if $A$ is Koszul, then the homology of the bar construction $H_*(BA)$ may be computed as the Koszul dual coalgebra $A^\as$, which can be read off from a presentation for $A$. We also construct small $A_\infty$-algebra models for the Hochschild cochains on Koszul $A_\infty$-algebras that facilitate the computation of Hochschild cohomology (see Theorem \ref{thm:koszulhochschild}).

Our work is motivated by the problem of computing the homology of free loop spaces of manifolds, together with algebraic structure such as the Chas-Sullivan loop product \cite{ChasSullivan}. Free loop space homology is interesting because of its relation to closed geodesics (see e.g.~\cite[Chapter 5]{FelixOpreaTanre} and \cite{GoreskyHingston}) and because it provides potentially interesting examples of `homological conformal field theories' (see \cite{Godin}). In previous work \cite{BerglundBorjeson}, we explained how Koszul algebras can be used to compute $H_*(LM)$ for manifolds $M$ that are both formal and coformal. While there are interesting examples of formal and coformal manifolds, this is a rather restrictive constraint. The new notion of Koszul $A_\infty$-algebras introduced in this paper allows us to treat manifolds that are \emph{either} formal or coformal, not necessarily both. Our main results are summarized by the following theorem.
\begin{theorem}
Let $M$ be a simply connected topological space and let $\kk$ be a field.
\begin{enumerate}
\item The space $M$ is formal over $\kk$ if and only if the homology of the based loop space $H_*(\Omega M;\kk)$ admits a minimal Koszul $A_\infty$-algebra structure making it quasi-isomorphic to $C_*(\Omega M;\kk)$. In this situation, the homology of $M$ is isomorphic to the Koszul dual coalgebra,
$$H_*(M;\kk) \cong H_*(\Omega M;\kk)^\as.$$

\item The space $M$ is coformal over $\kk$ if and only if its homology $H_*(M;\kk)$ admits a minimal Koszul $A_\infty$-coalgebra structure making it quasi-isomorphic to $C_*(M;\kk)$. In this situation, the homology of the based loop space is isomorphic to the Koszul dual algebra,
$$H_*(\Omega M;\kk) \cong H_*(M;\kk)^!.$$
\end{enumerate}
In either situation, there is a twisting morphism
$$\kappa\colon H_*(M;\kk)\to H_*(\Omega M;\kk)$$
such that the twisted convolution $A_\infty$-algebra
$$\Hom^\kappa(H_*(M;\kk),H_*(\Omega M;\kk))$$
is quasi-isomorphic, as an $A_\infty$-algebra, to the Hochschild cochains on $C_*(\Omega M;\kk)$.

In particular, if $M$ is a $d$-dimensional manifold that is formal or coformal over $\kk$, then there is an isomorphism of graded algebras
$$H_{*+d}(LM;\kk) \cong H_* \Hom^\kappa(H_*(M;\kk),H_*(\Omega M;\kk)),$$
where the left hand side carries the Chas-Sullivan loop product.
\end{theorem}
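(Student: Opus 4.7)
The plan is to derive each part from the principal technical theorem combined with classical bar/cobar duality for simply connected spaces. For part (1), I set $A = C_*(\Omega M;\kk)$ and invoke the Adams--Eilenberg--Moore equivalence $BA \simeq C_*(M;\kk)$ of dg coalgebras, valid since $M$ is simply connected. Formality of $M$ over $\kk$ is then equivalent to formality of $BA$ as a dg coalgebra, so the principal technical theorem above yields that $A$ is quasi-isomorphic to a minimal Koszul $A_\infty$-algebra; by homotopy transfer this structure lives on $H_*(\Omega M;\kk)$. The identification $H_*(\Omega M;\kk)^\as \cong H_*(M;\kk)$ follows from the general formula $B^\as \cong H_*(BB)$ for a Koszul $A_\infty$-algebra $B$, developed earlier in the paper in the discussion of the Koszul dual coalgebra.

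Part (2) is dual: take the $A_\infty$-coalgebra $C = C_*(M;\kk)$ and use the cobar equivalence $\Omega C \simeq C_*(\Omega M;\kk)$ of dg algebras. Coformality of $M$ over $\kk$ means $C_*(\Omega M;\kk)$ is formal as a dg algebra, hence $\Omega C$ is formal. Applying the $A_\infty$-coalgebra version of the principal technical theorem (obtained by linear duality, or by a parallel argument) then gives a minimal Koszul $A_\infty$-coalgebra structure on $H_*(M;\kk)$, with Koszul dual algebra $H_*(M;\kk)^! \cong H_*(\Omega M;\kk)$.

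In either case the resulting Koszul pair supplies a canonical twisting morphism $\kappa\colon H_*(M;\kk) \to H_*(\Omega M;\kk)$, and Theorem \ref{thm:koszulhochschild} provides the $A_\infty$-quasi-isomorphism between the twisted convolution algebra $\Hom^\kappa(H_*(M;\kk),H_*(\Omega M;\kk))$ and the Hochschild cochain complex of $C_*(\Omega M;\kk)$. For the closing Chas--Sullivan statement I invoke the theorem of Cohen--Jones: for a simply connected closed oriented $d$-manifold $M$, there is an isomorphism of graded algebras $HH^*(C_*(\Omega M;\kk)) \cong H_{*+d}(LM;\kk)$ intertwining the Hochschild cup product with the loop product. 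Composing with the previous quasi-isomorphism yields the stated isomorphism.

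I expect the most delicate step to be the full content of Theorem \ref{thm:koszulhochschild}: one needs the $A_\infty$-structure on the convolution algebra to match that on the Hochschild cochains, since the final multiplicative comparison with Cohen--Jones relies on having an honest $A_\infty$-quasi-isomorphism rather than merely a chain-level equivalence. A secondary technical point is ensuring that the orientation and Poincar\'e-duality input underlying Cohen--Jones is available over the given field $\kk$ (or over $\ZZ$ in the integral variant mentioned in the introduction).
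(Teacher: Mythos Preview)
Your proposal is correct and follows essentially the same route as the paper: the paper proves parts (1) and (2) as Theorems \ref{thm:formalkoszul} and \ref{thm:coformalkoszul} by exactly the argument you outline (apply Theorem \ref{koszulequivalencetheorem} to $A=C_*(\Omega M;\kk)$ via the equivalence $BA\simeq C_*(M;\kk)$, and dually), then combines these with Theorem \ref{thm:koszulhochschild} and the string topology isomorphism in Theorem \ref{stringtopologyformaltheorem}. One small attribution point: the ring isomorphism $H_{*+d}(LM;\kk)\cong HH^*(C_*(\Omega M;\kk))$ that you need is the one the paper credits to Malm rather than Cohen--Jones (the latter establish the version with $C^*(M;\kk)$); this does not affect the mathematics of your argument.
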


This generalizes \cite[Theorem 1.2]{BerglundBorjeson}. As an illustration, we offer two case studies where the methods of \cite{BerglundBorjeson} do not apply, but where the new methods do apply. The first is an example of a formal but non-coformal manifold, $\CP{n}$. The Chas-Sullivan algebra of $\CP{n}$ was computed in \cite{CohenJonesYan}, but the methods here give a streamlined computation (in fact, the twisted convolution algebra model may be viewed as a chain-level refinement of the Cohen-Jones-Yan spectral sequence). The second example is a certain coformal but non-formal $7$-manifold $M$, obtained by pulling back the Hopf fibration $\eta\colon S^7\to S^4$ along the collapse map $S^2\times S^2 \to S^4$. We show that this manifold is coformal but not formal over $\ZZ$ and compute $H_{*+7}(LM;\ZZ)$. This calculation is new.


\section{Koszul $A_\infty$-algebras}
In this section we introduce a notion of Koszul $A_\infty$-algebra that extends Priddy's notion of a Koszul algebra \cite{Priddy}. First, let us recall the definition of $A_\infty$-algebras. We will follow the sign conventions of Lef\`evre-Hasegawa 
\cite{Lefevre-Hasegawa}.
\begin{definition}
Let $\kk$ be a commutative ring. An \emph{$A_\infty$-algebra} over $\kk$ is a graded $\kk$-module $A = \{A_i\}_{i\in \ZZ}$ together with maps
$$m_n\colon A^{\tensor n} \rightarrow A,\quad n\geq 1,$$
of degree $n-2$ such that
$$\sum_{\substack{r+s+t = n \\ u = r+1+t}} (-1)^{rs+t} m_u(1^{\tensor r}\tensor m_s\tensor 1^{\tensor t}) = 0$$
for every $n\geq 1$.
\end{definition}

Note that an $A_\infty$-algebra $A$ such that $m_n = 0$ for $n\geq 3$ is the same thing as a (non-unital) dg algebra. The differential $m_1$ of $A$ will also be denoted $d_A$.

\begin{definition}
\label{weight grading}
A \emph{weight grading} on an $A_\infty$-algebra $A$ is a decomposition of $A$ as a direct sum of graded $\kk$-modules,
$$A = \bigoplus_{k\in \ZZ} A(k),$$
such that $m_n\colon A^{\tensor n} \to A$ is homogeneous of weight $n-2$, in the sense that
$$m_n\big(A(i_1)\tensor \cdots \tensor A(i_n) \big) \subseteq A(i_1+\cdots + i_n + n - 2).$$
\end{definition}

Note that each component $A(k)$ is itself a graded $\kk$-module; in effect $A$ is bigraded. An element $x\in A(k)_i$ is said to have \emph{weight} $w(x) = k$ and \emph{(homological) degree} $|x| = i$.

Next, recall that the \emph{bar construction} of an $A_\infty$-algebra $A$ is the dg coalgebra $BA = \big(T^c(sA),b\big)$, where $T^c(sA)$ is the tensor coalgebra on $sA$ and $b$ is the differential given by $b = b_0 + b_1 + b_2 +\cdots$, where
$$b_{n-1}[sx_1|\ldots|sx_m] = \sum_{k=0}^{m-n} (-1)^{\epsilon_k} [sx_1|\ldots|sx_k|sm_n(x_{k+1},\ldots,x_{k+n})|\ldots|sx_m],$$
and where the sign is given by
$$\epsilon_k = 1+\sum_{i=1}^k |sx_i| + \sum_{j=1}^n (n-j)|sx_{k+j}|.$$
If $A$ is equipped with a weight grading, then the bar construction $BA$ admits a weight grading where
$$w [sx_1|\ldots|sx_n] = w(x_1) + \cdots + w(x_n)+n.$$
(The suspension operator $s$ increases weight and homological degree by $1$.)
The differential $b$ then becomes homogeneous of weight $-1$.

A weight grading on $A$ is called \emph{negative} if $A(k) = 0$ for $k\geq 0$. In this situation, the bar construction is concentrated non-positive weights,
$$BA(0) \xrightarrow{b} BA(-1) \xrightarrow{b} BA(-2) \to \cdots.$$
The weight zero homology of the bar construction,
$$A^\as = H_*(BA)(0) = \ker\big(b\colon BA(0) \to BA(-1)\big),$$
will be called the \emph{Koszul dual coalgebra of $A$}. It is a graded subcoalgebra of $BA$, with trivial differential.

\begin{definition} \label{def:koszul}
\begin{enumerate}
\item A \emph{Koszul weight grading} on an $A_\infty$-algebra $A$ is a negative weight grading such that the homology of the bar construction $BA$ is concentrated in weight $0$.

\item An $A_\infty$-algebra is called \emph{Koszul} if it admits a Koszul weight grading.
\end{enumerate}
\end{definition}

Thus, $A$ is Koszul if and only if the inclusion $A^\as \to BA$ is a quasi-isomorphism.

\begin{remark}
The above definition is modeled on the following well-known characterization of Koszul algebras: a (non-unital) quadratic algebra $A$ is Koszul if and only if the grading induced by the (negative) wordlength in the generators is a Koszul weight grading, see e.g.~\cite[Theorem 3.4.4]{LodayVallette}. In this case, the weight grading on $BA$ corresponds to the `syzygy degree' of \cite[\S3.3.1]{LodayVallette}. As we will see, the notion of a Koszul weight grading is however more flexible and applies not only to quadratic algebras.
\end{remark}

\begin{proposition} \label{prop:gc}
Let $C$ be a graded coalgebra with zero differential. Then the cobar construction $\Omega C$ is a Koszul dg algebra; the grading by tensor length,
$$\Omega C(-k) = (s^{-1}C)^{\tensor k},$$
is a Koszul weight grading.
\end{proposition}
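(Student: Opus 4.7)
The plan is to verify the three conditions of Definition~\ref{def:koszul} in turn. The first two are essentially bookkeeping. The multiplication of $\Omega C$ is concatenation of tensors, $(s^{-1}C)^{\otimes i}\otimes (s^{-1}C)^{\otimes j}\to (s^{-1}C)^{\otimes i+j}$, and so has weight $0=2-2$. Since $d_C=0$, the cobar differential is the derivation determined on generators by $d(s^{-1}c) = -s^{-1}c_{(1)}\cdot s^{-1}c_{(2)}$, which maps $\Omega C(-1)$ into $\Omega C(-2)$; as a derivation, $m_1$ therefore has weight $-1=1-2$. All higher operations vanish, and $\Omega C(k)=0$ for $k\ge 0$, so the grading is negative.

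The substantive step is to show that $H_*(B\Omega C)$ is concentrated in weight~$0$. My approach is to exhibit a weight-preserving quasi-isomorphism from $C$ to $B\Omega C$ via the bar--cobar resolution. The tautological map $\tau\colon C\to\Omega C$, $\tau(c) = s^{-1}c$, is a twisting morphism: the Maurer--Cartan equation $d\tau+\tau\cp\tau=0$ is precisely the formula defining the cobar differential. Under the standard bijection between twisting morphisms and dg coalgebra morphisms, $\tau$ corresponds to
\[
f_\tau\colon C\longrightarrow B\Omega C,\qquad
f_\tau(c) \;=\; \sum_{n\ge 1}\,[s(s^{-1}c_{(1)})\mid \cdots\mid s(s^{-1}c_{(n)})],
\]
with the sum over iterated reduced coproducts. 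Each summand lies in weight~$0$, contributing $+1$ from each of the $n$ bars and $-1$ from each of the $n$ tensor factors in $s^{-1}C$. Thus $f_\tau$ preserves weight, with $C$ placed in weight~$0$.

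The key input I would cite rather than reprove is the classical fact that $f_\tau$ is a quasi-isomorphism (see, e.g., \cite[Theorem~2.3.2]{LodayVallette}); this is the main obstacle in the sense that it is the only nontrivial ingredient. Granting it, the conclusion is immediate: the differential~$b$ strictly lowers weight, so the decomposition $B\Omega C = \bigoplus_{w\le 0} B\Omega C(w)$ induces a decomposition $H_*(B\Omega C)=\bigoplus_{w} H_*(B\Omega C)(w)$, and the weight-preserving quasi-isomorphism $f_\tau$ identifies $H_*(B\Omega C)$ entirely with its weight-$0$ summand. Hence the tensor-length grading is a Koszul weight grading on $\Omega C$, and $f_\tau$ simultaneously identifies the Koszul dual coalgebra $(\Omega C)^\as$ with $C$ itself.
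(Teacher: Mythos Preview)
Your proof is correct and follows essentially the same route as the paper: both invoke the canonical (bar--cobar unit) quasi-isomorphism $C\to B\Omega C$ and observe that it is weight-homogeneous of weight~$0$, forcing $H_*(B\Omega C)$ to sit entirely in weight~$0$. Your version simply unpacks what the paper compresses into one sentence---you verify the weight conditions on $m_1,m_2$ explicitly, write out the formula for $f_\tau$, and check termwise that it lands in weight~$0$---while the paper just declares $C$ to be concentrated in weight~$0$ and asserts that the canonical map is weight-homogeneous.
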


\begin{proof}
We may view $C$ as a weight graded coalgebra by declaring it to be concentrated in weight $0$. The canonical quasi-isomorphism $C\to B\Omega C$ is weight homogeneous, showing the homology of $B\Omega C$ is concentrated in weight $0$.
\end{proof}

A chain complex $(A,m_1)$ of $\kk$-modules is called \emph{split} if there exists a contraction of chain complexes,
\begin{equation} \label{eq:contraction}
\bigSDR{(A,m_1)}{(H_*A,0),}{f}{g}{h}
\end{equation}
meaning $f$ and $g$ are chain maps such that $fg = 1$ and $h$ is a chain homotopy between $gf$ and $1$.
For instance, if $\kk$ is a PID then every chain complex of free $\kk$-modules with $\kk$-free homology is split. In particular, if $\kk$ is a field, then every chain complex is split. Recall that an $A_\infty$-algebra is called \emph{minimal} if $m_1=0$.

\begin{theorem} \label{thm:minimal}
Every Koszul $A_\infty$-algebra whose underlying chain complex is split is quasi-isomorphic to a minimal Koszul $A_\infty$-algebra.
\end{theorem}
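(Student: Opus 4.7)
The plan is to apply Kadeishvili's homotopy transfer theorem to the contraction of $(A, m_1)$ onto $(H_*A, 0)$ provided by the split hypothesis, and to arrange matters so that the weight grading is preserved by the construction. The key bookkeeping identity is this: since $m_1$ has weight $-1$, the homotopy relation $m_1 h + h m_1 = \id - gf$ forces $h$ to have weight $+1$ whenever $f$ and $g$ have weight $0$. It is this shift that will make the transfer formula weight-homogeneous.

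First I would refine the given contraction $(f, g, h)$ to one respecting the bigrading of $A$ by (weight, homological degree), viewing $m_1$ as a differential of bidegree $(-1, -1)$. The goal is to produce $(f, g, h)$ with $f, g$ of bidegree $(0, 0)$ and $h$ of bidegree $(+1, +1)$. Over a field this is immediate from bigraded splitting of the short exact sequences of cycles and boundaries; over a general coefficient ring one lifts the given ungraded splits component by component along the weight filtration, the negativity of the grading providing boundedness for the induction. This endows $H_*A$ with the canonical weight grading $(H_*A)(k) = Z(k)/B(k)$, where $Z(k), B(k) \subseteq A(k)$ denote the cycles and boundaries of weight $k$.

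Next, feeding this bigraded contraction into the standard tree-sum formula produces a minimal $A_\infty$-structure $\{m_n^\sim\}$ on $H_*A$ and an $A_\infty$-quasi-isomorphism $F\colon H_*A \to A$. The weight of the summand of $m_n^\sim$ labelled by a stable planar tree with $n$ leaves and $v$ internal vertices of arities $k_1, \dots, k_v$ is $\sum_i (k_i - 2) + (v - 1) \cdot w(h)$, the second term counting the $v-1$ internal edges each decorated by $h$. Using $\sum k_i = n + v - 1$, this simplifies to $(n - v - 1) + (v - 1) = n - 2$, so $m_n^\sim$ is weight-homogeneous of the required weight. The parallel count for $F_n$ (which instead places $h$ on the root edge) yields weight $n - 1$, which is exactly what is needed for the induced coalgebra map $BF\colon B(H_*A) \to BA$ to preserve the bar weight $\sum w(x_i) + n$.

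Finally, the weight grading on $H_*A$ is negative because $(H_*A)(k)$ is a subquotient of $A(k) = 0$ for $k \geq 0$, and since $BF$ is a weight-preserving quasi-isomorphism one gets $H_*(B(H_*A)) \cong H_*(BA)$, which is concentrated in weight $0$ by the Koszul hypothesis on $A$. Hence $H_*A$ with the transferred structure is a minimal Koszul $A_\infty$-algebra quasi-isomorphic to $A$. The main obstacle is the first step: producing a bigraded contraction from a merely ungraded one. Over a field this is automatic, but over a general ring it requires the inductive weight-by-weight refinement sketched above, where one must verify that splittings chosen at each weight level can be extended compatibly with the whole bigraded structure.
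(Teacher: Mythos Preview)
Your proposal is correct and follows essentially the same route as the paper's proof: arrange a weight-graded contraction (with $f,g$ of weight $0$ and $h$ of weight $+1$), apply the homotopy transfer theorem, verify from the explicit tree formulas that the transferred $m_n'$ has weight $n-2$ and the induced bar map is weight-preserving, and conclude that $H_*(BH_*A)$ inherits concentration in weight $0$ from $H_*(BA)$. The paper simply asserts the bigraded contraction ``without loss of generality'' and that the transferred operations have the correct weight ``by a glance at the explicit formulas''; you spell out both points---the tree-edge count $\sum(k_i-2)+(v-1)=n-2$ and the need to refine an ungraded splitting to a bigraded one---so your write-up is, if anything, more detailed than the paper's on exactly the steps that deserve it.
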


\begin{proof}
Let $\big(A,\{m_n\}\big)$ be a Koszul $A_\infty$-algebra. Since the differential $m_1$ of $A$ is homogeneous of weight $-1$, the homology $H_*A$ may be equipped with a weight grading as follows:
$$H_*A(k) = \ker(A(k) \xrightarrow{m_1} A(k-1))/\im( A(k+1)\xrightarrow{m_1} A(k)).$$
Since $(A,m_1)$ is assumed to be split, there exists a contraction as in \eqref{eq:contraction}
and we may without loss of generality assume that $f$ and $g$ are homogeneous of weight $0$ and that $h$ is homogeneous of weight $1$.
The homotopy transfer theorem (see, e.g., \cite{Berglund2}) produces a minimal $A_\infty$-algebra structure $m' = \{m_n'\}_{n\geq 2}$ on $H_*A$ and an $A_\infty$-quasi-isomorphism
$$\{g_n\}_{n\geq 1}\colon \big(H_*A,\{m_n'\}\big) \to \big(A,\{m_n\}_{n\geq 1}\big)$$
with $g_1 = g$. The $A_\infty$-quasi-isomorphism $\{g_n\}$ corresponds to a quasi-isomorphism of dg coalgebras $G\colon B(A,m) \to B(H_*A,m')$. A glance at the explicit formulas for the transferred structure shows that $m_n'$ is homogeneous of weight $n-2$ and that $G$ is homogenous of weight $0$. Since $BA$ has homology concentrated in weight $0$, it follows that so does $BH_*A$. In other words, the weight grading on $(H_*A,m')$ is Koszul.
\end{proof}

\begin{corollary}
Let $C$ be a graded coalgebra. If $\Omega C$ is split as a chain complex, then the homology of the cobar construction $H_*(\Omega C)$ admits a minimal Koszul $A_\infty$-algebra structure, such that it is quasi-isomorphic to $\Omega C$.
\end{corollary}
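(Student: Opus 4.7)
The plan is to combine Proposition \ref{prop:gc} with Theorem \ref{thm:minimal} essentially directly; the corollary is a quick consequence of the two results preceding it.

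First I would invoke Proposition \ref{prop:gc} to equip $\Omega C$ with the Koszul weight grading given by $\Omega C(-k) = (s^{-1}C)^{\otimes k}$. In particular, $\Omega C$ is a Koszul $A_\infty$-algebra (concentrated in the dg-algebra part, with $m_n=0$ for $n\geq 3$). Combined with the hypothesis that the underlying chain complex of $\Omega C$ is split, this puts us precisely in the situation of Theorem \ref{thm:minimal}.

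Applying Theorem \ref{thm:minimal} then yields a minimal Koszul $A_\infty$-algebra structure $\{m_n'\}_{n\geq 2}$ on the homology $H_*(\Omega C)$ together with an $A_\infty$-quasi-isomorphism $H_*(\Omega C) \to \Omega C$. The weight grading on $H_*(\Omega C)$ is the one inherited from the grading on $\Omega C$ by tensor length, and the theorem guarantees that this induced weight grading is Koszul.

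There is no genuine obstacle here: the only thing to notice is that Proposition \ref{prop:gc} hands us a Koszul weight grading on $\Omega C$ and Theorem \ref{thm:minimal} only requires splitness of the underlying chain complex in order to transfer it homotopically to the homology while preserving Koszulness. The corollary is really just the combination of these two facts packaged together.
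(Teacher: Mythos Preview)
Your proposal is correct and follows exactly the paper's approach: the paper's proof consists of the single sentence ``Combine Theorem \ref{thm:minimal} and Proposition \ref{prop:gc}.'' Your write-up simply unpacks this combination, and there is nothing more to it.
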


\begin{proof}
Combine Theorem \ref{thm:minimal} and Proposition \ref{prop:gc}.
\end{proof}


In \cite[Corollary 2.10]{Berglund} (see also \cite[Remark 2.13]{BerglundBorjeson}), it was shown that a graded algebra $A$ is Koszul if and only if the bar construction $BA$ is formal as a dg coalgebra, giving an intrinsic characterization of the Koszul property for algebras. The next theorem extends this result to $A_\infty$-algebras. For simplicity we state the result when the ground ring $\kk$ is a field. 

\begin{theorem} \label{thm:koszul}
\label{koszulequivalencetheorem}
The following are equivalent for an $A_\infty$-algebra $A$ over a field $\kk$:
\begin{enumerate}
\item The $A_\infty$-algebra $A$ is quasi-isomorphic to a Koszul $A_\infty$-algebra.
\item The bar construction $BA$ is formal as a dg coalgebra.
\end{enumerate}
\end{theorem}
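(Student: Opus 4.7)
The plan is to deduce both implications from the bar--cobar adjunction together with the direct observation that a Koszul $A_\infty$-algebra has formal bar construction. Throughout I will rely on the standard facts, available over a field, that a morphism of $A_\infty$-algebras is a quasi-isomorphism precisely when its bar construction is a quasi-isomorphism of dg coalgebras, that the cobar construction $\Omega$ preserves quasi-isomorphisms between conilpotent dg coalgebras, and that the counit $\Omega BA \to A$ is a quasi-isomorphism of $A_\infty$-algebras for any $A_\infty$-algebra $A$.

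For the implication $(1) \Rightarrow (2)$, I would start from the hypothesis that $A$ is quasi-isomorphic to a Koszul $A_\infty$-algebra $A'$. This gives a dg coalgebra quasi-isomorphism between $BA$ and $BA'$, so it suffices to prove that $BA'$ is formal. The key step is to observe that the sub-dg-coalgebra $(A')^\as \hookrightarrow BA'$ carries trivial differential (being the weight-$0$ kernel of $b$) and induces an isomorphism on homology in every weight: on weight $0$ by the definitional identification $(A')^\as = H_*(BA')(0)$, on strictly negative weights because both sides vanish (the right-hand side by the Koszul hypothesis), and on positive weights trivially. Hence this inclusion exhibits $BA'$, and therefore $BA$, as formal.

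For $(2) \Rightarrow (1)$, I set $C = H_*(BA)$, a graded coalgebra with zero differential, and plan to produce an explicit Koszul model via the cobar construction. Formality provides a zig-zag of dg coalgebra quasi-isomorphisms between $BA$ and $C$; applying $\Omega$ yields a zig-zag of dg algebra quasi-isomorphisms between $\Omega BA$ and $\Omega C$; and the counit $\Omega BA \to A$ then identifies $A$, as an $A_\infty$-algebra, with $\Omega C$. Proposition \ref{prop:gc} finishes the job, since it asserts that $\Omega C$ with its tensor-length grading is a Koszul (dg, hence $A_\infty$-)algebra.

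The main obstacle I anticipate is controlling the homotopical behaviour of $\Omega$ along the formality zig-zag: to pull quasi-isomorphisms through $\Omega$, the intermediate dg coalgebras must lie in the class (conilpotent dg coalgebras) on which the bar--cobar Quillen equivalence operates. Over a field this can be arranged, since both $BA$ and $C = H_*(BA)$ are conilpotent and one may work throughout in the conilpotent model structure after suitable cofibrant/fibrant replacement; the remaining ingredients are then standard consequences of the bar--cobar Quillen equivalence.
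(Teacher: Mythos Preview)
Your proof is correct and follows essentially the same route as the paper: for $(1)\Rightarrow(2)$ you reduce to the Koszul case and use the quasi-isomorphism $A^\as\hookrightarrow BA$ with trivial differential on the source, and for $(2)\Rightarrow(1)$ you pass through $\Omega H_*(BA)$ and invoke Proposition~\ref{prop:gc}. The only cosmetic difference is that the paper asserts a single quasi-isomorphism $\Omega H_*(BA)\to\Omega BA$ rather than a zig-zag, and does not pause over the conilpotency issue you flag; your extra care there is warranted but does not change the argument.
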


\begin{proof}
Suppose that $A$ is quasi-isomorphic to a Koszul $A_\infty$-algebra. Since $B$ preserves $A_\infty$-quasi-isomorphisms, we may without loss of generality assume that $A$ is Koszul itself. In this case, the inclusion $A^{\as} \to BA$ is a quasi-isomorphism of dg coalgebras. Since the coalgebra $A^{\as}$ has trivial differential, this shows that $BA$ is formal.

Conversely, if $BA$ is formal, then there exists a quasi-isomorphism of dg algebras $\Omega H_*(BA)\to \Omega BA$. By Proposition \ref{prop:gc}, the dg algebra $\Omega H_*(BA)$ is Koszul. Since $A$ is $A_\infty$-quasi-isomorphic to $\Omega BA$, this shows that $A$ is quasi-isomorphic to a Koszul $A_\infty$-algebra.
%
\end{proof}

\subsection{Koszul $A_\infty$-coalgebras}
Everything in the previous section can be dualized.

\begin{definition}
An $A_\infty$-coalgebra is a graded $\kk$-module $C = \{C_i\}_{i\in \ZZ}$ together with maps
$$\Delta_n\colon C \to C^{\tensor n},\quad n\geq 1,$$
of degree $n-2$ such that 
$$\sum_{\substack{r+s+t = n \\ u = r+1+t}} (-1)^{rs+t} (1^{\tensor r}\tensor \Delta_s\tensor 1^{\tensor t})\Delta_u = 0$$
for every $n\geq 1$.
\end{definition}

\begin{definition}
A \emph{weight grading} on an $A_\infty$-coalgebra $C$ is a decomposition of $C$ as a direct sum of graded $\kk$-modules,
$$C = \bigoplus_{k\in \ZZ} C(k),$$
such that $\Delta_n\colon C \to C^{\tensor n}$ is homogeneous of weight $n-2$, in the sense that
$$\Delta_n(C(k)) \subseteq \bigoplus C(i_1)\tensor \cdots \tensor C(i_n),$$
where the sum is over all $i_1,\ldots,i_n$ such that $i_1+\cdots +i_n = k + n - 2$.
\end{definition}

The cobar construction on an $A_\infty$-coalgebra $C$ is defined as $\Omega C = \big(T(s^{-1} C),\delta\big)$, where the differential is a sum of derivations $\delta = \delta_0+\delta_1+\cdots$ determined by
$$\delta_{n-1}(s^{-1} x) = (s^{-1})^{\tensor n} \Delta_n(x).$$
If $C$ is weight graded, then the cobar construction is weight graded by
$$w(s^{-1} x_1 \tensor \cdots \tensor s^{-1} x_n) = w(x_1) + \cdots + w(x_n) - n.$$
Then $\delta$ becomes homogeneous of weight $-1$.

If $C$ is positively weight graded, then $\Omega C$ is concentrated in non-negative weights,
$$\cdots \xrightarrow{\delta} \Omega C(2) \xrightarrow{\delta} \Omega C(1) \xrightarrow{\delta} \Omega C(0).$$
The weight zero homology of the cobar construction,
$$C^! = H_*(\Omega C)(0) = \coker\big(\delta\colon \Omega C(1) \to \Omega C(0)\big),$$
is called the \emph{Koszul dual algebra} of $C$. It is a quotient graded algebra of $\Omega C$, with trivial differential.

\begin{definition}
\begin{enumerate}
\item A Koszul weight grading on an $A_\infty$-coalgebra $C$ is a positive weight grading such that the homology of $\Omega C$ is concentrated in weight $0$.

\item An $A_\infty$-coalgebra is called \emph{Koszul} if it admits a Koszul weight grading.
\end{enumerate}
\end{definition}

Thus, $C$ is Koszul if and only if the map $\Omega C\to C^!$ is a quasi-isomorphism.

\begin{remark} \label{rem:presentation}
Note that $\Omega C(0)$ may be identified with the tensor algebra $T(s^{-1}C(1))$ and the image of $\delta\colon \Omega C(1) \to \Omega C(0)$ with the two-sided ideal generated by
$$\sum_{n\geq 1} (s^{-1})^{\tensor n} \Delta_n(x)$$  
for $x\in C(2)$. Thus, the Koszul dual algebra of $C$ admits a presentation where $C(1)$ enumerates the generators and $C(2)$ enumerates the relations. This presentation is quadratic if and only if $\Delta_n = 0$ for all $n\ne 2$, i.e., if $C$ is a graded coalgebra with trivial differential and higher operations.
\end{remark}

The results in the previous section have obvious duals. We state these results below for reference.

\begin{proposition} \label{prop:bar koszul}
Let $A$ be a graded algebra with zero differential. Then the bar construction $BA$ is a Koszul dg coalgebra; the grading by bar length,
$$BA(k) = (sA)^{\tensor k}$$
is a Koszul weight grading.
\end{proposition}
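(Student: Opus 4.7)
The plan is to dualize the proof of Proposition \ref{prop:gc} by replacing the canonical quasi-isomorphism $C \to B\Omega C$ with its counterpart $\Omega BA \to A$. First I would view $A$ as a weight graded $A_\infty$-algebra concentrated in weight $0$; this is permitted because the only nonzero structure map is $m_2$, which has the required weight $2-2=0$. The induced weight grading on $BA$ then satisfies
$$w[sx_1|\ldots|sx_n] = w(x_1) + \cdots + w(x_n) + n = n,$$
matching the stated formula $BA(k) = (sA)^{\tensor k}$ and exhibiting a positive weight grading.

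Next I would invoke the canonical quasi-isomorphism of dg algebras $\varepsilon\colon \Omega BA \to A$ (the counit of the bar-cobar adjunction), which sends $s^{-1}[sa]$ to $a$ and vanishes on bars of length $\neq 1$. The key verification is that $\varepsilon$ is weight-homogeneous of weight $0$. A tensor generator $s^{-1}[sx_1|\ldots|sx_n]$ has weight $n-1$ in $\Omega BA$ by the formula $w(s^{-1}x) = w(x) - 1$, so the length-$1$ case occurs precisely in weight $0$ and matches the weight of its image in $A(0)$. A general element $s^{-1}[bar_1] \tensor \cdots \tensor s^{-1}[bar_k]$ of nonzero weight must have at least one factor coming from a bar of length $\neq 1$, which is killed by $\varepsilon$, so its image is zero. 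Since $A$ has its homology (trivially) concentrated in weight $0$, the weight-homogeneous quasi-isomorphism $\varepsilon$ forces the same for $\Omega BA$, which is the Koszul condition on $BA$.

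The main obstacle I anticipate is purely bookkeeping: making sure the shift conventions behave as claimed so that the weight on $\Omega BA$ really is the tensor-length shifted grading inherited from $BA$, and that $\varepsilon$ is indeed the counit of the bar-cobar adjunction composed with the projection onto length-one bars. Neither point should cause real trouble, but one has to be uniform about whether one works with the reduced or augmented bar construction, since the unit term $\kk \subseteq BA$ sits in weight $0$ and must be discarded for positivity of the weight grading.
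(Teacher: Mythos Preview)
Your proof is correct and is precisely the dualization the paper has in mind: the paper gives no separate argument for this proposition, stating only that the results of the previous section ``have obvious duals,'' and your argument is exactly the dual of the proof of Proposition~\ref{prop:gc}, replacing the unit $C \to B\Omega C$ by the counit $\varepsilon\colon \Omega BA \to A$. Your bookkeeping caveat about the (co)unit term sitting in weight $0$ is well taken but does not affect the argument, since the cobar construction is applied to the coaugmentation coideal $\overline{BA}$, which is genuinely positively weight graded.
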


\begin{theorem}
Every Koszul $A_\infty$-coalgebra whose underlying chain complex is split is quasi-isomorphic to a minimal Koszul $A_\infty$-coalgebra.
\end{theorem}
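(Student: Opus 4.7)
The plan is to dualize the proof of Theorem \ref{thm:minimal} essentially line by line. Since $\Delta_1$ is homogeneous of weight $-1$, the homology $H_*C$ inherits a weight grading from $C$ by
$$H_*C(k) = \ker\bigl(\Delta_1\colon C(k)\to C(k-1)\bigr)/\im\bigl(\Delta_1\colon C(k+1)\to C(k)\bigr),$$
and the splitting hypothesis allows one to choose a contraction of $(C,\Delta_1)$ onto $(H_*C,0)$ in which the maps $f$ and $g$ are homogeneous of weight $0$ and the chain homotopy $h$ is homogeneous of weight $1$. This is possible because $\Delta_1$ preserves the weight grading, so the usual splitting procedure can be carried out weight by weight.

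Next, I would invoke the homotopy transfer theorem for $A_\infty$-coalgebras to produce a minimal $A_\infty$-coalgebra structure $\{\Delta_n'\}_{n\geq 2}$ on $H_*C$ together with an $A_\infty$-quasi-isomorphism between $(C,\{\Delta_n\})$ and $(H_*C,\{\Delta_n'\})$ whose first component is $f$. Dually, this yields a quasi-isomorphism of dg algebras $F$ between the cobar constructions $\Omega(C,\Delta)$ and $\Omega(H_*C,\Delta')$. The explicit transfer formulas express each $\Delta_n'$ as a sum indexed by rooted trees with internal vertices labeled by the coproducts $\Delta_k$ and edges labeled by $f$, $g$, or $h$; a direct weight count, identical to the one in the algebra case, then shows that $\Delta_n'$ is homogeneous of weight $n-2$ and that $F$ is homogeneous of weight $0$.

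Finally, since $\Omega C$ has homology concentrated in weight $0$ by the Koszul hypothesis, the quasi-isomorphism $F$ forces $\Omega(H_*C,\Delta')$ to have the same property. Therefore the transferred weight grading on $H_*C$ is Koszul, completing the proof. The only step with any real content is verifying weight homogeneity of the transferred structure and of the comparison map; this is a routine tree-combinatorial check mirroring the one implicit in the proof of Theorem \ref{thm:minimal}, and the remainder of the argument is a formal dualization.
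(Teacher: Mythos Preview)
Your proposal is correct and is precisely the argument the paper intends: the paper does not give a separate proof of this theorem but simply states it as the ``obvious dual'' of Theorem~\ref{thm:minimal}, and your line-by-line dualization (weight-grade the homology via the weight~$-1$ map $\Delta_1$, choose a weight-homogeneous contraction, transfer, and check that the transferred $\Delta_n'$ and the comparison map $F$ on cobar constructions are weight-homogeneous) is exactly that dual.
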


\begin{corollary}
Let $A$ be a graded coalgebra. If $BA$ is split as a chain complex, then the homology of the bar construction $H_*(BA)$ admits a minimal $A_\infty$-coalgebra structure such that $H_*(BA)$ is Koszul and quasi-isomorphic to $BA$.
\end{corollary}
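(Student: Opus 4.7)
The plan is to mirror the proof of the earlier corollary on $\Omega C$, with each ingredient replaced by its dual. First, I would apply Proposition \ref{prop:bar koszul} to $A$: this endows the bar construction $BA$ with the bar-length grading $BA(k) = (sA)^{\tensor k}$, which is a Koszul weight grading, so $BA$ is a Koszul dg coalgebra, equivalently a Koszul $A_\infty$-coalgebra whose only nonzero structure maps are $\Delta_1 = b$ and the tensor comultiplication $\Delta_2$.

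Next, I would invoke the theorem stated immediately above the corollary—the $A_\infty$-coalgebra analogue of Theorem \ref{thm:minimal}. The hypothesis that $BA$ is split as a chain complex is exactly what that theorem requires, so it produces a minimal Koszul $A_\infty$-coalgebra quasi-isomorphic to $BA$. By construction (the dual homotopy transfer theorem) this minimal structure lives on $H_*(BA)$ and comes equipped with an $A_\infty$-coalgebra quasi-isomorphism $H_*(BA) \to BA$, which is the desired conclusion.

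I do not expect a genuine obstacle. The one detail worth verifying is that the transferred coproducts $\Delta_n'$ on $H_*(BA)$ are weight-homogeneous of weight $n-2$, and that $H_*(\Omega H_*(BA))$ remains concentrated in weight $0$, so that the resulting weight grading is Koszul. Both points are dual to what was checked in the proof of Theorem \ref{thm:minimal}: one chooses a contraction of $BA$ onto $H_*(BA)$ with $f,g$ of weight $0$ and $h$ of weight $1$, and then reads off weight-homogeneity from the explicit transfer formulas. Koszulness is inherited because the induced cobar map $\Omega H_*(BA) \to \Omega BA$ is a weight-homogeneous quasi-isomorphism, so the weight-zero concentration of homology transports across it.
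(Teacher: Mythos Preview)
Your proposal is correct and follows exactly the approach the paper intends: the corollary is stated among the ``obvious duals'' of the earlier results, and the earlier corollary's proof is simply ``Combine Theorem~\ref{thm:minimal} and Proposition~\ref{prop:gc}.'' You have dualized this precisely, combining Proposition~\ref{prop:bar koszul} with the $A_\infty$-coalgebra analogue of Theorem~\ref{thm:minimal}, and the extra care you take in verifying weight-homogeneity of the transferred coproducts mirrors the proof of Theorem~\ref{thm:minimal} itself.
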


\begin{theorem}
The following are equivalent for an $A_\infty$-coalgebra $C$:
\begin{enumerate}
\item $C$ is quasi-isomorphic to a Koszul $A_\infty$-coalgebra.
\item The cobar construction $\Omega C$ is formal as a dg algebra.
\end{enumerate}
\end{theorem}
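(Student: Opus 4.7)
The plan is to dualize the proof of Theorem \ref{thm:koszul} step by step, swapping algebras for coalgebras, the bar construction $B$ for the cobar construction $\Omega$, and the Koszul dual coalgebra $A^\as$ for the Koszul dual algebra $C^!$. The two key ingredients I rely on are: (a) the cobar functor $\Omega$ preserves $A_\infty$-quasi-isomorphisms of $A_\infty$-coalgebras, and (b) formality of a dg algebra is invariant under zig-zags of dg algebra quasi-isomorphisms, i.e., preserved under $A_\infty$-quasi-isomorphism.

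For the implication $(1)\Rightarrow(2)$, I would first use (a) to reduce to the case where $C$ itself carries a Koszul weight grading. By the definition of Koszul $A_\infty$-coalgebra, the canonical projection
$$\Omega C \twoheadrightarrow C^! = H_*(\Omega C)(0)$$
is a quasi-isomorphism of dg algebras (combining the hypothesis that $H_*(\Omega C)$ is concentrated in weight $0$ with the identification $\Omega C(0)/\im\delta = C^!$ of Remark \ref{rem:presentation}). Since $C^!$ has trivial differential, this exhibits $\Omega C$ as a dg algebra quasi-isomorphic to its own homology, hence formal.

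For the converse $(2)\Rightarrow(1)$, I would start from a quasi-isomorphism of dg algebras $H_*(\Omega C)\to \Omega C$ (or a zig-zag) witnessing formality, and apply the bar functor $B$ to obtain a dg coalgebra quasi-isomorphism $B H_*(\Omega C) \to B\Omega C$. By Proposition \ref{prop:bar koszul}, the source $BH_*(\Omega C)$ is a Koszul dg coalgebra, which in particular is a Koszul $A_\infty$-coalgebra. The unit of the bar–cobar adjunction provides an $A_\infty$-quasi-isomorphism $C \to B\Omega C$, so composing (in the $A_\infty$ sense) with the inverse of the above map exhibits $C$ as quasi-isomorphic to a Koszul $A_\infty$-coalgebra.

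The main technical point, and the only step that is not purely formal, is the invocation that $B$ sends quasi-isomorphisms of dg algebras to quasi-isomorphisms of dg coalgebras and that the unit $C\to B\Omega C$ is an $A_\infty$-quasi-isomorphism; these are standard facts about the bar–cobar adjunction for $A_\infty$-(co)algebras, and in the setting of a field (which is the setting in which the dual Theorem \ref{thm:koszul} was stated) they follow from the results in \cite{Lefevre-Hasegawa}. Beyond this, the argument is a mechanical dualization of Theorem \ref{thm:koszul}.
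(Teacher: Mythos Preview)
Your proposal is correct and follows exactly the approach the paper intends: the paper does not write out a proof for this statement but merely declares it the ``obvious dual'' of Theorem \ref{thm:koszul}, and what you have written is precisely that dualization spelled out in full, invoking Proposition \ref{prop:bar koszul} in place of Proposition \ref{prop:gc} and the unit $C\to B\Omega C$ in place of the counit $\Omega BA\to A$.
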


%

We now give some examples of Koszul $A_\infty$-(co)algebras.

\begin{example}
\begin{itemize}
\item As remarked earlier, every quadratic Koszul algebra is a Koszul $A_\infty$-algebra.

\item As shown above, the bar construction $BA$ of a graded algebra $A$ is a Koszul dg coalgebra. The presentation of the Koszul dual algebra $BA^!$ described in Remark \ref{rem:presentation} gives the `multiplication table' presentation of $A$; $BA^! = T(A)/(a\tensor b - a\cdot b| a,b\in A)$.

\item The Chevalley-Eilenberg complex $C_*(\gl) = (\Lambda^* \gl,d_{CE})$ of a Lie algebra $\gl$, with weight grading given by $C_*(\gl)(k) = \Lambda^k \gl$, is a Koszul dg coalgebra. The Koszul dual algebra $C_*(\gl)^!$ is isomorphic to the universal enveloping algebra $U\gl$, and the presentation from Remark \ref{rem:presentation} agrees with the standard presentation $U\gl = T(\gl)/(x\tensor y - y\tensor x - [x,y]| x,y\in \gl)$.

\item For a non-homogeneous Koszul algebra $A$, in the sense of Priddy, the co-Koszul complex (see \cite[\S4]{Priddy}) is a Koszul dg algebra, whose Koszul dual algebra is $A$.

\item In \cites{HeLu,DotsenkoVallette}, a notion of Koszul duality for associative algebras with relations generated by $R\subseteq V^{\otimes N}$ is discussed. The Koszul dual is defined there as an $A_\infty$-algebra with $m_2$ and $m_N$ as the only non-vanishing structure maps. With our definition of Koszul $A_\infty$-algebra, this $A_\infty$-algebra will be Koszul.
\end{itemize}

\end{example}

In later sections, we will see further non-trivial examples of Koszul $A_\infty$-algebras that are not equivalent to ordinary Koszul algebras.

\subsection{Twisting morphisms and twisted tensor products} \label{sec:twisted}
Let $C$ be a dg coalgebra and $A$ an $A_\infty$-algebra. The graded $\kk$-module $\Hom(C,A)$ admits an $A_\infty$-algebra structure with
$$\mu_1(f) = d_A\circ f - (-1)^{|f|} f\circ d_C,$$
$$\mu_n\big(f_1,\dots,f_n\big)=m_n\circ(f_1\otimes\dots\otimes f_n)\circ\Delta^{(n)}, \quad n\geq 2,$$
for $f,f_1,\ldots,f_n\in \Hom(C,A)$. Here $\{m_n\}$ denotes the $A_\infty$-structure on $A$ and $\Delta^{(n)}\colon C \to C^{\tensor n}$ the iterated coproduct on $C$. The graded vector space $\Hom(C,A)$ together with this $A_\infty$-structure will be referred to as the \emph{convolution $A_\infty$-algebra}.

Similarly, if $C$ is an $A_\infty$-coalgebra and $A$ is a dg algebra, then $\Hom(C,A)$ admits an $A_\infty$-algebra structure where
$$\mu_1(f) = d_A\circ f - (-1)^{|f|} f\circ d_C,$$
$$\mu_n\big(f_1,\dots,f_n\big)=m^{(n)}\circ(f_1\otimes\dots\otimes f_n)\circ\Delta_n, \quad n\geq 2,$$
where $\{\Delta_n\}$ is the $A_\infty$-coalgebra structure on $C$ and $m^{(n)}\colon A^{\tensor n} \to A$ denotes the iterated product on $A$.


\begin{definition} \label{def:twisting morphism}
A \emph{twisting morphism} is a map $\tau\colon C \to A$ of degree $-1$ such that
$$\sum_{n\geq 1} \mu_n(\tau,\dots,\tau)=0.$$
\end{definition}

For an $A_\infty$-algebra $A$, there is a twisting morphism $\tau_A\colon BA \to A$, called the \emph{universal twisting morphism}, given by the composite
$$BA \to sA \to A,$$
where the first map is the projection and the second the desuspension.

Similarly, for an $A_\infty$-coalgebra $C$, there is a twisting morphism $\tau_C \colon C\to \Omega C$, also called the universal twisting morphism, given by the composite
$$C\to s^{-1}C \to \Omega C,$$
where the first map is the desuspension and the second the inclusion of generators.

\begin{definition}
\begin{enumerate}
\item Let $A$ be a negatively weight graded $A_\infty$-algebra with Koszul dual coalgebra $A^\as$. The twisting morphism
$$\kappa_A\colon A^\as \to A$$
defined as the composite $A^\as \to BA \xrightarrow{\tau_A} A$ will be called the \emph{canonical twisting morphism} associated to $A$.
\item Let $C$ be a weight graded $A_\infty$-coalgebra with Koszul dual algebra $C^!$. The twisting morphism
$$\kappa_C \colon C \to C^!$$
defined as the composite $C\xrightarrow{\tau_C} \Omega C \to C^!$ will be called the \emph{canonical twisting morphism} associated to $C$.
\end{enumerate}
\end{definition}

\begin{definition}
Let $\tau\in \Hom(C,A)_{-1}$ be a twisting morphism. The \emph{twisted tensor product} $C\otimes_\tau A$ is the usual tensor product chain complex with the term $$\sum_{k\geq 2} (1 \otimes m_k)\circ(1\otimes \tau^{\otimes (k-1)}\otimes 1) \circ (\Delta^k \otimes 1)$$
added to the differential.
\end{definition}

\begin{remark}
In the following theorem and in our topological applications we need to consider all constructions with (co)units in the appropriate way. Thus, in the remainder of this article we will assume that all $A_\infty$-(co)algebras are considered with (co)units. We will also assume that (co)bar constructions, twisted tensor products, Hochschild cohomology and other related constructions are all (co)unital, even when not explicitly stated as such.
\end{remark}

\begin{theorem}
\label{thm:twistedtensorproductkoszul}
\begin{enumerate}
\item A negatively weight graded connected $A_\infty$-algebra $A$ is Koszul if and only if the twisted tensor product
$$A^\as\tensor_{\kappa_A} A$$
is contractible, where $\kappa_A\colon A^\as \to A$ is the canonical twisting morphism.

\item A positively weight graded connected $A_\infty$-coalgebra $C$ is Koszul if and only if the twisted tensor product
$$C\tensor_{\kappa_C} C^!$$
is contractible, where $\kappa_C\colon C\to C^!$ is the canonical twisting morphism.
\end{enumerate}
\end{theorem}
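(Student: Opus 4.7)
The plan is to reduce both parts of the theorem to the following $A_\infty$-generalization of the classical Koszul criterion for twisting morphisms: for a twisting morphism $\alpha\colon C\to A$ from a conilpotent dg coalgebra to a connected $A_\infty$-algebra, the twisted tensor product $C\otimes_\alpha A$ is contractible if and only if the adjoint dg coalgebra map $f_\alpha\colon C\to BA$ is a quasi-isomorphism. Applied to $\alpha=\kappa_A$, whose adjoint is by construction the inclusion $\iota\colon A^\as\hookrightarrow BA$, this yields part~(1) immediately, since $A$ is Koszul precisely when $\iota$ is a quasi-isomorphism. Part~(2) is the dual statement, proved by the dual criterion applied to $\kappa_C\colon C\to C^!$, whose adjoint is the quotient algebra map $\Omega C\twoheadrightarrow C^!$.

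I would prove the Koszul criterion in three steps. First, show that the universal bar resolution $BA\otimes_{\tau_A}A$ is always contractible; the contracting homotopy is provided by the unit of $A$ via the extra-degeneracy argument. Second, since $\alpha=\tau_A\circ f_\alpha$, the coalgebra map $f_\alpha$ induces a chain map
$$f_\alpha\otimes\id_A\colon C\otimes_\alpha A\to BA\otimes_{\tau_A}A,$$
so contractibility of the source is equivalent to $f_\alpha\otimes\id_A$ being a quasi-isomorphism. Third, show that $f_\alpha\otimes\id_A$ is a quasi-isomorphism if and only if $f_\alpha$ is. For the ``if'' direction, filter both sides by the tensor length on the coalgebra factor; the $A_\infty$ twist terms strictly decrease this filtration, so that $f_\alpha\otimes\id_A$ induces a morphism of filtered complexes whose $E^1$-page map is essentially $f_\alpha$ tensored with $H_\ast A$ in each length, hence a quasi-isomorphism when $f_\alpha$ is. Convergence of the spectral sequences is ensured by conilpotence of $C$ and connectedness of $A$.

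The main obstacle is the ``only if'' direction of this last comparison: recovering the quasi-isomorphism $f_\alpha$ from the information that $f_\alpha\otimes\id_A$ is a quasi-isomorphism. My approach is to use the short exact sequence
$$0\to C\otimes_\alpha A\to BA\otimes_{\tau_A}A\to (BA/f_\alpha C)\otimes A\to 0,$$
where the quotient carries the induced twisted differential, together with the long exact sequence in homology, to reduce the claim to showing that acyclicity of the twisted quotient $(BA/f_\alpha C)\otimes A$ implies acyclicity of $BA/f_\alpha C$ itself. Connectedness of $A$ is critical here: since every twist term lands in weights $\leq -1$ of $A$, the augmentation $1\otimes\epsilon$ descends to a chain map from the twisted quotient to $BA/f_\alpha C$, and a weight-by-weight analysis of the spectral sequence for the $A$-weight filtration, anchored at $A(0)=\kk$, yields the desired acyclicity. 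The care required to manage the $A_\infty$ higher operations $m_k$ throughout this filtration is the technical heart of the proof.
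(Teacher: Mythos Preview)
Your approach is essentially the same as the paper's: both reduce to the spectral-sequence comparison of Loday--Vallette \S2.5, comparing $A^{\as}\otimes_{\kappa_A}A$ with the acyclic $BA\otimes_{\tau_A}A$ via the map induced by the inclusion $A^{\as}\hookrightarrow BA$. The paper's proof is a one-line pointer to that argument, filtering by homological degree on the coalgebra factor; you instead filter by tensor length for the forward direction and by $A$-weight for the converse. These are legitimate variants of the same idea and both work under the connectedness hypothesis.

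Two small points where your sketch should be tightened. First, your claim that the $E^1$-page map is ``$f_\alpha$ tensored with $H_*A$'' is only literally correct when $A$ is minimal: for general $A$ the component $b_0$ of the bar differential preserves tensor length, so the associated graded on the $BA$ side has $E^1\cong T^c(sH_*A)\otimes H_*A$ rather than $BA\otimes H_*A$. This is harmless---either pass to a minimal model first, or use the homological-degree filtration as the paper does, which sidesteps the issue. Second, in your converse argument the step ``acyclicity of $(BA/f_\alpha C)\otimes A$ implies acyclicity of $BA/f_\alpha C$, anchored at $A(0)=\kk$'' is correct but deserves one more sentence: with the $A$-weight filtration one has $E^1_{0,*}=H_*(BA/A^{\as})$ with no incoming differentials, and the weight--degree inequality forced by connectedness (elements of $A(-r)$ have homological degree at least $r$) leaves no target for an outgoing differential from a lowest-degree nonzero class, so such a class would survive to $E^\infty$, contradicting acyclicity.
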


\begin{proof}
The theorem follows by an adaptation of the spectral sequence argument in \cite[\S2.5]{LodayVallette}. 
In the case of (1) this is a comparison of spectral sequences obtained from filtrations of $BA\otimes_{\tau_A}A$ and $A^\as\tensor_{\kappa_A} A$. These filtrations are coming from filtrations by homological degree on $BA$ and $A^\as.$

\end{proof}

\subsection{Applications to topological spaces}
Let $\kk$ be a field. Recall that a based topological space $X$ is called \emph{formal over $\kk$} if the singular chain complex $C_*(X;\kk)$ is quasi-isomorphic, as a dg coalgebra, to the homology coalgebra $H_*(X;\kk)$. Dually, the space $X$ is called \emph{coformal over $\kk$} if the singular chains on the based loop space $C_*(\Omega X;\kk)$ is quasi-isomorphic, as a dg algebra, to the homology algebra $H_*(\Omega X;\kk)$. By applying the homotopy transfer theorem to the dg coalgebra $C_*(X;\kk)$ one obtains a minimal $A_\infty$-coalgebra structure on $H_*(X;\kk)$, where the binary coproduct is the ordinary coproduct in homology. Similarly, the homology $H_*(\Omega X;\kk)$ is endowed with a minimal $A_\infty$-algebra structure where $m_2$ is the Pontryagin product. Theorem \ref{thm:koszul} allows us to interpret formality and coformality in terms of Koszulness of these $A_\infty$-structures.

\begin{theorem}
\label{thm:formalkoszul}
Let $X$ be a path connected based topological space and let $\kk$ be a field. The following are equivalent:
\begin{enumerate}
\item The space $X$ is formal over $\kk$.
\item $H_*(\Omega X;\kk)$ is a Koszul $A_\infty$-algebra.
\end{enumerate}
In this situation, the homology coalgebra $H_*(X;\kk)$ is isomorphic to the Koszul dual coalgebra of $H_*(\Omega X;\kk)$.
\end{theorem}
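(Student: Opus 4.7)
The plan is to bridge the coalgebra formulation of formality of $X$ with the $A_\infty$-algebra condition on $\Omega X$ by means of the classical Adams-type equivalence, and then to invoke Theorem~\ref{thm:koszul}.

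First I would recall the natural quasi-isomorphism of dg coalgebras
$$BC_*(\Omega X;\kk) \xrightarrow{\simeq} C_*(X;\kk),$$
valid for path connected $X$ (Adams in the simply connected case, Rivera--Zeinalian with the extended cobar in general). Since $H_*(X;\kk)$ carries trivial differential, $X$ is formal over $\kk$ precisely when $C_*(X;\kk)$ is formal as a dg coalgebra, which by the displayed quasi-isomorphism is in turn equivalent to $BC_*(\Omega X;\kk)$ being formal as a dg coalgebra.

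Applying Theorem~\ref{thm:koszul} to $A = C_*(\Omega X;\kk)$, the latter condition is equivalent to $C_*(\Omega X;\kk)$ being quasi-isomorphic to a Koszul $A_\infty$-algebra. To upgrade this to statement (2) of the theorem, namely that the minimal $A_\infty$-algebra $H_*(\Omega X;\kk)$ is itself Koszul, I would combine Theorem~\ref{thm:minimal} with the uniqueness (over a field) of minimal models in a quasi-isomorphism class of $A_\infty$-algebras: if $K$ is Koszul with $K\simeq C_*(\Omega X;\kk)$, then Theorem~\ref{thm:minimal} transports the Koszul weight grading to a minimal Koszul $A_\infty$-structure on $H_*K$, and uniqueness identifies this $A_\infty$-isomorphically with the standard transferred structure on $H_*(\Omega X;\kk)$, which therefore admits a Koszul weight grading. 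The converse implication is immediate, since $H_*(\Omega X;\kk)\simeq C_*(\Omega X;\kk)$ as $A_\infty$-algebras.

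For the final identification of the Koszul dual coalgebra I would chase the equalities
$$H_*(\Omega X;\kk)^{\as} = H_*\bigl(BH_*(\Omega X;\kk)\bigr)(0) = H_*\bigl(BH_*(\Omega X;\kk)\bigr),$$
where the second step uses that the weight grading is Koszul, so that the bar homology is concentrated in weight zero. Composing with the chain of dg coalgebra quasi-isomorphisms $BH_*(\Omega X;\kk)\simeq BC_*(\Omega X;\kk)\simeq C_*(X;\kk)$ yields the desired isomorphism $H_*(\Omega X;\kk)^{\as}\cong H_*(X;\kk)$ of graded coalgebras. The main obstacle I anticipate is the upgrade step from ``quasi-isomorphic to a Koszul $A_\infty$-algebra'' to ``the transferred minimal structure on homology is Koszul''; this rests on Theorem~\ref{thm:minimal} (whose splittability hypothesis is automatic over a field) together with uniqueness of minimal $A_\infty$-models, both standard but crucial inputs. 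The Adams-type coalgebra equivalence used at the outset is the other essential classical ingredient.
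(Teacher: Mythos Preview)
Your proof is correct and follows the same approach as the paper, which simply invokes Theorem~\ref{koszulequivalencetheorem} together with the Adams-type quasi-isomorphism $BC_*(\Omega X;\kk)\simeq C_*(X;\kk)$ of dg coalgebras. You are more explicit than the paper about the upgrade from ``quasi-isomorphic to a Koszul $A_\infty$-algebra'' to ``$H_*(\Omega X;\kk)$ is itself Koszul'' (via Theorem~\ref{thm:minimal} and uniqueness of minimal models over a field) and about the final identification $H_*(\Omega X;\kk)^{\as}\cong H_*(X;\kk)$; the paper leaves both of these implicit.
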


\begin{proof}
This follows from Theorem \ref{koszulequivalencetheorem} and the fact that $BC_*(\Omega X;\kk)$ is quasi-isomorphic to $C_*(X;\kk)$ as a dg coalgebra, see 
Theorem 6.3 of \cite{FelixHalperinThomas}.
\end{proof}

\begin{remark}
When $X$ is formal over $\kk$, the Koszul weight grading on $H_*(\Omega X;\kk)$ corresponds to the `lower gradation' of the non-commutative bigraded minimal model for the cohomology ring $H^*(X;\kk)$, cf.
~\cite{HalperinStasheff}.
\end{remark}

Dually, we have the following theorem.
\begin{theorem}
\label{thm:coformalkoszul}
Let $X$ be a simply connected space and let $\kk$ be a field. The following are equivalent:
\begin{enumerate}
\item The space $X$ is coformal over $\kk$.
\item $H_*(X;\kk)$ is a Koszul $A_\infty$-coalgebra.
\end{enumerate}
In this situation, the Pontryagin algebra $H_*(\Omega X;\kk)$ is isomorphic to the Koszul dual algebra of $H_*(X;\kk)$.
\end{theorem}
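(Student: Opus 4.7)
The plan is to mirror the proof of Theorem \ref{thm:formalkoszul}, substituting the dual of Theorem \ref{koszulequivalencetheorem} (the unnumbered $A_\infty$-coalgebra version stated just above) in place of Theorem \ref{koszulequivalencetheorem}, and using Adams's cobar theorem in place of the bar-chain equivalence used there. The essential topological input is that, for simply connected $X$, there is a natural quasi-isomorphism of dg algebras $\Omega C_*(X;\kk) \simeq C_*(\Omega X;\kk)$; this is the only place where simple connectedness enters.

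First, since $\kk$ is a field, the homotopy transfer theorem produces on $H_*(X;\kk)$ a minimal $A_\infty$-coalgebra structure together with an $A_\infty$-quasi-isomorphism $H_*(X;\kk) \simeq C_*(X;\kk)$. Applying $\Omega$ gives a quasi-isomorphism of dg algebras $\Omega H_*(X;\kk) \simeq \Omega C_*(X;\kk)$, and combining with Adams we obtain $\Omega H_*(X;\kk) \simeq C_*(\Omega X;\kk)$ as dg algebras. With this bridge set up, the chain of equivalences runs:
\begin{equation*}
X \text{ coformal over } \kk
\iff C_*(\Omega X;\kk) \text{ formal}
\iff \Omega C_*(X;\kk) \text{ formal}
\iff H_*(X;\kk) \text{ is Koszul},
\end{equation*}
where the last equivalence is the dual of Theorem \ref{koszulequivalencetheorem} applied to $C_*(X;\kk)$, together with the fact that the Koszul property is invariant under $A_\infty$-quasi-isomorphism so it transfers between $C_*(X;\kk)$ and $H_*(X;\kk)$.

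For the identification of the Pontryagin algebra with the Koszul dual algebra, suppose $H_*(X;\kk)$ is Koszul. By definition, the canonical projection $\Omega H_*(X;\kk) \to H_*(X;\kk)^!$ is a quasi-isomorphism of dg algebras, and $H_*(X;\kk)^!$ carries the zero differential. Composing with $\Omega H_*(X;\kk) \simeq C_*(\Omega X;\kk)$ and passing to homology yields the desired isomorphism of graded algebras $H_*(\Omega X;\kk) \cong H_*(X;\kk)^!$.

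The main obstacle is essentially bookkeeping rather than content: one must work in the coaugmented/augmented setting flagged in the remark so that Adams's theorem applies verbatim and so that the transferred $A_\infty$-coalgebra structure on $H_*(X;\kk)$ is coaugmented (and hence has a well-defined cobar construction). Provided the conventions for (co)units and reduced chains are fixed at the outset, all the intermediate quasi-isomorphisms above are standard and the argument reduces to invoking the tools already developed in the preceding sections.
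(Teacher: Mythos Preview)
Your proof is correct and follows essentially the same route as the paper's: invoke the dual of Theorem~\ref{koszulequivalencetheorem} together with Adams's quasi-isomorphism $\Omega C_*(X;\kk)\simeq C_*(\Omega X;\kk)$. The paper's proof is a one-liner citing exactly these two ingredients; you have simply unpacked the chain of equivalences and added the identification of the Pontryagin ring with the Koszul dual, which the paper leaves implicit. One small remark: your phrase ``the Koszul property is invariant under $A_\infty$-quasi-isomorphism'' is a slight overstatement as written (a dg coalgebra quasi-isomorphic to a Koszul one need not itself carry a Koszul weight grading); what you actually need, and what the dual of Theorem~\ref{thm:minimal} together with uniqueness of minimal models provides, is that the \emph{minimal} model can be taken to be Koszul---but the paper is equally informal on this point.
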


\begin{proof}
This follows from the obvious dual version of Theorem \ref{koszulequivalencetheorem} together with the well-known fact that $\Omega C_*(X;\kk)$ is quasi-isomorphic to $C_*(\Omega X;\kk)$ as a dg algebra.
\end{proof}

\begin{remark}
\label{remark:PID}
The results in this section can be extended to the case when $\kk$ is a PID if one assumes that $H_*(X;\kk)$ and $H_*(\Omega X;\kk)$ are free $\kk$-modules. 
\end{remark}

\section{Hochschild cohomology and obstructions to formality}
In this section, we will use the notion of Koszulness for $A_\infty$-algebras to write down small chain complexes for computing Hochschild cohomology, generalizing the results of 
\cite{BerglundBorjeson}. We also discuss weight gradings on Hochschild cohomology and obstructions for formality and coformality.

\subsection{Hochschild cochains and twisted convolution algebras}

Given a convolution algebra $\Hom(C,A),$ and a twisting morphism $\tau:C\rightarrow A,$ we can define a new $A_\infty$-structure $\{\mu_n^\tau\}$ on $\Hom(C,A)$ by
$$\mu_n^\tau(f_1,\ldots,f_n) = \sum_{i\geq 0} \mu_{n+i}((\tau^{\tensor i}*f_1\tensor\ldots\tensor f_n)),$$
where $*$ denotes the anti-symmetric shuffle product, and where $\{\mu_n\}$ is the convolution $A_\infty$-algebra structure on $\Hom(C,A)$ described in \S\ref{sec:twisted}.
The first maps are given by, 
\begin{align*}
\mu_1^\tau(f) = & \mu_1(f) +  \mu_2(\tau,f) + (-1)^{|sf|}\mu_2(f,\tau) \\
&  + \mu_3(\tau,\tau,f) +(-1)^{|sf|}\mu_3(\tau,f,\tau)  + \mu_3(f,\tau,\tau) + \cdots,
\end{align*}
and
\begin{align*}
\mu_2^\tau(f,g) = & \mu_2(f,g) + \mu_3(\tau,f,g) + (-1)^{|sf|}\mu_3(f,\tau,g) + (-1)^{|sf|+|sg|}\mu_3(f,g,\tau) \\
& + \mu_4(\tau,\tau,f,g) + (-1)^{|sf|}\mu_4(\tau,f,\tau,g) + \cdots.
\end{align*}

The Hochschild cohomology complex can be defined as a convolution algebra twisted by the universal twisting morphism as follows. 

\begin{definition}
Let $A$ be a weight graded $A_\infty$-algebra. The Hochschild cohomology complex $C^*(A,A)$ is the weight graded $A_\infty$-algebra defined by $\Hom^{\tau_A}(BA,A),$ the convolution algebra twisted with the universal twisting morphism $\tau_A.$ 
Dually, the Hochschild cohomology complex $C^*(C,C)$ of a weight graded $A_\infty$-coalgebra $C$ is defined as the weight graded $A_\infty$-algebra $\Hom^{\tau_C}(C,\Omega C).$ 
\end{definition}

This point of view enables us, in the case of Koszul $A_\infty$-(co)algebras, to construct twisted convolution algebras that are smaller than the Hochschild cohomology complex but have the same homology.

\begin{theorem} \label{thm:koszulhochschild}
\begin{enumerate}
\item Let $A$ be a Koszul $A_\infty$-algebra with Koszul dual coalgebra $A^\as$ and canonical twisting morphism $\kappa_A\colon A^\as \to A$. There are quasi-isomorphisms of weight graded $A_\infty$-algebras $$\Hom^{\kappa_A}(A^\as,A)\sim C^*(A,A)\sim C^*(A^\as,A^\as).$$

\item Let $C$ be a Koszul $A_\infty$-coalgebra with Koszul dual algebra $C^!$ and let $\kappa_C\colon C\to C^!$ be the canonical twisting morphism. There are quasi-isomorphisms of weight graded $A_\infty$-algebras $$\Hom^{\kappa_C}(C,C^!)\sim C^*(C,C)\sim C^*(C^!,C^!).$$
\end{enumerate}
\end{theorem}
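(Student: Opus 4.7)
My plan is to handle the two quasi-isomorphisms in part (1) separately; part (2) will follow by the same arguments with the bar and cobar constructions interchanged.

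For the first, $\Hom^{\kappa_A}(A^\as, A) \sim C^*(A, A)$: the Koszul hypothesis gives that the inclusion $\iota\colon A^\as \hookrightarrow BA$ is a quasi-isomorphism of dg coalgebras, and by construction $\kappa_A = \tau_A \circ \iota$. Because the formulas defining the twisted convolution $A_\infty$-structure are natural in the dg coalgebra slot when the new twisting morphism is obtained by pullback, precomposition with $\iota$ defines a strict morphism of weight graded $A_\infty$-algebras
\[\iota^* \colon \Hom^{\tau_A}(BA, A) \longrightarrow \Hom^{\kappa_A}(A^\as, A).\]
To show $\iota^*$ is a quasi-isomorphism I would adapt the spectral sequence argument from the proof of Theorem \ref{thm:twistedtensorproductkoszul}. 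Filter both complexes by the homological-degree component of the weight grading on the coalgebra slot; because $\tau_A$ has weight $-1$, the twisted correction terms $\mu_{n+i}(\tau_A^{\tensor i},-)$ appearing in $\mu_1^{\tau_A}$ lower filtration degree, so on the $E_0$-page the differential reduces to the untwisted Hom-complex differential. The induced map on $E_0$ is precomposition by the chain map $\iota$, which is a quasi-isomorphism by hypothesis; the negativity of the weight grading on $A$ ensures the filtrations are bounded and the comparison of spectral sequences converges.

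For the second, $C^*(A, A) \sim C^*(A^\as, A^\as)$: under the cobar--bar adjunction the twisting morphism $\kappa_A\colon A^\as \to A$ corresponds to a morphism of dg algebras $\phi\colon \Omega A^\as \to A$, and $\phi$ is a quasi-isomorphism because $\iota$ is. Postcomposition with $\phi$ carries $\tau_{A^\as}$ to $\phi\circ\tau_{A^\as}=\kappa_A$ and therefore induces a strict morphism of weight graded $A_\infty$-algebras
\[\phi_* \colon C^*(A^\as, A^\as) = \Hom^{\tau_{A^\as}}(A^\as, \Omega A^\as) \longrightarrow \Hom^{\kappa_A}(A^\as, A).\]
The same spectral sequence argument, now filtering by weight on $A^\as$ and using that $\phi$ is a quasi-isomorphism, shows $\phi_*$ is a quasi-isomorphism. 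Combined with the first quasi-isomorphism this yields $C^*(A^\as, A^\as) \sim \Hom^{\kappa_A}(A^\as, A) \sim C^*(A, A)$.

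The main obstacle I anticipate is verifying that $\iota^*$ and $\phi_*$ are \emph{strict} morphisms of weight graded $A_\infty$-algebras, not merely $\infty$-morphisms; this amounts to checking that restriction and postcomposition commute on the nose with every twisted operation $\mu_n^\tau$, which is a careful but direct bookkeeping exercise. The spectral sequence comparisons themselves should go through cleanly once the filtrations are set up, with the negativity of the weight grading on $A$ and the positivity of the induced grading on $A^\as$ guaranteeing boundedness and convergence.
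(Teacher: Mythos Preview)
Your argument for the first quasi-isomorphism identifies the same map as the paper (precomposition with $\iota\colon A^\as\hookrightarrow BA$, which is indeed a strict morphism of the twisted convolution $A_\infty$-algebras because $\iota$ is a dg coalgebra map and $\kappa_A=\tau_A\circ\iota$), but proves it is a quasi-isomorphism by a different route. The paper does not run a spectral sequence here: it extends $\iota$ to a contraction of chain complexes, applies $\Hom(-,A)$, and then invokes the basic perturbation lemma with initiator $t=\sum_{k\geq 1}\mu_{k+1}(\tau_A^{\tensor k}*(-))$. Because $f^*$ is strict and $f^*h^*=0$, the perturbed projection collapses to $f^*=\iota^*$ and the transferred differential is precisely the $\kappa_A$-twisted one. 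This yields a genuine homotopy retract, so no filtration or convergence analysis is needed. Your spectral sequence alternative is plausible, but the filtration you name is imprecise (the relevant one is by bar length, equivalently the coradical filtration) and you would still owe a careful $E_1$-identification.

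There is, however, a genuine gap in your argument for the second quasi-isomorphism. You assert that $\kappa_A$ corresponds under cobar--bar to a morphism of \emph{dg algebras} $\phi\colon\Omega A^\as\to A$. This is only correct when $A$ itself is a dg algebra. For a Koszul $A_\infty$-algebra with nontrivial $m_n$, $n\geq 3$, the adjoint of $\kappa_A$ is an $A_\infty$-morphism $\{\phi_n\}\colon\Omega A^\as\to A$, and there is no reason for the higher components to vanish; indeed a strict map would force every $m_n$, $n\geq 3$, to vanish on the image of $\phi_1$. Consequently ``postcomposition with $\phi$'' is not a strict map between the twisted convolution algebras, and the identity $\phi\circ\tau_{A^\as}=\kappa_A$ has no literal meaning. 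Your anticipated ``bookkeeping exercise'' therefore cannot succeed for $\phi_*$ as stated. The paper avoids this issue entirely: for $\Hom^{\kappa_A}(A^\as,A)\sim C^*(A^\as,A^\as)$ it does not construct a map in the algebra slot but instead filters $A^\as$ by its coradical filtration
\[
F_0(A^\as)=\kk 1,\qquad F_r(A^\as)=\{x\in A^\as\mid \Delta(x)-1\tensor x-x\tensor 1\in F_{r-1}(A^\as)\tensor F_{r-1}(A^\as)\}
\]
and runs a spectral sequence comparison directly. If you want to repair your route, one option is to replace $A$ by the dg algebra $\Omega A^\as$ (to which it is $A_\infty$-quasi-isomorphic when $A$ is Koszul) before invoking $\phi$, at the cost of an extra invariance step.
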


\begin{proof}
We will prove the first statement, the proof of the second one is analoguous.

The proof relies on the basic perturbation lemma and we assume that the reader is familiar with it, for an introduction, see e.g. \cite{Berglund}.
Consider the injective quasi-isomorphism $f: A^\as \to BA$ that exists since $A$ is a Koszul $A_\infty$-algebra. It is possible to extend $f$ to a contraction of chain complexes,
$$\bigSDR{\big(BA,d_{BA}\big)}{\big(A^\as,0\big)}{g}{f}{h}.$$
We apply the dg-functor $\Hom(-,A)$ and obtain a new contraction.
$$\bigbigSDR{\big(\Hom(BA,A),\partial\big)}{\big(\Hom(A^\as,A),0\big)}{f^*}{g^*}{h^*}$$
Since $f$ is a morphism of coalgebras, one sees that $f^*$ is a strict morphism of $A_\infty$-algebras. 
Consider the initiator $$t=\sum_{k\geq 1} \mu_{k+1}((\tau_A,\dots,\tau_A)*(-))$$ where $\tau_A\colon BA\to A$ is the universal twisting morphism.
This choice means that $\Hom(BA,A)$ perturbed with $t$ is isomorphic to $\Hom^{\tau_A}(BA,A).$
If we apply the basic perturbation lemma with $t$ as initiator, we obtain a new contraction
$$\bigbigSDR{\big(\Hom(BA,A),\partial + t\big)}{\big(\Hom(A^\as,A),t'\big)}{f'}{g'}{h'}.$$
To see that the sum $\sum_{n\geq 0}(h^*t)^n$ converges, we use the fact that $A$ carries a weight-grading. The chain complex $\Hom(BA,A)$ inherits a filtration from this grading, where $t$ decreases 
the filtration degree and $h^*$ preserves it. It follows that $\sum_{n\geq 0}(h^*t)^n$ converges point-wise. Since $f^*$ is a strict $A_\infty$-morphism, we can simplify the formulas for $f'$ and $t'.$ 
Indeed, $f'$ is given by $f' = f^*+f^*th^*+f^*th^*th^*+\dots$. We note that $$f^*th^*=f^*\sum_{k\geq 1} \mu_{k+1}((\tau_A,\dots,\tau_A)*(h^*))=\sum_{k\geq 1} \mu_{k+1}((f^*\tau_A,\dots,f^*\tau_A)*(f^*h^*))=0,$$ since $f^*$ is a strict morphism and $f^*h^*=0.$ Thus, we see that $f'=f^*$. 
Similarly, $t'=f^*tg^*+f^*th^*tg^*+\dots$, where $$f^*tg^*=f^*\sum_{k\geq 1} \mu_{k+1}((\tau_A,\dots,\tau_A)*(g^*))=\sum_{k\geq 1} \mu_{k+1}((f^*\tau_A,\dots,f^*\tau_A)*(f^*g^*))=$$ $$=\sum_{k\geq 1} \mu_{k+1}((\kappa_A,\dots,\kappa_A)*(-)),$$ which is the differential on $\Hom^{\kappa_A}(A^\as,A).$ The higher terms all vanish in the same way as above, so we may identify $\big(\Hom(A^\as,A),t'\big)$ with $\Hom^{\kappa_A}(A^\as,A).$ Thus, we see that $f'=f^*$ is a strict $A_\infty$-quasi-isomorphism $\Hom^{\tau_A}(BA,A)\rightarrow \Hom^{\kappa_A}(A^\as,A).$ 

Thus, we have proved the first part of (1), that $\Hom^{\kappa_A}(A^\as,A)\sim C^*(A,A)$ and we only need to prove that $\Hom^{\kappa_A}(A^\as,A)\sim C^*(A^\as,A^\as).$ This can be done via a spectral sequence argument. The key is to consider the following inductively defined filtration of $A^\as.$ 
$$F_0(A^\as)=\kk1$$
$$F_r(A^\as)= \{x\in A^\as | \Delta(x) - 1\otimes x - x \otimes 1 \in F_{r-1}(A^\as)\otimes F_{r-1}(A^\as) \}$$
By analyzing the resulting spectral sequence we see that we indeed obtain a quasi-isomorphism.

\end{proof}

\subsection{Obstructions to formality}


\begin{theorem}
Suppose that $A$ is a dg algebra over $\kk$ such that $A$ is split as a chain complex of $\kk$-modules.
There is a sequence of `obstruction classes'
$$[m_k] \in HH^2(H_*A,H_*A)(-k),\quad k\geq 3,$$
where $[m_k]$ is defined if the previous classes $[m_3],\ldots,[m_{k-1}]$ vanish. If $[m_k] =0$ all $k\geq 3$, then the dg algebra $A$ is formal.
\end{theorem}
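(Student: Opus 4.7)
The plan is to combine the homotopy transfer theorem with the standard obstruction theory for progressively destroying higher $A_\infty$-operations via $A_\infty$-isomorphisms with identity linear term. Since $(A,d_A)$ is split, the homotopy transfer theorem produces a minimal $A_\infty$-algebra structure $\{m_n\}_{n\geq 2}$ on $H_*A$ together with an $A_\infty$-quasi-isomorphism $(H_*A,\{m_n\}) \to A$. Formality of the dg algebra $A$ is equivalent to this minimal model being $A_\infty$-quasi-isomorphic to the graded algebra $(H_*A,m_2)$, viewed as a minimal $A_\infty$-algebra with $m_n = 0$ for $n\geq 3$; this equivalence uses the standard fact that $A_\infty$-quasi-isomorphisms between dg algebras can be realized by zigzags of strict dg algebra quasi-isomorphisms via the bar-cobar adjunction.

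Next, I would construct the obstructions inductively. Assume that, after a finite sequence of $A_\infty$-isomorphisms with identity linear term, the transferred structure has been arranged so that $m_3 = \cdots = m_{k-1} = 0$. Writing out the $A_\infty$-relation in arity $k+1$ and substituting $m_1 = 0$ and $m_j = 0$ for $3\leq j\leq k-1$, all surviving terms involve only $m_2$ and $m_k$, and the relation collapses precisely to the Hochschild cocycle condition $\partial m_k = 0$, where $\partial$ is the Hochschild differential of the graded algebra $(H_*A,m_2)$. Since $m_k\colon (H_*A)^{\tensor k}\to H_*A$ has length $k$, this defines a class
$$[m_k]\in HH^2(H_*A,H_*A)(-k).$$
If $[m_k]=0$ and $\phi\colon (H_*A)^{\tensor (k-1)}\to H_*A$ is chosen with $\partial\phi = m_k$, then the $A_\infty$-isomorphism $\{f_n\}$ with $f_1=\id$, $f_j=0$ for $2\leq j\leq k-2$, and $f_{k-1}=\phi$ transforms $\{m_n\}$ into a new minimal structure $\{m'_n\}$ with $m'_3,\ldots,m'_k$ all zero, and the class $[m_{k+1}]$ is then defined as the Hochschild class of the resulting $m'_{k+1}$.

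Third, I would verify that $[m_{k+1}]$ does not depend on the choice of $\phi$ at the previous stage: two choices differ by a Hochschild 1-cocycle of length $k-1$, and the associated $A_\infty$-isomorphism alters $m'_{k+1}$ only by a Hochschild coboundary. Granted this, vanishing of every $[m_k]$ for $k\geq 3$ allows the construction to be iterated and the successive gauge transformations to be assembled into a single $A_\infty$-isomorphism $(H_*A,m_2,0,0,\ldots)\to (H_*A,\{m_n\})$. Composing with the transfer map to $A$ and applying the rectification procedure mentioned above yields the desired formality.

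The main obstacle is the well-definedness argument in the third paragraph: one must track how the freedom in the earlier cobounding cochains propagates through the inductive process and show that only the cohomology class at each stage is intrinsic. The cleanest formulation is to regard strict $A_\infty$-isomorphisms with identity linear term as a pro-unipotent gauge group acting on the set of minimal $A_\infty$-structures extending $m_2$, and to interpret the successive classes $[m_k]$ as the Maurer-Cartan obstructions in the deformation problem governed by the Hochschild complex of $(H_*A,m_2)$, where the weight grading $(-k)$ refines the usual cohomological grading.
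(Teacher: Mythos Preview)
Your proposal is correct and follows essentially the same approach as the paper: use the homotopy transfer theorem to produce a minimal $A_\infty$-structure on $H_*A$, interpret the resulting higher operations $m_k$ as Hochschild cochains of cohomological degree $2$ and weight $-k$, and then run the standard inductive obstruction theory to kill them one by one via $A_\infty$-isomorphisms with identity linear part. The paper's own proof is little more than a sketch that defers the obstruction-theoretic details to \cite{Saleh}; you have simply written out what that reference contains.

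One minor caution: your well-definedness claim in the third paragraph is slightly stronger than what is needed or than what is strictly true. In general the obstruction class at stage $k+1$ can depend on the full sequence of earlier cobounding choices, not only on the immediately preceding one, so the class $[m_{k+1}]$ is not canonically attached to $A$ alone. The theorem as stated only asserts that $[m_k]$ is \emph{defined} once the previous classes have been killed (i.e., once choices have been made), and your argument already delivers this. The conclusion about formality is unaffected.
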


\begin{proof}
Since $A$ is assumed split, we have a retract
$$\bigSDR{(A,m_1)}{(H_*A,0)}{f}{g}{h}.$$
Using the homotopy transfer theorem we obtain a minimal $A_\infty$-algebra structure on the homology, i.e., a sequence of maps
$$m_k\colon (H_*A)^{\otimes k}\rightarrow H_*A, \quad k\geq 2,$$
of degree $k-2$, and where $m_2$ is the usual product on homology. Considering $H_*A$ concentrated in weight $0,$ we may interpret $m_k$ as a Hochschild cochain of (total) cohomological degree $2$ and weight $-k$.
 If $[m_k]=0,$ for all $k\geq 3,$ the algebra $A$ is formal over $\kk.$

For more details about obstructions for formality via Hochschild cohomology, we refer to \cite{Saleh} and the references therein. 
In loc.\ cit.\ the results are stated in a more general operadic setting but over a field of characteristic $0$. For the associative operad we can relax this as long as we assume that the relevant modules are free.
\end{proof}

The following useful proposition will allow us to deduce integral formality from rational formality in favorable situations.

\begin{proposition} \label{prop:integral formality}
Let $A$ be a dg algebra over $\ZZ$ such that $A$ and $H_*A$ are free as $\ZZ$-modules with $H_*(A)$ considered in weight $0.$ Suppose that the obstruction group $HH^2(H_*A,H_*A)(-k)$ is torsion free for all $k\geq 3$. Then $A$ is formal over $\ZZ$ if and only if $A\tensor \QQ$ is formal over $\QQ$.
\end{proposition}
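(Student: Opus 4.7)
The plan is to use the obstruction theory from the previous theorem and deduce vanishing of integral obstructions from vanishing of rational ones via the torsion-free hypothesis. One direction is immediate: if $A$ is formal over $\ZZ$, then tensoring a zig-zag of dg-algebra quasi-isomorphisms $A \simeq H_*A$ with $\QQ$ gives a zig-zag $A\tensor\QQ\simeq H_*(A)\tensor \QQ = H_*(A\tensor\QQ)$, so $A\tensor\QQ$ is formal. The work is in the converse.

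First I would set up a contraction over $\ZZ$. Since $A$ and $H_*A$ are $\ZZ$-free and $\ZZ$ is a PID, $A$ is split in the sense of the paper, so there is a contraction
$$\bigSDR{(A,m_1)}{(H_*A,0)}{f}{g}{h}$$
of chain complexes of free $\ZZ$-modules. Tensoring with $\QQ$ gives a contraction of $A\tensor\QQ$ onto $H_*A\tensor \QQ = H_*(A\tensor\QQ)$. Applying the homotopy transfer theorem with these two compatible contractions produces minimal $A_\infty$-structures $\{m_k\}$ on $H_*A$ and $\{m_k\tensor\QQ\}$ on $H_*A\tensor\QQ$; explicitly the rational transferred structure is obtained from the integral one by tensoring with $\QQ$, since the transfer formulas involve only $f$, $g$, $h$, and the operations on $A$.

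Next I would compare the obstruction classes. Because $H_*A$ is $\ZZ$-free, the Hochschild cochain complex $C^*(H_*A,H_*A)$ is a complex of $\ZZ$-free modules and base change gives a canonical identification $C^*(H_*A,H_*A)\tensor\QQ \cong C^*(H_*A\tensor\QQ,H_*A\tensor\QQ)$ preserving the weight grading. By the universal coefficient theorem, the induced map
$$HH^2(H_*A,H_*A)(-k) \longrightarrow HH^2(H_*A\tensor\QQ,H_*A\tensor\QQ)(-k)$$
factors as the rationalization map followed by an isomorphism, so the torsion-free hypothesis makes it injective. Under this map, the rational obstruction class is the image of the integral one, because both are represented by $m_k$ computed by the same transfer formula.

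Finally, I would run the induction implicit in the previous theorem. Assume $A\tensor\QQ$ is formal. Since the rational obstruction classes all vanish (the transferred minimal model can be rectified to be trivial), I claim inductively that the integral obstructions $[m_k]$ vanish as well. For the base case $k=3$, $[m_3]$ is always defined and its image in $HH^2(H_*A\tensor\QQ,H_*A\tensor\QQ)(-3)$ is the rational obstruction, which vanishes; by injectivity, $[m_3]=0$. Vanishing of $[m_3]$ lets us modify the transferred $A_\infty$-structure (via an $A_\infty$-isomorphism, chosen $\ZZ$-integrally since the relevant obstruction lies in a torsion-free group) so that $m_3=0$ strictly; the same modification rationalizes to kill the rational $m_3$. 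This brings us into position to define $[m_4]$, and the argument continues. After all $[m_k]$ are killed we obtain a zero minimal $A_\infty$-model for $A$ over $\ZZ$, hence $A$ is formal over $\ZZ$. The main obstacle I anticipate is the bookkeeping showing that the integral modifications producing strict vanishing of $m_k$ remain compatible with the rationalization at every stage; this is handled by always choosing the primitive of the obstruction cocycle over $\ZZ$ (possible by torsion-freeness) and then tensoring.
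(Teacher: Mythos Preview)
Your argument is correct and follows essentially the same route as the paper: both use that the integral obstruction classes map to the rational ones under the canonical rationalization map on Hochschild cohomology, which is injective by the torsion-free hypothesis, and then conclude by the obstruction theory of the preceding theorem. You have simply spelled out in more detail the inductive bookkeeping (compatible contractions, modifying the $A_\infty$-structure integrally at each step, and rationalizing the modification) that the paper leaves implicit; note that your parenthetical ``possible by torsion-freeness'' for choosing an integral primitive is unnecessary, since once $[m_k]=0$ in $HH^2(H_*A,H_*A)(-k)$ over $\ZZ$ an integral primitive exists automatically.
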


\begin{proof}
Write $U = H_*A$ for brevity. The claim follows from the easily checked fact that the obstruction classes for $A$ map to the obstruction classes for $A\tensor \QQ$ under the canonical map
$$HH^2(U,U)(-k) \to HH^2(U,U)(-k)\tensor \QQ \cong HH^2(U\tensor \QQ,U\tensor \QQ)(-k).$$
Clearly, this map is injective if $HH^2(U,U)(-k)$ is torsion free.
\end{proof}

\section{Applications to free loop space homology: two case studies}
In this section we will see the theory developed in the previous sections in action. We will work over the ring $\ZZ$ of integers.

We offer two case studies. Firstly, we will treat complex projective space as an example of a manifold that is formal but not coformal over $\ZZ$. The result of the calculation is not new, but the perspective is, and the reader may find it interesting to compare our approach to the existing ones, such as \cite{CohenJonesYan}.

Secondly, we treat a certain $7$-manifold, as an example of a manifold that is coformal but not formal over $\ZZ$. This is a new computation. 
Our methods apply more generally to other coformal but not formal manifolds, but in the interest of brevity and clarity we choose to focus on a specific example.

\subsection{Free loop space homology through Hochschild cohomology}

It is well known that Hochschild cohomology can be used to compute the free loop space homology of a manifold.

\begin{theorem}
\label{stringtopologyhochschildtheorem}
Let $M$ be a simply connected manifold of dimension $n$ and let $\kk$ be a commutative ring. There are graded ring isomorphisms $$H_{*+n}(LM;\kk)\cong HH^*(C^*(M;\kk),C^*(M;\kk))\cong HH^*(C_*(\Omega M;\kk),C_*(\Omega M;\kk))$$ where the algebra structure on the left hand side is the Chas-Sullivan loop product.
\end{theorem}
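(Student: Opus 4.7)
The plan splits into two independent claims, each essentially classical. For the isomorphism $HH^*(C^*(M;\kk), C^*(M;\kk)) \cong HH^*(C_*(\Omega M;\kk), C_*(\Omega M;\kk))$, the key input is the Adams-type equivalence $BC_*(\Omega M;\kk) \simeq C_*(M;\kk)$ of dg coalgebras, which holds for simply connected $M$ by the theorem of Felix-Halperin-Thomas already invoked in the proof of Theorem \ref{thm:formalkoszul}; equivalently $\Omega C_*(M;\kk) \simeq C_*(\Omega M;\kk)$ as dg algebras. One then applies the general principle that Hochschild cohomology depends, as an $A_\infty$-algebra, only on the bar construction of the input: up to quasi-isomorphism, $C^*(A,A) \cong \Hom^{\tau_A}(BA,A)$ is determined by $BA$, so $HH^*(A) \cong HH^*(A')$ whenever $BA \simeq BA'$ as dg coalgebras. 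Applied to $A = C_*(\Omega M;\kk)$ and $A' = C^*(M;\kk)$, combined with finite-type dualization between chains and cochains on $M$, this yields the stated isomorphism. The compatibility of cup products on the two sides is automatic from the convolution/Ext interpretation of the Hochschild cup product.

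For the isomorphism $H_{*+n}(LM;\kk) \cong HH^*(C^*(M;\kk), C^*(M;\kk))$ as graded algebras, I would combine three ingredients. First, Jones's cosimplicial model for $LM$ yields a quasi-isomorphism between $C^*(LM;\kk)$ and the Hochschild cochain complex of $C^*(M;\kk)$, valid for simply connected $M$. Second, Poincaré duality for the closed oriented $n$-manifold $M$ provides the additive identification $H^*(LM;\kk) \cong H_{*+n}(LM;\kk)$, absorbing the degree shift. Third, and most delicate, one must match the Chas-Sullivan loop product on the left with the Gerstenhaber cup product on Hochschild cohomology on the right: this is the string topology theorem of Cohen-Jones, implemented geometrically through the Pontryagin-Thom collapse for the codimension-$n$ embedding $LM \times_M LM \hookrightarrow LM \times LM$.

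The main obstacle is this last step: matching a geometrically defined product (transverse intersection of loops at basepoints, followed by concatenation) with a purely algebraic product on $HH^*(C^*(M;\kk))$, after passing through both the Jones cosimplicial model and Poincaré duality. A direct chain-level verification requires careful bookkeeping of orientations and signs through several different resolutions. In practice one would cite the Cohen-Jones string topology theorem rather than reproving it from scratch. The value added by the Koszul machinery of this paper enters only afterwards: once the identification with $HH^*(C^*(M;\kk))$ is in hand, Theorem \ref{thm:koszulhochschild} allows one to replace this Hochschild cohomology by the much smaller twisted convolution algebra $\Hom^\kappa(H_*(M;\kk), H_*(\Omega M;\kk))$ whenever $M$ is formal or coformal, which is exactly what powers the subsequent case studies.
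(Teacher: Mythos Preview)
Your proposal is essentially correct and in fact more detailed than the paper's own proof, which consists entirely of two citations: Cohen--Jones \cite{CohenJones} for the first isomorphism and Malm's thesis \cite{Malm} for the second. You correctly identify Cohen--Jones as the source of the hard geometric input (matching the Chas--Sullivan product with the Hochschild cup product) and give an accurate summary of its ingredients.

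The one place where your sketch is looser than it should be is the second isomorphism. You write that it follows from ``$BA \simeq BA'$ as dg coalgebras'' with $A = C_*(\Omega M;\kk)$ and $A' = C^*(M;\kk)$, but this is not literally what the Adams/F\'elix--Halperin--Thomas equivalence gives you: that equivalence is $BC_*(\Omega M;\kk) \simeq C_*(M;\kk)$, a coalgebra quasi-isomorphism to the \emph{chain} coalgebra, not to the bar construction of the \emph{cochain} algebra. Passing from $HH^*$ of the coalgebra $C_*(M;\kk)$ to $HH^*$ of the algebra $C^*(M;\kk)$ requires a separate dualization argument under finite-type hypotheses, and the multiplicativity of that step is not entirely formal. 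The paper avoids this by simply invoking \cite{Malm}, where the isomorphism $HH^*(C^*(M;\kk)) \cong HH^*(C_*(\Omega M;\kk))$ as graded rings is established directly. Your instinct to route through bar--cobar duality and Koszul-type invariance of Hochschild cohomology is sound, but if you want a self-contained argument you should make the dualization step explicit rather than fold it into the phrase ``combined with finite-type dualization.''
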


\begin{proof}
For the first isomorphism, see \cite{CohenJones}. For the second isomorphism, see \cite{Malm}. 
\end{proof}

We can now state the main theorem used for computing our free loop spaces.

\begin{theorem}
\label{stringtopologyformaltheorem}
Let $M$ be a simply connected closed $n$-dimensional manifold. Let $\kk$ be a PID such that $H_*(M;\kk)$ and $H_*(\Omega M;\kk)$ are free $\kk$-modules.
\begin{enumerate}
\item If $M$ is formal over $\kk$, then there is an algebra isomorphism
$$H_{*+n}(LM)\cong H_*\Hom^{\kappa_{H_*\Omega M}}(H_*M,H_*\Omega M),$$
where $H_*(\Omega M;\kk)$ is considered as $A_\infty$-algebra.
\item If $M$ is coformal, there is an algebra isomorphism $$H_{*+n}(LM)\cong H_*\Hom^{\kappa_{H_*M}}(H_*M,H_*\Omega M),$$ where $H_*M$ is considered as $A_\infty$-coalgebra.
\end{enumerate}
\end{theorem}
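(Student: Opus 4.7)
The plan is to chain together three previously established equivalences: the Hochschild cohomology model for free loop space homology (Theorem \ref{stringtopologyhochschildtheorem}), the (co)formality–Koszulness dictionary (Theorems \ref{thm:formalkoszul} and \ref{thm:coformalkoszul}, together with the PID extension in Remark \ref{remark:PID}), and the small twisted convolution model for the Hochschild cochains on a Koszul $A_\infty$-(co)algebra (Theorem \ref{thm:koszulhochschild}). The desired graded algebra isomorphism will arise as the composite of the resulting isomorphisms on homology.

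For part (1), starting from the assumption that $M$ is formal over $\kk$, the PID version of Theorem \ref{thm:formalkoszul} equips $H_*(\Omega M;\kk)$ with a minimal Koszul $A_\infty$-algebra structure that is $A_\infty$-quasi-isomorphic to $C_*(\Omega M;\kk)$, and identifies the Koszul dual coalgebra $H_*(\Omega M;\kk)^\as$ with $H_*(M;\kk)$. Theorem \ref{stringtopologyhochschildtheorem} gives an algebra isomorphism
$$H_{*+n}(LM;\kk)\cong HH^*(C_*(\Omega M;\kk),C_*(\Omega M;\kk)),$$
and invariance of Hochschild cohomology under $A_\infty$-quasi-isomorphism lets us replace $C_*(\Omega M;\kk)$ by its minimal Koszul model. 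Setting $A = H_*(\Omega M;\kk)$, Theorem \ref{thm:koszulhochschild}(1) then supplies a quasi-isomorphism of $A_\infty$-algebras $\Hom^{\kappa_A}(A^\as,A)\sim C^*(A,A)$, which on homology delivers the required algebra isomorphism. Part (2) is strictly dual: use the $C^*(M;\kk)$-form of Theorem \ref{stringtopologyhochschildtheorem}, invoke Theorem \ref{thm:coformalkoszul} to promote $H_*(M;\kk)$ to a minimal Koszul $A_\infty$-coalgebra with Koszul dual algebra $H_*(M;\kk)^! \cong H_*(\Omega M;\kk)$, and apply Theorem \ref{thm:koszulhochschild}(2).

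The main technical obstacle is that Theorems \ref{thm:formalkoszul}, \ref{thm:coformalkoszul}, and \ref{thm:koszulhochschild} are formulated over a field, whereas the hypothesis here is only that $\kk$ is a PID with $H_*(M;\kk)$ and $H_*(\Omega M;\kk)$ free. For the Koszulness statements, Remark \ref{remark:PID} already indicates that the arguments transfer under precisely these freeness hypotheses, since the homotopy transfer theorem requires only a split contraction. The extension of Theorem \ref{thm:koszulhochschild} is more delicate: its proof rests on a contraction from $BA$ onto $A^\as$, the basic perturbation lemma, and a weight-filtration convergence argument. All three ingredients only use that the relevant complexes are $\kk$-free and split, which our assumptions guarantee, so the argument goes through verbatim. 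The most delicate piece of bookkeeping will be verifying that the composite of these equivalences is not merely a chain-level quasi-isomorphism but is in fact multiplicative on homology, and this is where I would concentrate most of the checking.
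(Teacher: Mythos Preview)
Your proposal is correct and follows essentially the same route as the paper: the paper's proof is the one-line ``This follows from Theorems \ref{thm:formalkoszul}, \ref{thm:coformalkoszul}, \ref{thm:koszulhochschild} and \ref{stringtopologyhochschildtheorem} together with Remark \ref{remark:PID},'' and you have simply unpacked that chain of citations with additional commentary on the PID extension and on multiplicativity. One minor remark: in part (2) it is slightly cleaner to use the $C_*(\Omega M;\kk)$-form of Theorem \ref{stringtopologyhochschildtheorem} again (coformality gives $C_*(\Omega M;\kk)\sim H_*(\Omega M;\kk)=C^!$ directly, and then Theorem \ref{thm:koszulhochschild}(2) compares $C^*(C^!,C^!)$ with the twisted convolution algebra), rather than passing through $C^*(M;\kk)$ and a dualization step.
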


\begin{proof}
This follows from Theorems \ref{thm:formalkoszul}, \ref{thm:coformalkoszul}, \ref{thm:koszulhochschild} and \ref{stringtopologyhochschildtheorem} together with Remark \ref{remark:PID}.
\end{proof}

\subsection{Complex projective space}
To compute the free loop space homology of $\CP{n}$ with coefficients in $\mathbb{Z}$ we will use the following approach. First we introduce an $A_\infty$-algebra $A,$ prove that it is Koszul and note that its Koszul dual coalgebra $A^\as$ is isomorphic to $H_*(\CP{n};\mathbb{Z}).$ This enables us to use Theorem \ref{thm:koszulhochschild} to compute the Hochschild cohomology of $H_*(\CP{n})$ with its weight grading. From this we can use the obstruction theory of Proposition \ref{prop:integral formality} together with the familiar fact that $\CP{n}$ is formal over $\mathbb{Q}$ to prove that $\CP{n}$ is formal over $\mathbb{Z}.$ Then finally we can apply Theorem \ref{stringtopologyformaltheorem} to see that our Hochschild cohomology computation also calculates string topology.

\begin{theorem}
\label{complexprojectivecomputation}
The manifold $\CP{n}$ is formal over $\mathbb{Z}.$ Moreover, the Hochschild cohomology algebra of $H^*(\CP{n};\mathbb{Z})$ is isomorphic to
$$\frac{\Lambda[x,y,z]}{(x^{n+1},(n+1)x^nz,x^ny)},$$
where $\Lambda[x,y,z]$ is the free graded commutative algebra on generators $x,y,z$ with degrees $|x|=-2, |y|=-1$ and $|z|=2n.$
\end{theorem}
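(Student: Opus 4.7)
The plan follows the outline preceding the statement. First, I would construct an explicit minimal Koszul $A_\infty$-algebra $A$ modeling $C_*(\Omega\CP{n};\ZZ)$. Take the underlying graded module to be $\Lambda[y]\tensor\ZZ[u]$ with $|y|=1$, $|u|=2n$, equip it with the negative weight grading $w(y)=-1$, $w(u)=-2$, and set the $A_\infty$-structure to be the graded-commutative product $m_2$ together with a single higher operation $m_{n+1}(y,\ldots,y)=\pm u$, signs chosen so that the Stasheff relations hold (which reduces to a finite check, since $y^2=0$ forces most compositions to vanish).

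Next, identify the Koszul dual coalgebra. The weight-zero tensors in $BA$ are the pure words $[sy]^{\otimes k}$; the bar differential vanishes on these for $k\leq n$ (no $m_2$-contribution since $y^2=0$, no room for $m_{n+1}$) and sends $[sy]^{\otimes(n+1)}$ to $\pm[su]\in BA(-1)$. Hence $A^\as$ has basis $1,[sy],\ldots,[sy]^{\otimes n}$, and the deconcatenation coproduct agrees with $\Delta(a_k)=\sum_{i+j=k}a_i\tensor a_j$ on $H_*(\CP{n};\ZZ)$ via $[sy]^{\otimes k}\leftrightarrow a_k$. Koszulness of $A$ then amounts to proving $BA(-\ell)$ is acyclic for $\ell\geq 1$, which I would verify via Theorem \ref{thm:twistedtensorproductkoszul} by writing down an explicit contracting homotopy on $A^\as\tensor_{\kappa_A}A$ (essentially a Koszul-type contraction adapted to the single higher operation).

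Theorem \ref{thm:koszulhochschild} now identifies $HH^*$ of the dual algebra $H^*(\CP{n};\ZZ)$ with the homology of the small twisted convolution $A_\infty$-algebra $\Hom^{\kappa_A}(A^\as,A)$, whose underlying module has finite rank in each cohomological degree. A direct analysis of the twisted differential produces explicit cocycles $x=a_1^\vee\tensor 1$ (degree $-2$), $y=a_1^\vee\tensor y_A$ (degree $-1$), and $z=a_0^\vee\tensor u_A$ (degree $2n$). Computing their convolution products, and identifying the additional cocycles as monomials in these, would give the presentation $\Lambda[x,y,z]/(x^{n+1},(n+1)x^nz,x^ny)$. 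The crucial torsion relation $(n+1)x^nz=0$ emerges from the $m_{n+1}$-summand of the twisted differential, which produces $n+1$ identical boundary terms via the Leibniz rule applied to the deconcatenation coproduct on $A^\as$.

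Finally, for formality over $\ZZ$: the explicit presentation shows that for $n\geq 2$ the only degree-$2$ component of $HH^*$ sits in weight $-2$ (generated by $x^{n-1}z$), so every obstruction group $HH^2(H^*(\CP{n};\ZZ))(-k)$ with $k\geq 3$ vanishes, in particular is torsion-free. Combining this with the classical rational formality of $\CP{n}$ (which is K\"ahler), Proposition \ref{prop:integral formality} yields integral formality; the case $n=1$ of $S^2$ is classically formal over $\ZZ$. The main obstacle is the bookkeeping in the third paragraph --- tracking signs in the twisted differential, extracting the multiplicative structure, and in particular identifying the integer $n+1$ in the torsion relation as a combinatorial count arising from the $n+1$ ways in which the iterated coproduct on $a_n$ distributes a distinguished factor among $n+1$ copies of $a_1$.
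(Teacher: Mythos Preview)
Your proposal is correct and follows essentially the same route as the paper: the same minimal $A_\infty$-model $A=\Lambda(\alpha,\beta)$ with the single higher operation $m_{n+1}(\alpha,\ldots,\alpha)=\beta$, the same identification $A^\as\cong H_*(\CP{n})$, Koszulness via contractibility of $A^\as\tensor_{\kappa_A}A$ (Theorem \ref{thm:twistedtensorproductkoszul}), the Hochschild computation through the small model $A^!\tensor A$ twisted by $a\tensor\alpha$ (Theorem \ref{thm:koszulhochschild}), and finally integral formality from rational formality via Proposition \ref{prop:integral formality}. Your weight-by-weight vanishing argument for the obstruction groups is a bit more explicit than the paper's, but it is the same mechanism; your description of the factor $n+1$ as a count of coproduct distributions is exactly the calculation $\partial_\kappa(1\tensor\alpha\beta^k)=(n+1)a^n\tensor\beta^{k+1}$ that the paper writes down directly.
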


\begin{proof}
Let $A$ denote the $A_\infty$-algebra given by the free graded commutative algebra $\Lambda(\alpha,\beta),$ with $|\alpha|=1,|\beta|=2n$, together with the higher $A_\infty$-structure maps \begin{align*}
m_{n+1}(\alpha,\dots,\alpha)&=\beta, \\
m_{n+1}(\dots,\beta\phi,\dots)&=\beta m_{n+1}(\dots,\phi,\dots), \\
m_{n+1}(\dots,1,\dots)&=0.
\end{align*}
We put $m_k=0$ if $k$ is not equal to $2$ or $n+1.$
We give $A$ the weight grading determined by $w(\alpha)=-1$ and $w(\beta)=-2.$
From inspection of $BA$ we see that $A^\as= \mathbb{Z}\{1,x_1,\dots,x_n\}\cong H_*(\CP{n};\mathbb{Z})$ where $|x_i|=2i.$
There is a twisting morphism $\kappa_A:A^\as\rightarrow A$ taking $x_1$ to $\alpha.$ We want to show that $A^\as\tensor_{\kappa_A} A$ is contractible so that we can apply Theorem \ref{thm:twistedtensorproductkoszul} to conclude that $A$ is Koszul.

The differentials are given as follows on the basis elements where for simplicity we denote $x_0=1.$
\begin{equation*}
d_\tau(x_i\otimes \beta^k) = 
\begin{cases} 
x_{i-1}\otimes \alpha \beta^k & \text{if } i\geq 1 \\
0 & \text{if } i= 0 \\
\end{cases}
\end{equation*}
\begin{equation*}
d_\tau(x_i\otimes \alpha\beta^k) = 
\begin{cases} 
x_0\otimes \beta^{k+1} & \text{if } i = n \\
0 & \text{if } i< n \\
\end{cases}
\end{equation*}
We see that the basis elements pairs up except for $1\otimes 1$ showing that the complex is indeed contractible. 
Now we can apply Theorem \ref{thm:koszulhochschild} to calculate Hochschild cohomology of $A^\as.$
For ease of writing we will dualize $A^\as$ and use $A^!\cong H^*(\CP{n};\mathbb{Z}),$ the linear dual of $A^\as.$ 

Since it is isomorphic to $T(a)/(a^{n+1})$ with $|a|=-2,$ $\Hom^{\kappa_A}(A^\as,A)$ is isomorphic to the $A_\infty$-algebra $A^!\otimes A$ twisted by the element $a\otimes \alpha.$ The twisted differential is given on generators as follows.


\begin{equation*}
\partial_\tau(a^\ell \otimes \beta^k) = 0, \quad \quad\\
\partial_\tau(a^\ell \otimes \alpha\beta^k)=
\begin{cases} 
(n+1) a^n \otimes \beta^{k+1} & \text{if } \ell = 0 \\
0 & \text{if } \ell > 0 \\
\end{cases}
\end{equation*}

The twisted multiplication is given by $(a^k\otimes\alpha^i\beta^\ell)(a^p\otimes\alpha^j\beta^q)=$ $$a^{k+p}\otimes\alpha^{i+j}\beta^{\ell+q}+\sum_{antisymmetric \atop shuffles} \pm a^{k+p+n-1}\otimes m_{n+1}(\alpha,\dots,\alpha^i,\dots,\alpha^j,\dots,\alpha)\beta^{\ell+q}.$$
The first term is zero unless $i+j < 2$ and the second term is zero unless $i+j=2,$ so we only have one term at the time for any two basis elements. Moreover, the second term is zero unless $k+p <2$ so the only possible non-zero second terms are given by the following.
\begin{align*}
(1\otimes \alpha \beta^k)(1\otimes \alpha \beta^q)&=\binom{n+1}{2}a^{n-1}\otimes \beta^{k+q+1} \\
(a\otimes \alpha \beta^k)(1\otimes \alpha \beta^q)&=\binom{n+1}{2}a^{n}\otimes \beta^{k+q+1} \\
(1\otimes \alpha \beta^k)(a\otimes \alpha \beta^q)&=\binom{n+1}{2}a^{n}\otimes \beta^{k+q+1} 
\end{align*}
Since $1\otimes \alpha \beta^k$ is not a cycle, these terms will not affect the multiplication in the homology.
We see that $a\otimes 1, a\otimes \alpha$ and $1\otimes \beta$ are algebra generators for the homology and if we write $a\otimes 1=x, a\otimes \alpha=y$ and $1\otimes \beta=z$ we see that we get the relations $x^{n+1}=0,x^ny=0$ and $(n+1)x^nz=0.$
From this description of the Hochschild cohomology of $A^\as$ we see that we can apply Proposition \ref{prop:integral formality} together with the fact that $\CP{n}$ is rationally formal (since it is a K\"ahler manifold) to conclude that $\CP{n}$ is formal over the integers.
\end{proof}

\begin{corollary}
There is an isomorphism $$H_{*+2n}L\CP{n}\cong \frac{\Lambda[x,y,z]}{(x^{n+1},(n+1)x^nz,x^ny)},$$ of graded algebras where the algebra structure on the left hand side is given by the string topology multiplication.
\end{corollary}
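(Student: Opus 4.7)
The plan is to deduce this corollary by combining the Hochschild cohomology calculation of Theorem \ref{complexprojectivecomputation} with Theorem \ref{stringtopologyformaltheorem}. Since Theorem \ref{complexprojectivecomputation} has already established both that $\CP{n}$ is formal over $\ZZ$ and that the relevant Hochschild cohomology algebra equals $\Lambda[x,y,z]/(x^{n+1},(n+1)x^n z,x^n y)$, the corollary is essentially a packaging statement: what remains is to identify the Chas-Sullivan algebra $H_{*+2n}(L\CP{n})$ with this ring.

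Concretely, I would first verify the freeness hypotheses of Theorem \ref{stringtopologyformaltheorem}: the module $H_*(\CP{n};\ZZ)$ is free (it is $\ZZ$ in each even degree up to $2n$), and $H_*(\Omega\CP{n};\ZZ)$ is free as well (for instance via the splitting $\Omega\CP{n}\simeq S^1\times \Omega S^{2n+1}$, or directly from the fiber sequence). Combined with integral formality, part (1) of Theorem \ref{stringtopologyformaltheorem} then yields a graded algebra isomorphism
$$H_{*+2n}(L\CP{n};\ZZ)\cong H_*\Hom^{\kappa_{H_*\Omega\CP{n}}}(H_*\CP{n},H_*\Omega\CP{n}),$$
with the Chas-Sullivan loop product on the left. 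Next, Theorem \ref{thm:koszulhochschild}(1) identifies the right-hand side as a graded algebra with the Hochschild cohomology of the Koszul $A_\infty$-algebra $H_*(\Omega\CP{n};\ZZ)$, and this Hochschild cohomology has already been computed in Theorem \ref{complexprojectivecomputation}.

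The only point that calls for any verification beyond citation is that the hand-built Koszul $A_\infty$-algebra $A$ from the proof of Theorem \ref{complexprojectivecomputation} really is a model for $H_*(\Omega\CP{n};\ZZ)$ with its minimal $A_\infty$-algebra structure. I would argue this from the identification $A^\as\cong H_*(\CP{n};\ZZ)$ of coalgebras established there, together with Theorem \ref{thm:formalkoszul} (and Remark \ref{remark:PID} for integer coefficients): the minimal Koszul $A_\infty$-model of $C_*(\Omega\CP{n};\ZZ)$ is determined up to quasi-isomorphism by its Koszul dual coalgebra, so $A$ and $H_*(\Omega\CP{n};\ZZ)$ are quasi-isomorphic as $A_\infty$-algebras. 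This bookkeeping step is the only thing resembling an obstacle, and it is essentially forced by the preceding theory; no new algebraic computation is required.
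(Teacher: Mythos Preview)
Your proposal is correct and follows essentially the same route as the paper, which simply cites Theorem \ref{complexprojectivecomputation} together with Theorem \ref{stringtopologyformaltheorem}. You add more detail than the paper does---verifying the freeness hypotheses on $H_*(\CP{n};\ZZ)$ and $H_*(\Omega\CP{n};\ZZ)$ and spelling out why the hand-built Koszul $A_\infty$-algebra $A$ is quasi-isomorphic to $H_*(\Omega\CP{n};\ZZ)$---but this is exactly the bookkeeping implicit in the paper's two-line proof, and your justification via the Koszul dual coalgebra (two Koszul $A_\infty$-algebras with isomorphic Koszul dual coalgebras are both quasi-isomorphic to the cobar construction on that coalgebra) is sound.
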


\begin{proof}
This follows from the above theorem together with Theorem \ref{stringtopologyformaltheorem}.
\end{proof}

\subsection{A non-formal $7$-manifold}
The manifold $M$ is defined as the pullback of the Hopf fibration $\eta\colon S^7 \to S^4$ along the collapse map $S^2\times S^2 \to S^4$;
$$
\xymatrix{M \ar[d] \ar[r] & S^7 \ar[d]^-\eta \\ S^2 \times S^2 \ar[r]^-\wedge & S^4.}
$$
In other words,
$$M = \set{(x,y,z)\in S^2\times S^2 \times S^7}{x\wedge y = \eta(z)} \subset S^2 \times S^2 \times S^7.$$
The manifold $M$ is the total space of a principal $S^3$-bundle,
$$S^3 \xrightarrow{i} M \xrightarrow{p} S^2\times S^2.$$

We begin by computing the cohomology ring of $M$.
\begin{theorem}
The cohomology ring of $M$ is given by
$$H^*M = \ZZ \oplus \ZZ a \oplus \ZZ b \oplus \ZZ x \oplus \ZZ y \oplus \ZZ M,$$
where $|a| = |b| =2$, $|x|=|y| = 5$, and $|M|= 7$.
The ring structure is determined by $a\smile x = - b\smile y = M$; all other non-trivial products are zero.
\end{theorem}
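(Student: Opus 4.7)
The natural approach is the cohomological Serre spectral sequence of the principal $S^3$-bundle $S^3 \xrightarrow{i} M \xrightarrow{p} S^2 \times S^2$. Since the base is simply connected, $E_2^{p,q} = H^p(S^2\times S^2) \otimes H^q(S^3)$, concentrated in the rows $q = 0$ and $q = 3$, so the only potentially nontrivial differential is the transgression $d_4\colon E_4^{0,3}\to E_4^{4,0}$. Writing $a, b$ for the degree-$2$ generators of $H^*(S^2\times S^2)$ and $u$ for the generator of $H^3(S^3)$, this differential is determined by $d_4(u)$, which is the Euler class of the bundle.

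I would first identify this Euler class by naturality. Since $M\to S^2\times S^2$ is pulled back from the Hopf fibration along the collapse map $\wedge\colon S^2\times S^2\to S^4$, and the Hopf fibration has Euler class equal to the generator of $H^4(S^4)$, and $\wedge^*$ sends this generator to $ab\in H^4(S^2\times S^2)$, one concludes $d_4(u) = ab$. The Leibniz rule then gives $d_4(au) = a^2b = 0$, $d_4(bu) = ab^2 = 0$, $d_4(abu) = 0$, so on $E_5 = E_\infty$ the classes $1, a, b$ survive in row $0$ and $au, bu, abu$ in row $3$, while $ab$ dies. This yields the stated additive basis, and since $E_\infty$ is torsion-free there are no additive extension problems.

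Next I would read off the multiplicative structure. The products $a^2, b^2$ vanish in $H^*M$ because $a$ and $b$ are pulled back from $S^2\times S^2$, and $ab$ vanishes because it lies in the image of $d_4$ while the higher filtration piece $F^5 H^4 M$ is zero. Choosing lifts $x, y \in H^5 M$ of the classes $bu$ and $au$ respectively, the products $a\smile x$ and $b\smile y$ both represent $abu$ in $E_\infty^{4,3}$, and since $F^5 H^7 M = 0$ each is equal to a generator of $H^7 M \cong \ZZ$; replacing $y$ by its negative if necessary yields $a\smile x = -b\smile y = M$. All remaining products vanish: $xy, x^2, y^2$ lie in $H^{\geq 10} M = 0$ for degree reasons, and $a\smile y$, $b\smile x$ represent $a^2 u$ and $b^2 u$ in $E_\infty^{4,3}$, hence vanish since again $F^5 H^7 M = 0$.

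The main obstacle is the identification $d_4(u) = ab$, which hinges on matching the Euler class of the pulled-back bundle with the cup product under $\wedge^*$ together with the standard transgression computation for the Hopf fibration. The specific sign in $a\smile x = -b\smile y$ is a matter of normalizing the lift $y$.
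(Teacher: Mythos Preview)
Your proof is correct and essentially follows the paper's argument: the paper uses the Gysin sequence of the $S^3$-bundle $M \to S^2 \times S^2$, which is the same as your Serre spectral sequence (two rows, transgression equal to the Euler class), with the identical identification of the Euler class as $ab$ by naturality under the pullback along $\wedge$. The only minor difference is in the final step, where the paper invokes Poincar\'e duality to choose generators $a,b,x,y$ so that $a\smile x = -b\smile y = M$, whereas you extract the products directly from the multiplicative structure and filtration of the spectral sequence.
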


\begin{proof}
It is well known that the Euler class of the $S^3$-bundle $S^7\rightarrow S^4$ is $\pm 1$ times the fundamental class. We can also see this by applying the Gysin sequence associated to $\eta \colon S^7\rightarrow S^4.$ Euler classes are preserved under pullbacks, so the Euler class of the $S^3$-bundle $M\rightarrow S^2\times S^2$ is $\pm 1$ times the fundamental class of $S^2\times S^2.$ Now we can analyze the associated Gysin sequence. We see that $H^4(M)\cong 0$ since $$H^0(S^2\times S^2)\xrightarrow{\smile e} H^4(S^2\times S^2)\rightarrow H^4(M)\rightarrow 0$$ has to be exact and the first map is multiplication with the Euler class and thus an isomorphism. We also see that $H^5(M)\cong \mathbb{Z}^2, H^6(M)\cong 0$ and $H^7(M)\cong \mathbb{Z}$ from the Gysin sequence. Now the result follows from Poincar\'{e} duality if we choose generators $x,y$ for $H^5(M)$ and let $a,b\in H^2(M)$ be defined via the Poincar\'{e} duality pairing such that $a\smile x=-b\smile y=M.$
\end{proof}

Next, we turn to the homotopy groups.

\begin{theorem} \label{thm:homotopy groups}
The manifold $M$ is simply connected and for every $k\geq 2$, the map
$$\psi\colon \pi_k(S^2) \oplus \pi_k(S^2) \oplus \pi_k(S^3) \to \pi_k(M),$$
sending $(a,b,c)$ to $\alpha\circ a + \beta \circ b + [\alpha,\beta] \circ c$, is an isomorphism.
\end{theorem}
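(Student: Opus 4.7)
Our plan is to exploit the homotopy long exact sequence of the principal bundle $S^3 \xrightarrow{i} M \xrightarrow{p} S^2 \times S^2$. Simple connectivity of $M$ is immediate since both fiber and base are simply connected. To construct $\alpha$ and $\beta$, we will use that the restriction of the bundle to either axis $S^2 \subset S^2 \times S^2$ is classified by $\pi_2(BS^3) = \pi_1(S^3) = 0$ and hence is trivial; a choice of sections $s_1, s_2\colon S^2 \to M$ then yields $\alpha = [s_1]$ and $\beta = [s_2]$ in $\pi_2(M)$.

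With $\alpha, \beta$ in hand, the map $(a,b) \mapsto \alpha\circ a + \beta \circ b$ is a section of $p_*\colon \pi_k(M) \to \pi_k(S^2) \oplus \pi_k(S^2)$. Thus $p_*$ is surjective and the connecting homomorphism $\partial$ vanishes in every positive degree, so the long exact sequence breaks into split short exact sequences
$$0 \to \pi_k(S^3) \xrightarrow{i_*} \pi_k(M) \xrightarrow{p_*} \pi_k(S^2) \oplus \pi_k(S^2) \to 0 \quad (k \geq 2),$$
with splitting $(\alpha_*, \beta_*)$.

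The remaining and main task is to identify the third summand $i_*\pi_k(S^3)$ with $[\alpha,\beta]\circ\pi_k(S^3)$. Since the inclusions of the factors of a product have trivial Whitehead product, $p_*[\alpha,\beta] = [\iota_1,\iota_2] = 0$ in $\pi_3(S^2\times S^2)$, so $[\alpha,\beta] = n\cdot i_*(\iota_3)$ for some integer $n$, and the hard part is to show $n = \pm 1$. Our strategy is to recognize $[\alpha,\beta]$ as the primary obstruction to extending the section $s_1\vee s_2\colon S^2\vee S^2 \to M$ over the top cell $e^4$ of $S^2\times S^2$ (whose attaching map is precisely $[\iota_1,\iota_2]$); by classical obstruction theory, this obstruction coincides with the Euler class of the bundle, which by the preceding theorem generates $H^4(S^2\times S^2;\ZZ)$. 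The naturality identity $i_*(c) = i_*(\iota_3)\circ c = \pm[\alpha,\beta]\circ c$ then rewrites $i_*$ in terms of $[\alpha,\beta]$, so that $\psi$ emerges as an isomorphism (the overall sign being absorbed into the generator of $\pi_k(S^3)$).
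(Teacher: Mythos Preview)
Your argument is correct, and the key step---showing that the fiber inclusion $i$ agrees with $\pm[\alpha,\beta]$ in $\pi_3(M)$---proceeds along a genuinely different line from the paper's. The paper argues as follows: since the universal Whitehead product $w_{2,2}\colon S^3\to S^2\vee S^2$ becomes null in $S^2\times S^2$, the composite $(\alpha\vee\beta)\circ w_{2,2}=[\alpha,\beta]$ factors through the fiber as $i\circ\lambda$ for some self-map $\lambda$ of $S^3$; then, using that $\alpha\vee\beta\colon S^2\vee S^2\to M$ is $4$-connected (checked on homology) together with the Hilton--Milnor splitting of $\pi_3(S^2\vee S^2)$, a five-lemma comparison of the two short exact sequences on $\pi_3$ forces $\lambda_*$ to be an isomorphism, so $\deg\lambda=\pm1$. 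Your approach instead reads the integer $n$ in $[\alpha,\beta]=n\,i_*(\iota_3)$ directly as the obstruction cocycle for extending the partial section $s_1\vee s_2$ across the $4$-cell, and then invokes the classical identification of the primary obstruction to a section of an oriented $S^3$-bundle with its Euler class---already computed in the preceding theorem to be $\pm1$. Your route is shorter and recycles the Gysin/Euler-class calculation rather than importing Hilton--Milnor and a connectivity argument; the paper's route, on the other hand, is more self-contained in that it avoids any appeal to obstruction theory, and its $4$-connectivity statement for $\alpha\vee\beta$ is reused immediately afterward in Corollary~\ref{cor:kernel}. Either way the remaining bookkeeping (splitting the long exact sequence via the sections, and rewriting $i_*(c)=\pm[\alpha,\beta]\circ c$) is identical.
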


\begin{proof}
There are two embeddings $\alpha,\beta\colon S^2\to M$ given by $\alpha(x) = (x,*,*)$ and $\beta(y) = (*,y,*)$. The composite
$$S^2\vee S^2 \xrightarrow{\alpha\vee \beta} M \to S^2\times S^2$$
is the standard inclusion of the wedge into the product. In particular, the composite map $\pi_k(S^2\vee S^2) \to \pi_k(M) \to \pi_k(S^2\times S^2)$ is split surjective. It follows that so is $\pi_k(M) \to \pi_k(S^2\times S^2)$. From the long exact homotopy sequence of the fibration $S^3 \xrightarrow{i} M \xrightarrow{p} S^2\times S^2$ we deduce that the map
$$\pi_k(S^2) \oplus \pi_k(S^2) \oplus \pi_k(S^3) \to \pi_k(M)$$
sending $(a,b,c)$ to $\alpha\circ a + \beta \circ b + i \circ c$, is an isomorphism for every $k\geq 2$. It remains to identify the inclusion of the fiber, $i\colon S^3\to M$, with the Whitehead product $[\alpha,\beta]$ up to homotopy.

The universal Whitehead product $w_{2,2}\colon S^3 \to S^2\vee S^2$ is null when composed with the inclusion into the product $S^2\times S^2$. It follows that the composite map $S^3 \to S^2\vee S^2 \to M$ factors over the fiber of $p\colon M\to S^2\times S^2$, giving a self-map $\lambda$ of $S^3$ such that the diagram
$$
\xymatrix{S^3 \ar@{-->}[d]^-\lambda \ar[r]^-{w_{2,2}} & S^2\vee S^2 \ar[d]^-{\alpha\vee \beta} \ar[r] & S^2\times S^2 \ar@{=}[d] \\ S^3 \ar[r]^-i & M \ar[r]^-p & S^2\times S^2}
$$
commutes up to homotopy. By looking at the induced maps on $\pi_3$, we get a commutative diagram
$$
\xymatrix{0 \ar[r] & \pi_3(S^3) \ar[r] \ar[d]^-{\lambda_*} & \pi_3(S^2\vee S^2) \ar[d] \ar[r] & \pi_3(S^2\times S^2) \ar@{=}[d] \ar[r] & 0 \\
0 \ar[r] & \pi_3(S^3) \ar[r]^-{i_*} & \pi_3(M) \ar[r]^-{p_*} & \pi_3(S^2 \times S^2) \ar[r] & 0.}
$$
The upper row is split exact; this follows from the Hilton-Milnor theorem. The bottom row is split exact by our considerations above.

By comparing homology, the map $\alpha\vee \beta\colon S^2\vee S^2 \to M$ is seen to be $4$-connected. In particular, the middle vertical map in the diagram above is an isomorphism. It follows from the five lemma that $\lambda_*$ is an isomorphism, in other words $\lambda\colon S^3 \to S^3$ has degree $\pm 1$. Since $i\circ \lambda \simeq (\alpha \vee \beta)\circ w_{2,2} = [\alpha,\beta]$, this implies that $i$ is homotopic to $\pm [\alpha,\beta]$.
\end{proof}

The following corollary will be useful when we later compute the Pontryagin ring $H_*(\Omega M;\ZZ)$.

\begin{corollary} \label{cor:kernel}
The map $(\alpha\vee \beta)_* \colon \pi_k(S^2\vee S^2) \to \pi_k(M)$ is an isomorphism for $k\leq 3$ and split surjective for all $k\geq 4$.
The kernel of $\pi_4(S^2\vee S^2) \to \pi_4(M)$ is isomorphic to $\ZZ^2$, generated by the Whitehead products $[[\iota_1,\iota_2],\iota_1]$ and $[[\iota_1,\iota_2],\iota_2]$.
\end{corollary}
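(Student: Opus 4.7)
The plan is to combine Theorem~\ref{thm:homotopy groups} with the Hilton--Milnor splitting of $\pi_*(S^2\vee S^2)$. Recall that Hilton--Milnor gives a natural direct sum decomposition
$$\pi_k(S^2 \vee S^2) \;\cong\; \bigoplus_w \pi_k(S^{n_w}),$$
indexed by a Hall basis of basic Whitehead products $w$ in two degree-$2$ generators $\iota_1, \iota_2$, where the summand indexed by $w$ embeds via post-composition with $w\colon S^{n_w}\to S^2\vee S^2$. The low-weight basic products give $\iota_1, \iota_2$ (two copies of $\pi_k(S^2)$), $[\iota_1,\iota_2]$ (one copy of $\pi_k(S^3)$), and the two weight-$3$ products $[[\iota_1,\iota_2],\iota_1]$, $[[\iota_1,\iota_2],\iota_2]$ (two copies of $\pi_k(S^4)$); all higher-weight basic products map from $S^n$ with $n \geq 5$. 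After pushing forward through $(\alpha\vee\beta)_*$, the summand indexed by $w$ lands in the image of the Whitehead expression $w(\alpha,\beta)$.

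First I would dispose of the claims for $k\leq 3$ and of split surjectivity for all $k\geq 2$. The weight~$\leq 2$ summands hit the subgroup $\alpha_*\pi_k(S^2) \oplus \beta_*\pi_k(S^2) \oplus [\alpha,\beta]_*\pi_k(S^3)$ of $\pi_k(M)$, and since $[\alpha,\beta]\simeq\pm i$ by Theorem~\ref{thm:homotopy groups}, this subgroup \emph{is} $\pi_k(M)$. This immediately gives split surjectivity for all $k\geq 2$, and an isomorphism when $k\leq 3$: in that range $\pi_k(S^n) = 0$ for $n \geq 4$, so the weight~$\leq 2$ part exhausts $\pi_k(S^2\vee S^2)$.

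For the kernel at $k = 4$, only the two weight-$3$ summands $\pi_4(S^4)\cong \ZZ$ survive beyond the weight~$\leq 2$ part, because $\pi_4(S^n) = 0$ for $n \geq 5$. Their generators map under $(\alpha\vee\beta)_*$ to $[[\alpha,\beta],\alpha]$ and $[[\alpha,\beta],\beta]$, which by bilinearity of the Whitehead product and $[\alpha,\beta] = \pm i$ equal $\pm[i,\alpha]$ and $\pm[i,\beta]$. The crux of the proof, and the step I expect to be the main obstacle, is to show that these two Whitehead products vanish in $\pi_4(M)$. For this I would exploit the principal $S^3$-bundle structure: the action $\mu\colon S^3 \times M \to M$ produces a map
$$\mu\circ(1\times\alpha)\colon S^3\times S^2 \longrightarrow M$$
whose restriction to the wedge $S^3\vee S^2$ recovers $i \vee \alpha$. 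Since $[i,\alpha]$ is by definition the obstruction to extending $i\vee\alpha$ from the wedge to the product, this extension forces $[i,\alpha]=0$; the same argument with $\beta$ in place of $\alpha$ gives $[i,\beta]=0$. Combining with the previous paragraph, $(\alpha\vee\beta)_*$ is an isomorphism on the weight~$\leq 2$ part of $\pi_4(S^2\vee S^2)$ and zero on the two weight-$3$ summands, so its kernel is precisely the free abelian group $\ZZ^2$ generated by $[[\iota_1,\iota_2],\iota_1]$ and $[[\iota_1,\iota_2],\iota_2]$, as claimed.
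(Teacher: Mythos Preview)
Your proof is correct and follows the same route as the paper's: both invoke the Hilton--Milnor decomposition of $\pi_*(S^2\vee S^2)$ and compare it against the isomorphism $\psi$ of Theorem~\ref{thm:homotopy groups}. Where the paper simply declares that the composite ``clearly has kernel $\pi_4(S^4)\oplus\pi_4(S^4)$'', you actually justify the implicit step---that $[[\alpha,\beta],\alpha]$ and $[[\alpha,\beta],\beta]$ vanish in $\pi_4(M)$---by using the principal $S^3$-action to extend $i\vee\alpha$ (resp.\ $i\vee\beta$) over $S^3\times S^2$, forcing $[i,\alpha]=[i,\beta]=0$; without this, Theorem~\ref{thm:homotopy groups} alone only gives that the kernel is \emph{isomorphic} to $\ZZ^2$, not that it coincides with the weight-$3$ summand on the nose.
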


\begin{proof}
We have already established that the map $\alpha\vee \beta\colon S^2\vee S^2 \to M$ is $4$-connected. Let $\iota_1,\iota_2\colon S^2\to S^2\vee S^2$ denote the canonical inclusion maps. Define $\varphi\colon  \pi_k(S^2) \oplus \pi_k(S^2) \oplus \pi_k(S^3) \to \pi_k(S^2\vee S^2)$ by
$$\varphi(a,b,c) = \iota_1\circ a + \iota_2 \circ b + [\iota_1,\iota_2] \circ c.$$
The composite $\varphi\colon  \pi_k(S^2) \oplus \pi_k(S^2) \oplus \pi_k(S^3) \xrightarrow{\varphi} \pi_k(S^2\vee S^2) \xrightarrow{(\alpha\vee \beta)_*} \pi_k(M)$ is equal to the isomorphism $\psi$. It follows that $(\alpha\vee \beta)_*$ is split onto, as claimed.

By the Hilton-Milnor theorem, the map
$$\xi\colon \pi_4(S^2) \oplus \pi_4(S^2) \oplus \pi_4(S^3) \oplus \pi_4(S^4) \oplus \pi_4(S^4) \to \pi_4(S^2\vee S^2),$$
$$\xi(a,b,c,d,e) = \iota_1\circ a + \iota_2\circ b + [\iota_1,\iota_2]\circ c + [[\iota_1,\iota_2],\iota_1] \circ d + [[\iota_1,\iota_2],\iota_2] \circ e,$$
is an isomorphism. In view of Theorem \ref{thm:homotopy groups}, the composite of $\xi$ with $(\alpha\vee \beta)_*\colon \pi_4(S^2\vee S^2) \to \pi_4(M)$ clearly has kernel $\pi_4(S^4) \oplus \pi_4(S^4)$ generated by $[[\iota_1,\iota_2],\iota_1]$ and $[[\iota_1,\iota_2],\iota_2]$.
\end{proof}

Another corollary is a description of the rational homotopy Lie algebra and, as a consequence, the rational Pontryagin ring $H_*(\Omega M;\QQ)$.

\begin{corollary}
The rational homotopy Lie algebra of the manifold $M$ is $5$-dimensional with basis $\alpha,\beta,\alpha^2,\beta^2,[\alpha,\beta]$. In particular, $M$ is rationally elliptic.

A presentation is given by
$$\pi_*(\Omega M) \tensor \QQ = \LL(\alpha,\beta)/([[\alpha,\beta],\alpha],[[\alpha,\beta],\beta]).$$
Hence, the rational loop space homology algebra is given by
$$H_*(\Omega M;\QQ) \cong \QQ\langle \alpha,\beta\rangle / ([[\alpha,\beta],\alpha],[[\alpha,\beta],\beta]).$$
The Poincar\'e series of the loop space is given by
$$\sum_{k\geq 0} \rank(H_k \Omega M) t^k = \frac{1}{(1-t)^2(1-t^2)}.$$
\end{corollary}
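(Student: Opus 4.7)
The plan is to exploit the $4$-connected map $\alpha\vee\beta\colon S^2\vee S^2\to M$ to reduce the computation to the rational homotopy Lie algebra of $S^2\vee S^2$, which is the free graded Lie algebra $\LL(\alpha,\beta)$ on two generators of degree $1$ by the Hilton--Milnor theorem. By naturality of the Whitehead product, $(\alpha\vee\beta)_*$ induces a homomorphism of graded Lie algebras $\LL(\alpha,\beta)\to \pi_*(\Omega M)\tensor\QQ$, which is surjective by Corollary \ref{cor:kernel}, and whose kernel contains $[[\alpha,\beta],\alpha]$ and $[[\alpha,\beta],\beta]$ by the same corollary. Letting $I$ denote the Lie ideal these two elements generate, this yields a surjection $\LL(\alpha,\beta)/I\twoheadrightarrow \pi_*(\Omega M)\tensor\QQ$.

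The next step is to check that $\LL(\alpha,\beta)/I$ has dimension $5$, with basis $\alpha,\beta,\alpha^2,\beta^2,[\alpha,\beta]$ (where $\alpha^2$ abbreviates $\tfrac{1}{2}[\alpha,\alpha]$). This is a short Jacobi calculation: applied to $(\beta,\alpha,\alpha)$, graded Jacobi together with antisymmetry yields $[[\alpha,\alpha],\beta]=-2[[\alpha,\beta],\alpha]$, and symmetrically $[[\beta,\beta],\alpha]=-2[[\alpha,\beta],\beta]$; combined with $[[\alpha,\alpha],\alpha]=0=[[\beta,\beta],\beta]$ (which also follow from Jacobi rationally), these exhaust a Hall basis of $\LL(\alpha,\beta)_3$, so $(\LL(\alpha,\beta)/I)_3=0$ and all higher degrees then vanish by induction. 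To upgrade the surjection to an isomorphism I would compare dimensions: by Theorem \ref{thm:homotopy groups}, $\pi_k(M)\tensor\QQ\cong (\pi_k(S^2)\tensor\QQ)^{\oplus 2}\oplus \pi_k(S^3)\tensor\QQ$, which by Serre's calculation is $\QQ^2$ for $k=2$, $\QQ^3$ for $k=3$, and zero otherwise --- totalling precisely $5$. Rational ellipticity is then immediate.

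The Pontryagin ring description follows from the Milnor--Moore theorem: $H_*(\Omega M;\QQ)\cong U(\pi_*(\Omega M)\tensor\QQ)$, and $U(\LL(\alpha,\beta)/I)$ is presented by $\QQ\langle \alpha,\beta\rangle$ modulo the two-sided ideal generated by the commutators $[[\alpha,\beta],\alpha]$ and $[[\alpha,\beta],\beta]$. The Poincar\'e series is then read off from the graded Poincar\'e--Birkhoff--Witt formula applied to the five basis elements: the two odd-degree generators $\alpha,\beta$ contribute a factor $(1+t)^2$, and the three even-degree generators $\alpha^2,\beta^2,[\alpha,\beta]$ contribute $(1-t^2)^{-3}$, which simplifies to $((1-t)^2(1-t^2))^{-1}$. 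The principal subtle point is the identification of the kernel of $(\alpha\vee\beta)_*$ with $I$, which is forced by the dimension count against Theorem \ref{thm:homotopy groups}.
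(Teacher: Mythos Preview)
Your proof is correct and follows essentially the same approach as the paper: both rest on Theorem \ref{thm:homotopy groups} to determine the rational homotopy groups, then invoke Milnor--Moore and Poincar\'e--Birkhoff--Witt for the Pontryagin ring and its Poincar\'e series. Your version is more explicit about how the presentation arises --- routing through the surjection from $\LL(\alpha,\beta)$ via Corollary \ref{cor:kernel}, the Jacobi computation showing $\LL(\alpha,\beta)/I$ is $5$-dimensional, and a dimension count --- whereas the paper simply asserts that the basis and presentation follow immediately from Theorem \ref{thm:homotopy groups} together with the known rational homotopy of $S^2$ and $S^3$.
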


\begin{proof}
The description of the rational homotopy Lie algebra follows immediately from Theorem \ref{thm:homotopy groups}; recall that $\pi_*(S^2)\tensor \QQ$ has basis $\iota$, $\eta$, with $\iota^2 =\frac{1}{2}[\iota,\iota] = \eta$, and $\pi_*(S^3)\tensor \QQ$ has basis $\iota$.

The description of the rational loop space homology algebra follows from the Milnor-Moore theorem: since $M$ is simply connected, the loop space homology algebra $H_*(\Omega M;\QQ)$ is isomorphic to the universal enveloping algebra $UL$ of the graded Lie algebra $L = \pi_*(\Omega M)\tensor \QQ$.

By the Poincar\'e-Birkhoff-Witt theorem, there is an isomorphism of graded vector spaces $UL \cong \Lambda L$, where $\Lambda L$ denotes the free graded commutative algebra on $L$. The description of the Poincar\'e series of $H_*(\Omega M;\QQ)$ is an easy consequence of this fact.
\end{proof}

We now turn to the computation of the integral Pontryagin ring. First, we need to establish an auxiliary result.

\begin{lemma}
The Serre spectral sequence of the fibration
$$\Omega S^3 \to \Omega M \to \Omega (S^2\times S^2)$$
collapses at the $E_2$-page. Hence, there is a filtration of the Pontryagin ring $H_*\Omega M$ such that the associated graded ring is isomorphic to  $$H_*\Omega (S^2 \times S^2) \tensor H_*\Omega S^3 \cong \ZZ\langle \alpha,\beta,\gamma\rangle/([\alpha,\beta], [\alpha,\gamma],[\beta,\gamma]).$$
In particular, $H_*\Omega M$ is torsion-free.
\end{lemma}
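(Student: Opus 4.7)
The plan is to establish $E_2$-collapse rationally by a Poincar\'e-series count and then promote to the integers using torsion-freeness. First, I would identify the $E_2$-page: since the fibration in question is obtained by looping the principal $S^3$-bundle $S^3\to M\to S^2\times S^2$, the looped map is a principal $\Omega S^3$-bundle; as $\Omega S^3$ is a connected topological group acting on itself by translations (which are homotopic to the identity), the local coefficient system is trivial and
$$E^2_{p,q} \cong H_p(\Omega(S^2\times S^2);\ZZ)\otimes H_q(\Omega S^3;\ZZ)$$
as bigraded algebras.

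Second, I would invoke the James splittings: $H_*(\Omega S^3;\ZZ)\cong \ZZ[\gamma]$ with $|\gamma|=2$, and the splitting $\Omega S^2\simeq S^1\times \Omega S^3$ shows that $H_*(\Omega S^2;\ZZ)$ is torsion-free, of rank one in every non-negative degree. Hence the $E_2$-page is torsion-free with total Poincar\'e series
$$\frac{1}{(1-t)^2(1-t^2)},$$
matching the rational Pontryagin Poincar\'e series of $\Omega M$ from the previous corollary. Because $E_\infty\otimes\QQ$ is a subquotient of $E_2\otimes\QQ$ of the same total dimension, every rational differential must vanish.

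Third, I would bootstrap to the integers by induction on $r$: if $E_r$ is torsion-free, then any differential killed after tensoring with $\QQ$ is already zero over $\ZZ$, so $E_{r+1}=E_r$ remains torsion-free. The base case $r=2$ was established above. Thus $E_\infty=E_2$ integrally, and the induced filtration of $H_*\Omega M$ has torsion-free associated graded, so $H_*\Omega M$ itself is torsion-free.

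Finally, the Serre spectral sequence of a fibration of $H$-spaces is multiplicative for the Pontryagin product, so the ring on $E_\infty=E_2$ is the tensor product of the two Pontryagin rings. Combining James with K\"unneth identifies this with $\ZZ\langle\alpha,\beta,\gamma\rangle/([\alpha,\beta],[\alpha,\gamma],[\beta,\gamma])$, where $\alpha,\beta$ are the two degree-$1$ generators from $H_*\Omega S^2\otimes H_*\Omega S^2$, $\gamma$ is the degree-$2$ generator from $H_*\Omega S^3$, and the brackets are graded commutators. The main subtlety I anticipate is not in any single step but in the spectral-sequence bookkeeping: triviality of the local system (which follows from principality but deserves a sentence) and multiplicativity of the Serre spectral sequence for this fibration of topological monoids (standard but worth citing).
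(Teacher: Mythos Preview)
Your proposal is correct and follows essentially the same route as the paper: identify $E_2$ as a torsion-free tensor product, match its Poincar\'e series against the rational one for $H_*(\Omega M;\QQ)$ obtained earlier to force rational collapse, use torsion-freeness to promote this to integral collapse, and invoke multiplicativity for the ring statement. Your justification of the trivial local system via principality and connectedness of $\Omega S^3$, and your explicit induction on $r$ for the integral step, are slightly more detailed than the paper's treatment, but the underlying argument is the same.
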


\begin{proof}
First, we note that $H_*\Omega(S^2\times S^2)$ is isomorphic to $\ZZ\langle\alpha,\beta\rangle/(\alpha\beta + \beta\alpha)$ and $H_*\Omega S^3$ is isomorphic to the polynomial ring $\ZZ[\gamma]$, where $\alpha,\beta\in H_1\Omega(S^2\times S^2)$ and $\gamma\in H_2\Omega S^3$ are the images under the Hurewicz homomorphism of generators for $\pi_1\Omega (S^2\times S^2)\cong \ZZ^2$ and $\pi_2\Omega S^3\cong \ZZ$, respectively.

The Serre spectral sequence has
$$E_{p,q}^2 = H_p(\Omega(S^2\times S^2); H_q\Omega S^3) \cong  H_p\Omega(S^2\times S^2) \tensor H_q\Omega S^3,$$
since the homology groups are torsion-free and the action of the fundamental group on the homology of the fiber is trivial.
In particular, $E_{*,*}^2$ is torsion-free with Poincar\'e series
$$\frac{1}{(1-t)^2(1-t^2)}.$$
Since this agrees with the Poincar\'e series of $H_*(\Omega M;\QQ)$, the spectral sequence collapses after tensoring with $\QQ$. But since $E_{*,*}^2$ is torsion-free, this implies that the spectral sequence collapses integrally. Moreover, since $E_{p,q}^\infty = E_{p,q}^2$ is free, the extensions relating $H_{p+q}\Omega M$ and $E_{p,q}^\infty$ split, yielding an isomorphism of graded abelian groups $H_*\Omega M \cong H_*\Omega(S^2\times S^2)\tensor H_*\Omega S^3$. In particular, $H_*\Omega M$ is torsion-free. Since the maps in the fibrations are maps of loop spaces, the spectral sequence is multiplicative, and the resulting filtration on $H_*(\Omega M)$ has associated graded ring $H_*\Omega(S^2\times S^2)\tensor H_*\Omega S^3 \cong \ZZ\langle \alpha,\beta,\gamma\rangle/(\alpha\beta + \beta\alpha, \alpha\gamma -\gamma\alpha,\beta\gamma-\gamma\beta)$.
\end{proof}

We are now in position to calculate the integral Pontryagin ring $H_*\Omega M$.

\begin{theorem}
\label{thm:pontryaginring}
The map $\ZZ\langle \alpha,\beta \rangle \cong H_*\Omega (S^2\vee S^2) \to H_*\Omega M$ is surjective. The kernel is generated by the classes $[[\alpha,\beta],\alpha]$ and $[[\alpha,\beta],\beta]$ as a two-sided ideal. Thus, there is an isomorphism of graded rings,
$$H_*\Omega M \cong \ZZ\langle \alpha, \beta \rangle / \big([[\alpha,\beta],\alpha], [[\alpha,\beta],\beta] \big).$$
\end{theorem}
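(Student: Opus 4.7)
The proof splits into three steps: surjectivity, verifying the two claimed relations, and showing the resulting factored surjection is an isomorphism via a Poincar\'e series argument.

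First, I would establish surjectivity. Since $H_*\Omega(S^2\vee S^2)\cong \ZZ\langle \alpha,\beta\rangle$ by Bott-Samelson, the map in question is obtained by looping $\alpha\vee \beta\colon S^2\vee S^2\to M$. The preceding lemma furnishes a multiplicative filtration on $H_*\Omega M$ whose associated graded is $\ZZ\langle \alpha,\beta,\gamma\rangle/(\alpha\beta+\beta\alpha,\alpha\gamma-\gamma\alpha,\beta\gamma-\gamma\beta)$, with $\gamma$ the image of the generator of $H_2\Omega S^3$. By Theorem \ref{thm:homotopy groups}, the fiber inclusion $i\colon S^3\to M$ represents $\pm[\alpha,\beta]$ (Whitehead product) in $\pi_3(M)$; adjointing to $\Omega M$ and applying the Hurewicz homomorphism, which sends Samelson brackets to graded commutators, yields $\gamma = \pm(\alpha\beta+\beta\alpha)$ in $H_2\Omega M$. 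Thus $\alpha,\beta$ already generate the associated graded ring, and a filtration induction upgrades this to generation of $H_*\Omega M$ itself.

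Next, I would check that $[[\alpha,\beta],\alpha] = \beta\alpha^2 - \alpha^2\beta$ and $[[\alpha,\beta],\beta] = \alpha\beta^2 - \beta^2\alpha$ in $H_3\Omega(S^2\vee S^2)$ are killed by the map to $H_3\Omega M$. By Corollary \ref{cor:kernel}, the Whitehead products $[[\iota_1,\iota_2],\iota_1]$ and $[[\iota_1,\iota_2],\iota_2]$ generate the kernel of $\pi_4(S^2\vee S^2)\to\pi_4(M)$. Adjointing to $\Omega$ and applying the Hurewicz homomorphism turns these into the Samelson brackets above, which therefore vanish in $H_3\Omega M$. Hence the surjection factors as $A:=\ZZ\langle \alpha,\beta\rangle/\bigl([[\alpha,\beta],\alpha],[[\alpha,\beta],\beta]\bigr)\twoheadrightarrow H_*\Omega M$.

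Finally, to show $A\to H_*\Omega M$ is an isomorphism, it suffices to prove $A$ is a free $\ZZ$-module of Poincar\'e series $\frac{1}{(1-t)^2(1-t^2)}$, matching the Poincar\'e series of $H_*\Omega M$ obtained from the preceding lemma. Introducing an auxiliary generator $z$ of degree $2$, the defining relations of $A$ are equivalent to $z=\alpha\beta+\beta\alpha$ together with $[\alpha,z]=0$ and $[\beta,z]=0$; phrasing these as the rewriting rules $\alpha\beta\mapsto z-\beta\alpha$, $\alpha z\mapsto z\alpha$, $\beta z\mapsto z\beta$, I would apply Bergman's diamond lemma. The sole overlap ambiguity $\alpha\beta z$ reduces to $z^2-z\beta\alpha$ under either resolution order, so $\{z^k\beta^j\alpha^i\}_{k,i,j\geq 0}$ is a $\ZZ$-basis of $A$, giving the required Poincar\'e series. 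A surjection of graded free abelian groups of equal rank in each degree is an isomorphism.

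The main obstacle is Step 1: identifying $\gamma$ with $\pm(\alpha\beta+\beta\alpha)$ inside $H_*\Omega M$ relies on combining Theorem \ref{thm:homotopy groups}, the Whitehead-Samelson correspondence under the loop-suspension adjunction, the naturality of the Hurewicz map, and the compatibility of the multiplicative Serre filtration with these operations. Step 2 then follows by a parallel application of the same principles to the kernel computed in Corollary \ref{cor:kernel}, and Step 3 is a routine diamond-lemma verification.
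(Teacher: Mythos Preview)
Your proposal is correct and follows essentially the same three-step architecture as the paper's proof: surjectivity from the collapsed Serre spectral sequence together with the identification $\gamma=\pm[\alpha,\beta]$ coming from Theorem~\ref{thm:homotopy groups}; vanishing of the triple brackets via Corollary~\ref{cor:kernel} and naturality of Hurewicz; and finally a Poincar\'e-series comparison to upgrade the surjection to an isomorphism. The only difference is that the paper dismisses the Poincar\'e-series computation for $\ZZ\langle\alpha,\beta\rangle/I$ as a ``straightforward calculation'' (it later records the basis $\beta^k(\alpha\beta)^\ell\alpha^m$ when computing the center), whereas you supply an explicit diamond-lemma argument yielding the equivalent basis $z^k\beta^j\alpha^i$; both give $\frac{1}{(1-t)^2(1-t^2)}$.
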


\begin{proof}
As verified in Theorem \ref{thm:homotopy groups} above, the image of the generator $\pi_2\Omega S^3 \to \pi_2\Omega M$ is the Samelson product $[\alpha,\beta]$. It follows that the map $H_*\Omega S^3 \to H_*\Omega M$ sends the generator $\gamma$ to the commutator $[\alpha,\beta] = \alpha \beta +\beta \alpha$ with respect to the Pontryagin product.
As established in the previous lemma, there is a filtration of the ring $H_*\Omega M$ with associated graded isomorphic to $\ZZ\langle \alpha,\beta,\gamma\rangle/([\alpha,\beta],[\alpha,\gamma],[\beta,\gamma])$. It follows that $H_*\Omega M$ is generated by the classes $\alpha,\beta,\gamma$ under the Pontryagin product. Since $\gamma$ can be expressed as $[\alpha,\beta]$ it follows that already $\alpha$ and $\beta$ generate $H_*\Omega M$. Hence, $\ZZ\langle \alpha, \beta \rangle \cong H_*\Omega(S^2\vee S^2) \to H_*\Omega M$ is surjective.

Let $I\subset \ZZ\langle \alpha,\beta\rangle$ denote the two-sided ideal generated by $[[\alpha,\beta],\alpha]$ and $[[\alpha,\beta],\beta]$. It follows from Corollary \ref{cor:kernel} that $[[\alpha,\beta],\alpha]$ and $[[\alpha,\beta],\beta]$ map to zero in $H_*\Omega M$. There results a surjective ring homomorphism $\varphi\colon \ZZ\langle \alpha,\beta \rangle/I \to H_*\Omega M$.
A straightforward calculation shows that $\ZZ\langle \alpha,\beta\rangle / I$ is torsion-free with the same Poincar\'e series as $H_*\Omega M$. Every surjective map between finitely generated free abelian groups of the same rank is an isomorphism, so $\varphi$ must be an isomorphism.
\end{proof}

\begin{theorem}
The manifold $M$ is coformal over $\ZZ$.
\end{theorem}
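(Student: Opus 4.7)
The plan is to apply Theorem \ref{thm:coformalkoszul} together with Remark \ref{remark:PID} by endowing $H_*(M;\ZZ)$ with a Koszul $A_\infty$-coalgebra structure quasi-isomorphic to $C_*(M;\ZZ)$. Let $\bar a, \bar b, \bar x, \bar y, \bar M \in H_*(M;\ZZ)$ denote the dual basis to $a, b, x, y, M$ and assign the weight grading
$$w(\bar a) = w(\bar b) = 1, \quad w(\bar x) = w(\bar y) = 2, \quad w(\bar M) = 3.$$
A bookkeeping of weights against homological degrees shows that the only components of an $A_\infty$-coalgebra structure compatible with this grading are: the usual coproduct $\Delta_2$ dual to cup product---in particular $\Delta_2(\bar M) = \bar a\otimes\bar x + \bar x\otimes\bar a - \bar b\otimes\bar y - \bar y\otimes\bar b$ in reduced form, from $a\smile x = -b\smile y = M$---together with the triple coproducts $\Delta_3$ on $\bar x, \bar y$ with values in $\langle\bar a,\bar b\rangle^{\otimes 3}$.

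I would then set
$$\Delta_3(\bar x) = \bar a\otimes\bar a\otimes\bar b - \bar b\otimes\bar a\otimes\bar a, \quad \Delta_3(\bar y) = \bar a\otimes\bar b\otimes\bar b - \bar b\otimes\bar b\otimes\bar a$$
(adjusting signs as needed to satisfy the $n=4$ Stasheff identity on $\bar M$, the only nontrivial axiom). The Koszul dual algebra is then immediate to read off: $\Omega C(0) = T(s^{-1}\bar a, s^{-1}\bar b)$, and the cobar differentials $\delta(s^{-1}\bar x), \delta(s^{-1}\bar y)$ produce the cubic relations $[\alpha^2,\beta] = 0$ and $[\alpha,\beta^2] = 0$ (with $\alpha = s^{-1}\bar a, \beta = s^{-1}\bar b$), so $C^! \cong H_*(\Omega M;\ZZ)$ by Theorem \ref{thm:pontryaginring}.

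To verify Koszulness, by Theorem \ref{thm:twistedtensorproductkoszul}(2) it suffices to construct a contracting homotopy for the twisted tensor product $H_*(M;\ZZ)\otimes_{\kappa_C} H_*(\Omega M;\ZZ)$. I would do this by pairing basis elements of a PBW-type basis of $H_*(\Omega M;\ZZ)$ against $1, \bar a, \bar b, \bar x, \bar y, \bar M$, in the spirit of the loop-path argument used for $\CP{n}$ in the proof of Theorem \ref{complexprojectivecomputation}. This is the main technical obstacle: the non-commutativity of $H_*(\Omega M;\ZZ)$ and the cubic relations require substantially more bookkeeping than in the $\CP{n}$ case. Finally, the weight grading and $A_\infty$-axioms pin down the structure uniquely up to quasi-isomorphism (among those with nontrivial $\Delta_3$, which is forced because otherwise $M$ would be formal over $\QQ$, contradicting a direct Massey product computation), so it agrees with the transferred structure from $C_*(M;\ZZ)$. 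Theorem \ref{thm:coformalkoszul} and Remark \ref{remark:PID} then deliver coformality of $M$ over $\ZZ$.
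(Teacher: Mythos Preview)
Your construction of the $A_\infty$-coalgebra $C$ on $H_*(M;\ZZ)$, the verification that $C^! \cong H_*(\Omega M;\ZZ)$, and the Koszulness check via the twisted tensor product are all correct and in fact reproduce exactly what the paper does inside the proof of Theorem~\ref{moduledescriptiontheorem}. The gap is in your final step, the ``uniqueness'' claim.

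The assertion that the weight grading and the $A_\infty$-axioms pin down the structure is not justified, and in fact hides the entire content of the theorem. First, the structure transferred from $C_*(M;\ZZ)$ does not come equipped with your weight grading; whether it \emph{admits} one is precisely the Koszulness you are trying to establish. Second, degree bookkeeping leaves room for a nonzero $\Delta_5(\bar M)\in\langle\bar a,\bar b\rangle^{\otimes 5}\cong\ZZ^{32}$, and one checks that the Stasheff identities impose no constraint on it. Whether such a $\Delta_5$ can be killed by an $A_\infty$-isomorphism is an obstruction-theoretic question, not a formality. More to the point: your Koszul $C$ satisfies $\Omega C\simeq H_*(\Omega M;\ZZ)$, while the transferred structure satisfies $\Omega(\text{transferred})\simeq C_*(\Omega M;\ZZ)$; these two are quasi-isomorphic if and only if $C_*(\Omega M;\ZZ)$ is formal, i.e., if and only if $M$ is coformal over $\ZZ$. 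So your ``uniqueness'' step is logically equivalent to the statement being proved.

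The paper closes this gap differently. It applies Proposition~\ref{prop:integral formality} to $A=C_*(\Omega M;\ZZ)$: rational coformality of $M$ is known (the minimal model has quadratic differential), so it suffices to show the obstruction groups $HH^2(U,U)(-k)$, $k\geq 3$, are torsion-free. This is extracted from the explicit Hochschild computation of Theorem~\ref{moduledescriptiontheorem} (only $k=3$ is nonzero, and $HH^2(U,U)(-3)\cong (U/[U,U])_5$ is visibly torsion-free). Your Koszulness verification is exactly the input that makes that Hochschild computation tractable via Theorem~\ref{thm:koszulhochschild}, so your work is not wasted---but you still need the obstruction/torsion argument, not a uniqueness shortcut.
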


\begin{proof}
As pointed out in \cite[Example 2.18]{BerglundBorjeson}, the manifold $M$ is coformal over $\QQ$ because its minimal model has quadratic differential.
To show that $M$ is coformal over $\ZZ$ we will apply Proposition \ref{prop:integral formality} to $A= C_*(\Omega M;\ZZ)$.

The Hochschild cohomology of
$$U = H_*\Omega M \cong \ZZ\langle \alpha, \beta \rangle / \big([[\alpha,\beta],\alpha], [[\alpha,\beta],\beta] \big).$$
is calculated in the next section using Theorem \ref{thm:koszulhochschild} (see Theorem \ref{moduledescriptiontheorem} and Remark \ref{remark:obstruction}). The calculation shows that the only non-vanishing obstruction group is $HH^2(U,U)(-3)$, and that this is isomorphic to $\big(U/[U,U]\big)_5$. This group is easily seen to be torsion-free. It follows that $M$ is coformal over $\ZZ$.
\end{proof}

\begin{corollary}
The cohomology $H^*(M;\ZZ)$ is a Koszul $A_\infty$-algebra, weakly equivalent to $C^*(M;\ZZ)$. The generators $a,b$ have weight $-1$, the classes $x,y$ have weight $-2$ and the top class $M$ has weight $-3$. The only non-zero higher operations are
$$m_3(a,b,b) = - m_3(b,b,a) = x,$$
$$m_3(a,a,b) = - m_3(b,a,a) = y.$$
The operations $m_n$ are zero for $n\geq 4$.
\end{corollary}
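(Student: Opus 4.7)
The plan is to derive the corollary from the coformality of $M$ established in the preceding theorem. Applying Theorem \ref{thm:coformalkoszul} together with Remark \ref{remark:PID}---which apply since $H_*(M;\ZZ)$ and $H_*(\Omega M;\ZZ)$ are free of finite rank in each degree---endows $H_*(M;\ZZ)$ with a minimal Koszul $A_\infty$-coalgebra structure quasi-isomorphic to $C_*(M;\ZZ)$. Linear duality then produces a minimal Koszul $A_\infty$-algebra structure on $H^*(M;\ZZ)$ quasi-isomorphic to $C^*(M;\ZZ)$.

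To pin down the weight grading I would invoke Remark \ref{rem:presentation} together with the explicit presentation of $H_*(M;\ZZ)^! \cong H_*(\Omega M;\ZZ)$ from Theorem \ref{thm:pontryaginring}: the generators $\alpha,\beta$ of degree $1$ identify $H_2(M;\ZZ)$ with $C(1)$; the cubic relations $[[\alpha,\beta],\alpha]$, $[[\alpha,\beta],\beta]$ identify $H_5(M;\ZZ)$ with $C(2)$; and the top class $H_7(M;\ZZ)$ must sit in $C(3)$ in order that the nontrivial coproducts $\Delta_2(M^*)$ dictated by Poincar\'e duality be weight-homogeneous. Dualizing flips signs, assigning weights $-1$, $-2$, $-3$ to $a,b$, $x,y$, and $M$ as claimed.

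The vanishing of $m_n$ for $n\geq 4$ is then a bidegree check. The operation $m_n$ shifts cohomological degree by $2-n$ and weight by $n-2$, so given the bidegrees $(\mathrm{deg},\mathrm{wt}) \in \{(0,0),(2,-1),(5,-2),(7,-3)\}$ of the nonzero basis elements of $H^*(M;\ZZ)$, a short enumeration shows that no $n$-tuple of inputs yields an output whose bidegree matches any nonzero graded piece when $n\geq 4$. The same enumeration forces the only potentially nonzero restriction of $m_3$ to be $H^2 \tensor H^2 \tensor H^2 \to H^5$.

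Finally, the values of $m_3$ would be determined by dualizing $\Delta_3\colon H_5(M;\ZZ) \to H_2(M;\ZZ)^{\tensor 3}$. By Remark \ref{rem:presentation} and the cobar formula $\delta_2(s^{-1}w) = (s^{-1})^{\tensor 3}\Delta_3(w)$, the images $\Delta_3(x^*), \Delta_3(y^*)$ of a dual basis of $H_5(M;\ZZ)$ are forced, up to sign and a choice of which class corresponds to which relation, to be the two defining cubic relations $\alpha\beta^2-\beta^2\alpha$ and $\beta\alpha^2-\alpha^2\beta$ of $H_*(\Omega M;\ZZ)$. Fixing the identification by the Poincar\'e pairings $a\smile x = -b\smile y = M$ yields the asserted formulas for $m_3$. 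The one step requiring care is the sign bookkeeping in this final identification; the remainder of the argument is a direct application of the Koszul machinery developed earlier.
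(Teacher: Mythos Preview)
Your argument is correct and uses the same ingredients as the paper (Theorem \ref{thm:coformalkoszul}, Remark \ref{remark:PID}, and the presentation of $H_*(\Omega M;\ZZ)$), but the logical direction differs. The paper proceeds bottom-up: in the proof of Theorem \ref{moduledescriptiontheorem} it writes down the candidate $A_\infty$-coalgebra $C$ (and its dual $A$) with the stated $m_3$, checks directly that $\Omega C$ has homology $U$, and verifies Koszulness via the twisted tensor product $C\otimes_{\kappa_C} U$; the corollary then follows by invoking Theorem \ref{thm:coformalkoszul}. You instead argue top-down: coformality gives abstract existence of a minimal Koszul structure, and you then reconstruct the weights and operations from the presentation of $U$ together with bidegree constraints. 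Your route has the virtue of explaining \emph{why} the formulas must look as they do, whereas the paper's verification is quicker and avoids the delicate last step.

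That last step is the only place your argument is not quite complete. Knowing that $\Delta_3(x^*),\Delta_3(y^*)$ span the same $2$-dimensional relation space as $[[\alpha,\beta],\alpha]$ and $[[\alpha,\beta],\beta]$ determines them only up to a $GL_2(\ZZ)$-change of basis. To pin down the exact formulas you need one more constraint, and the natural one is the Stasheff identity in arity $4$ applied to the top class $M^*$: since the reduced part of $\Delta_2(M^*)$ is fixed by the Poincar\'e pairing, this identity forces the precise linear combination of relations that $\Delta_3(x^*)$ and $\Delta_3(y^*)$ must equal (equivalently, one checks that $\delta(s^{-1}M^*)$ lands in the ideal generated by $\delta(s^{-1}x^*),\delta(s^{-1}y^*)$ in $\Omega C$). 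You gesture at this with the phrase ``fixing the identification by the Poincar\'e pairings'', but carrying it out is exactly the sign bookkeeping you flag; the paper sidesteps it by direct verification.
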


\begin{proof}
This follows from the proof of Theorem \ref{moduledescriptiontheorem} below together with Theorem \ref{thm:coformalkoszul} and Remark \ref{remark:PID}.
\end{proof}



\subsection{Hochschild cohomology computation}
In this section we compute the Hochschild cohomology of $U=H_*(\Omega M).$ As seen above this will enable us to conclude that $M$ is coformal over the integers and thus give us a description of the free loop homology of $M.$


We will describe the Hochschild cohomology $HH^*(U,U)$ as a module over its center $\mathcal{Z}(U)$. We begin by determining $\mathcal{Z}(U)$ explicitly.

\begin{proposition}
The center of the ring $U = \ZZ\langle \alpha,\beta \rangle/([[\alpha,\beta],\alpha], [[\alpha,\beta],\beta])$ is isomorphic to the polynomial ring,
$$\mathcal{Z}(U) \cong \ZZ[t_1,t_2,t_3],\quad |t_i| = 2,$$
generated by the three elements
$$t_1 = \alpha^2,\quad t_2= \beta^2,\quad t_3 = [\alpha,\beta].$$
\end{proposition}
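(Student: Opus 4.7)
The plan is to exhibit an explicit PBW-style basis for $U$ and then characterize centrality combinatorially in that basis. First I would check that the three candidates are central. The class $t_3 = \alpha\beta+\beta\alpha$ is central by the defining relations of $U$: in the ungraded associative algebra they read $t_3\alpha = \alpha t_3$ and $t_3\beta = \beta t_3$. Using $\beta\alpha = t_3 - \alpha\beta$ together with $\alpha t_3 = t_3\alpha$, a short computation gives $\beta\alpha^2 = (t_3-\alpha\beta)\alpha = \alpha t_3 - \alpha\beta\alpha = \alpha t_3 - \alpha(t_3-\alpha\beta) = \alpha^2\beta$, so $t_1 = \alpha^2$ is central; by the analogous calculation $t_2=\beta^2$ is central as well. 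Since central elements commute among themselves, we obtain a ring homomorphism $\ZZ[t_1,t_2,t_3] \to \mathcal{Z}(U)$.

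Next I would establish a PBW-style basis
$$\mathcal{B} = \{\, t_3^k\alpha^i\beta^j : k,i,j\geq 0\,\}$$
for $U$ over $\ZZ$. The rewriting rule $\beta\alpha \mapsto t_3 - \alpha\beta$ strictly decreases the number of $\beta\alpha$-inversions in any word in $\alpha,\beta$, so repeated application terminates and shows $\mathcal{B}$ spans $U$. Comparing the generating series of $\mathcal{B}$ with the known Poincar\'e series $\frac{1}{(1-t)^2(1-t^2)}$ of Theorem~\ref{thm:pontryaginring} (the two match in each degree) gives linear independence, so $\mathcal{B}$ is a $\ZZ$-basis.

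To expand $z\alpha$ and $z\beta$ in this basis for a general $z=\sum a_{k,i,j}t_3^k\alpha^i\beta^j$, I would derive by induction the commutation formulas
\begin{align*}
\beta^j\alpha &= (-1)^j\alpha\beta^j + \epsilon_j\, t_3\beta^{j-1},\\
\beta\alpha^i &= (-1)^i\alpha^i\beta - (-1)^i\epsilon_i\, t_3\alpha^{i-1},
\end{align*}
where $\epsilon_n = 1$ for $n$ odd and $\epsilon_n = 0$ for $n$ even; these follow by induction from the identities $\beta^2\alpha = \alpha\beta^2$ and $\alpha^2\beta = \beta\alpha^2$ that drop out of the centrality computation in Step~1. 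Substituting into $z\alpha - \alpha z$ and reading off the coefficient of $t_3^K\alpha^{I+1}\beta^J$ with $J$ odd yields the relation $-2a_{K,I,J} = 0$; since $\ZZ$ is torsion-free this forces $a_{k,i,j} = 0$ whenever $j$ is odd. A parallel expansion of $z\beta - \beta z$, using the second formula above, forces $a_{k,i,j} = 0$ whenever $i$ is odd.

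Thus every central $z$ is supported on the sub-basis $\{t_3^k\alpha^{2i}\beta^{2j}\} = \{t_3^k t_1^i t_2^j\}$, which is precisely the image of $\ZZ[t_1,t_2,t_3]$. Since these particular monomials sit inside $\mathcal{B}$, they are $\ZZ$-linearly independent, so the map $\ZZ[t_1,t_2,t_3] \to \mathcal{Z}(U)$ is both surjective (every central element lifts to a polynomial) and injective (the three generators are algebraically independent over $\ZZ$), hence a ring isomorphism. I expect the main technical obstacle to be the index bookkeeping in the third step: one must carefully collect the contributions from both the $(-1)^j\alpha\beta^j$ principal term and the $\epsilon_j t_3\beta^{j-1}$ correction term when expanding $z\alpha-\alpha z$ in the basis $\mathcal{B}$, and verify that they do not conspire to cancel the $-2a_{K,I,J}$ coefficient that drives the parity conclusion.
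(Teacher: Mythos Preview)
Your argument is correct and follows the same overall strategy as the paper: exhibit an explicit $\ZZ$-basis of $U$ and then characterize centrality by expanding the commutators with $\alpha$ and $\beta$ in that basis. The difference is in the choice of basis. The paper works with the monomial basis $\{\beta^k(\alpha\beta)^\ell\alpha^m\}$ and finds that the center is additively spanned by $\alpha^{2p}\big((\alpha\beta)^q+(\beta\alpha)^q\big)\beta^{2r}$; one then has to recognize these power sums as polynomials in $t_3=\alpha\beta+\beta\alpha$ and $t_1t_2=(\alpha\beta)(\beta\alpha)$ (via Newton-type identities) to conclude. Your basis $\{t_3^k\alpha^i\beta^j\}$ builds the central element $t_3$ in from the start, so the parity argument lands directly on the monomials $t_3^k t_1^{\,p} t_2^{\,q}$ with no further rewriting required; this is a genuine simplification of the endgame. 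Your stated worry about the correction term is unfounded: when $J$ is odd the shifted index $J+1$ is even, so $\epsilon_{J+1}=0$ and the $t_3\beta^{j-1}$ term contributes nothing to the coefficient of $t_3^K\alpha^{I+1}\beta^J$, leaving exactly the clean $-2a_{K,I,J}$ you claim.
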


\begin{proof}
To determine $\mathcal{Z}(U)$ we first note that $U$ have an additive basis given by elements $\beta^k(\alpha\beta)^\ell\alpha^m.$ Being in the center is equivalent to commuting with $\alpha$ and $\beta.$ Writing out the commutators we see that $\mathcal{Z}(U)$ has an additive basis given by elements $\alpha^{2p}((\alpha\beta)^q+(\beta\alpha)^q)\beta^{2r}.$ Thus $\mathcal{Z}(U)$ is generated freely as a commutative algebra by $\alpha^2,\beta^2$ and $[\alpha,\beta] =\alpha\beta+\beta\alpha.$
\end{proof}

Next, we determine the Hochschild cohomology as a module over $\mathcal{Z}(U)$.

\begin{theorem}
\label{moduledescriptiontheorem}

The Hochschild cohomology of $U$ is a module over $\mathcal{Z}(U).$

In weight $0$ the Hochschild cohomology is isomorphic as a $\mathcal{Z}(U)$-module to
$$\mathcal{Z}(U).$$

In weight $-1$ the Hochschild cohomology is isomorphic as a $\mathcal{Z}(U)$-module to
$$\mathcal{Z}(U)\{e_1,e_2,e_3,e_4,e_5,e_6\}/(2t_1e_1+t_3e_2,2t_2e_2+t_3e_1,t_3e_3-2t_1e_4+2t_2e_5-t_3e_6),$$ where $|e_1|=|e_2|=-2, |e_3|=|e_4|=|e_5|=|e_6|=-1.$ 

In weight $-2$ the Hochschild cohomology is isomorphic as a $\mathcal{Z}(U)$-module to
$$\mathcal{Z}(U)\{f_1,f_2,f_3,f_4,f_5,f_6\}/((4t_1t_2-t_3^2)f_1,(4t_1t_2-t_3^2)f_2,t_1t_2f_3+t_3f_4-t_2f_5-t_1f_6),$$ where $|f_1|=|f_2|=-5,|f_3|=-4,|f_4|=|f_5|=|f_6|=-2.$

In weight $-3$ the Hochschild cohomology is isomorphic to $s^{-7}U/[U,U]$ which as a $\mathcal{Z}(U)$-module is isomorphic to
$$\mathcal{Z}(U)\{g_1,g_2,g_3,g_4\}/(2t_1g_1,2t_2g_1,t_3g_1,t_3g_2-2t_1g_3,t_3g_3-2t_2g_2),$$ where $|g_1|=-7,|g_2|=-6,|g_3|=-6,|g_4|=-5.$

\end{theorem}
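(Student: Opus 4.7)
The plan is to realize $HH^*(U,U)$ as the homology of the small twisted convolution complex provided by Theorem \ref{thm:koszulhochschild}. By the corollary immediately preceding this theorem, $C = H_*(M;\ZZ)$ carries an explicit minimal Koszul $A_\infty$-coalgebra structure (dual to the one on $H^*(M;\ZZ)$ displayed there), whose Koszul dual algebra is $U$. Theorem \ref{thm:koszulhochschild}(2) then identifies $HH^*(U,U)$ with the homology of $\Hom^{\kappa_C}(C,U)$. This is a free $U$-module of total rank $6$, split by the $C$-weight into four pieces that become the weight $0$, $-1$, $-2$ and $-3$ components of $HH^*(U,U)$. The $\mathcal{Z}(U)$-module structure is the standard one, induced by the identification $\mathcal{Z}(U) = HH^0(U,U)$ acting through the Yoneda product.

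First I would write down the twisted differential $d^{\kappa_C}$ on the dual basis $\{1^\vee, a^\vee, b^\vee, x^\vee, y^\vee, M^\vee\}$ of $C$. The binary part is dual to the cup product on $H^*(M;\ZZ)$ and produces the familiar graded commutators $[\alpha,-]$, $[\beta,-]$ on the $U$-valued entries. The ternary part is dual to the operation $m_3$ from the corollary; by inspection it is supported only on the duals of $x$ and $y$ and contributes a single extra term multiplying by $[\alpha,\beta] = \alpha\beta + \beta\alpha$. Since $m_n = 0$ for $n\geq 4$, no higher corrections appear, and the resulting complex consists of just four columns.

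Next I would treat each weight separately. In weight $0$ the complex is simply $U$ mapped into $U^2$ by $u \mapsto ([\alpha,u], [\beta,u])$, whose kernel is by definition $\mathcal{Z}(U)$, giving the first assertion. In weights $-1$ and $-2$ the problem reduces to computing kernel modulo image of two consecutive $U$-linear maps built from $[\alpha,-]$, $[\beta,-]$ and $(\alpha\beta + \beta\alpha)\cdot(-)$; the listed relations such as $2t_1 e_1 + t_3 e_2$ arise precisely as the images of these differentials evaluated on basis elements, and one checks using the additive basis of $U$ recorded in Theorem \ref{thm:pontryaginring} that these generate all syzygies. In weight $-3$ the complex terminates at $\Hom(\ZZ\cdot M^\vee, U) \cong s^{-7}U$, so the homology equals the cokernel of the last nontrivial map; using the explicit form of $\Delta_2(M^\vee)$, this cokernel is immediately identified with $s^{-7}U/[U,U]$, and the stated presentation over $\mathcal{Z}(U)$ then follows by reducing each additive basis element of $U$ modulo commutators.

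The main obstacle will be the weight $-1$ and $-2$ pieces, where several nontrivial $\mathcal{Z}(U)$-module syzygies must be matched exactly and one must rule out any unexpected integral torsion. The cleanest way to handle this is first to carry out the computation after tensoring with $\QQ$, where the Milnor-Moore description $U \otimes \QQ = UL$ for the rational homotopy Lie algebra $L$ makes the center and the centralizers completely explicit, and then to upgrade to $\ZZ$ using torsion-freeness of $U$ together with straightforward integral linear algebra over $\mathcal{Z}(U) = \ZZ[t_1,t_2,t_3]$.
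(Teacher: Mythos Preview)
Your overall strategy---realize $HH^*(U,U)$ as the homology of $\Hom^{\kappa_C}(C,U)$ via Theorem \ref{thm:koszulhochschild}(2), split by $C$-weight into four columns, and read off the weight $0$ and $-3$ pieces as $\mathcal{Z}(U)$ and $s^{-7}U/[U,U]$---is exactly the paper's. But there is a genuine logical gap at the very first step. You invoke ``the corollary immediately preceding this theorem'' to supply the Koszul $A_\infty$-coalgebra structure on $C$ with Koszul dual $U$. In the paper's logical order that corollary is \emph{deduced from} the proof of Theorem \ref{moduledescriptiontheorem} (its proof reads ``This follows from the proof of Theorem \ref{moduledescriptiontheorem} below\ldots''), and indeed the coformality of $M$ over $\ZZ$ also rests on Remark \ref{remark:obstruction}, which again sits inside this computation. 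So you cannot take either the Koszulness of $C$ or the identification $C^{!}\cong U$ as given. What the paper actually does---and what your proof must do---is \emph{define} $C$ explicitly with its $\Delta_2$ and $\Delta_3$, write down $\Omega C$ and check $H_*(\Omega C)\cong U$, and then verify Koszulness directly by showing the twisted tensor product $C\otimes_{\kappa_C}U$ is contractible (Theorem \ref{thm:twistedtensorproductkoszul}). Only after that is Theorem \ref{thm:koszulhochschild}(2) available.

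Two smaller points. First, your description of the ternary contribution to $d^{\kappa_C}$ as ``a single extra term multiplying by $[\alpha,\beta]$'' is not quite right: on the weight $-1$ generators the paper obtains $\partial_\kappa(a\otimes u)=-y\otimes[\beta,[\alpha,u]]$ and $\partial_\kappa(b\otimes u)=x\otimes[\alpha,[\beta,u]]$, i.e.\ nested commutators rather than multiplication by $t_3$; getting these formulas exactly right is what produces the relations $(4t_1t_2-t_3^2)f_i$ in weight $-2$. Second, for the weight $-1$ and $-2$ pieces the paper avoids your ``compute over $\QQ$, then upgrade'' strategy entirely: it uses that $U$ is a \emph{free} $\mathcal{Z}(U)$-module of rank $4$ on $1,\alpha,\beta,\alpha\beta$, so that $A\otimes U$ is free of rank $24$ over $\mathcal{Z}(U)=\ZZ[t_1,t_2,t_3]$ and the differential becomes a finite matrix over this polynomial ring. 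This makes the kernel/image analysis (and the absence of integral torsion) a direct computation rather than a two-step rational-then-integral argument, and the listed relations drop out as the rows of that matrix.
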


\begin{proof}

Let $C$ be the $A_\infty$-coalgebra defined as follows. As a $\mathbb{Z}$-module $$C \cong\mathbb{Z}\{1,a^*,b^*,x^*,y^*,M^*\}$$ where $|1|=0,|a^*|=|b^*|=2,|x^*|=|y^*|=5, |M^*|=7.$ The structure is determined by 

\begin{align*}
&\begin{aligned}
\Delta_2(a^*)&=1\otimes a^*+a^*\otimes 1, &\Delta_2(b^*)&=1\otimes b^*+b^*\otimes 1,\\
\Delta_2(x^*)&=1\otimes x^*+x^*\otimes 1, &\Delta_2(y^*)&=1\otimes y^*+y^*\otimes 1,\\
\end{aligned} \\
&\Delta_2(M^*)=a^*\otimes x^* + x^*\otimes a^* -b^*\otimes y^* - y^*\otimes b^*+1\otimes M^*+M^*\otimes 1,\\
&\begin{aligned}
\Delta_3(a^*)&=0, &\Delta_3(b^*)&=0, &\Delta_3(M^*)=0, \\
\end{aligned} \\
&\Delta_3(x^*)=a^*\otimes b^*\otimes b^*- b^*\otimes b^*\otimes a^*,\\
&\Delta_3(y^*)=a^*\otimes a^*\otimes b^*- b^*\otimes a^*\otimes a^*,
\end{align*} 

with the differential and all higher maps zero. There is a weight grading on $C$ such that $w(1)=0,$ $w(a^*)=w(b^*)=1$ $w(x^*)=w(y^*)=2$ and $w(M^*)=3.$ It is convenient to work with the linear dual $A_\infty$-algebra $A$ as well.
As a $\mathbb{Z}$-module,  $$A\cong\mathbb{Z}\{1,a,b,x,y,M\},$$ where the weights and degrees are inverted compared to $C.$ The structure maps are determined by 
\begin{align*}
m_2(a,x)&=m_2(x,a)=-m_2(b,y)=-m_2(y,b)=M,\\
m_2(a,y)&=m_2(y,a)=m_2(b,x)=m_2(x,b)=0,\\
m_3(a,a,b)&=-m_3(b,a,a)=y,\\
m_3(a,b,b)&=-m_3(b,b,a)=x,\\
m_3(a,a,a)&=m_3(a,b,a)=m_3(b,a,b)=m_3(b,b,b)=0,
\end{align*}
together with $m_3$ being zero if at least one argument is proportional to the unit and all other $m_i:$s being zero. 

If one writes down $\Omega C$ explicitly, one sees that it is described by $$
\big(\ZZ\langle \alpha,\beta,\xi,\zeta,\omega \rangle, \delta \big),
$$
with differential determined by $\delta \alpha = \delta \beta = 0$ and
$$
\delta \xi = [[\alpha,\beta],\beta], \quad \delta \zeta = [[\alpha,\beta],\alpha], \quad \delta \omega  = [\alpha,\xi] + [\beta,\zeta].
$$ 
One can now easily observe that $H_*(\Omega C)$ coincides with $U,$ the Pontryagin ring calculated in Theorem \ref{thm:pontryaginring}. We have the twisting morphism $\kappa_C:C\rightarrow U$ given by sending $a^*$ and $b^*$ to $\alpha$ and $\beta$ respectively. The twisted tensor product $C\otimes_{\kappa_C} U$ has differential described as follows.
\begin{equation*}
\begin{tabular}{ l c l  l c l}
$d_{\kappa}(1\otimes u)$& $=$& $0\hspace{2.5cm}$ &$d_{\kappa}(x^*\otimes u)$& $=$& $a^*\otimes\beta^2 u -b^*\otimes \beta \alpha u$ \\
$d_{\kappa}(a^*\otimes u)$& $=$& $1\otimes \alpha u$ &$d_{\kappa}(y^*\otimes u)$& $=$& $a^*\otimes \alpha\beta u-b^*\otimes \alpha^2 u$ \\
$d_{\kappa}(b^*\otimes u)$& $=$& $1\otimes \beta u$ &$d_{\kappa}(M^*\otimes u)$& $=$& $x^*\otimes \alpha u-y^*\otimes \beta u$ \\
\end{tabular}
\end{equation*} 
This is easily seen to be contractible, thus by Theorem \ref{thm:twistedtensorproductkoszul}, $C$ is a Koszul $A_\infty$-coalgebra. By Theorem \ref{thm:koszulhochschild}, the Hochschild cohomology of $U$ is given by the homology of $\Hom^{\kappa_C}(C,U).$ As a graded abelian group we have $\Hom^{\kappa_C}(C,U)\cong A\otimes U.$ Twisting $\Hom(C,A)$ with ${\kappa_C}$ is equivalent to twisting the tensor product of the $A_\infty$-algebras $A$ and $U$ with the element $\kappa=a\otimes\alpha+b\otimes\beta.$ The twisted differential $\mu_1^\kappa=\partial_\kappa$ act on generators by 
\begin{equation*}
\begin{tabular}{ l c l  l c l}
$\partial_\kappa(1\otimes u)$& $=$& $a\otimes[\alpha,u]+b\otimes[\beta,u]\hspace{1.5cm}$ &$\partial_\kappa(x\otimes u)$& $=$& $-M\otimes[\alpha,u]$ \\
$\partial_\kappa(a\otimes u)$& $=$& $-y\otimes[\beta,[\alpha,u]]$ &$\partial_\kappa(y\otimes u)$& $=$& $M\otimes[\beta,u]$ \\
$\partial_\kappa(b\otimes u)$& $=$& $x\otimes [\alpha,[\beta,u]]$ &$\partial_\kappa(M\otimes u)$& $=$& $0$ \\
\end{tabular} 
\end{equation*}
where the brackets are graded commutators. The differential respects the $\mathcal{Z}(U)$-module structure since elements of $\mathcal{Z}(U)$ pass through commutators, so the homology will be a module over $\mathcal{Z}(U).$ Here we already see that in weight $0,$ the Hochschild cohomology is isomorphic to $\mathcal{Z}(U)$ and in weight $3$ it is isomorphic to $s^{-7}U/[U,U].$

Now we want to describe $U$ as a $\mathcal{Z}(U)$-module. 
The complex $A\otimes U$ twisted with $\kappa$ is a free $\mathcal{Z}(U)$-module of rank $24.$ The following table describes the differential $\partial_\kappa=\mu^\kappa_1$. An element in the table is the differential of the element in the first column tensor the element in the top row.

\begin{tabular}{l | l  l  l  l }
 $\partial_\kappa=\mu^\kappa_1 \quad $             & 1 & $\alpha$ & $\beta$ & $\alpha\beta$   \\
\hline 
1 & 0 & $2\alpha^2a\otimes 1$ & $[\alpha,\beta]a\otimes 1$ &$2\alpha^2a\otimes \beta-[\alpha,\beta]a\otimes\alpha$    \\
   & &$+ [\alpha,\beta]b\otimes 1$ & $+2\beta^2b\otimes 1$ & $+[\alpha,\beta]b\otimes \beta-2\beta^2b\otimes\alpha$  \\
a  \rule{0pt}{2ex}  & 0 & 0 & 0 & $([\alpha,\beta]^2-4\alpha^2\beta^2)y\otimes 1$   \\
b   \rule{0pt}{2ex}& 0 & 0 & 0 & $([\alpha,\beta]^2-4\alpha^2\beta^2)x\otimes 1$  \\
x   \rule{0pt}{2ex}& 0 & $-2\alpha^2M\otimes 1$ &$ -[\alpha,\beta]M\otimes 1$ & $-2\alpha^2M\otimes \beta+[\alpha,\beta]M\otimes\alpha$  \\
y   \rule{0pt}{2ex}& 0 &  $[\alpha,\beta]M\otimes 1$ & $2\beta^2M\otimes 1$ & $[\alpha,\beta]M\otimes\beta-2\beta^2M\otimes\alpha$   \\
M  \rule{0pt}{2ex}& 0 & 0 & 0 & 0  \\
\end{tabular}
\\
From this table we see that the kernel is a weight graded $\mathcal{Z}(U)$-module with the following description.

In weight $0$ the kernel is generated by $1\otimes 1.$

In weight $-1$ the kernel is generated by the six elements $$a\otimes 1,b\otimes 1,a\otimes \alpha, a\otimes\beta,b\otimes \alpha,b\otimes\beta.$$

In weight $-2$ the kernel is generated by the $6$ elements $$x\otimes 1,y\otimes 1,x\otimes \beta+y\otimes \alpha,\beta^2x\otimes \alpha+\alpha^2y\otimes\beta,2\alpha^2y\otimes\alpha+[\alpha,\beta]x\otimes\alpha,2\beta^2x\otimes\beta+[\alpha,\beta]y\otimes\beta$$ with the relation $$\alpha^2\beta^2(x\otimes \beta+y\otimes \alpha)+[\alpha,\beta](\beta^2x\otimes \alpha+\alpha^2y\otimes\beta)-\alpha^2(2\beta^2x\otimes\beta+[\alpha,\beta]y\otimes\beta)$$ $$-\beta^2(2\alpha^2y\otimes\alpha+[\alpha,\beta]x\otimes\alpha)=0.$$

In weight $-3$ the kernel is generated by the $4$ elements
$$M\otimes 1,M\otimes \alpha,M\otimes \beta,M\otimes\alpha\beta.$$
Except in weight $-2,$ these are immediate. There we have to check which linear combinations can vanish in $M\otimes 1$ and we see that we have the elements $x\otimes1,y\otimes1,x\otimes \beta+y\otimes \alpha,\beta^2x\otimes \alpha+\alpha^2y\otimes\beta,2\alpha^2y\otimes\alpha+[\alpha,\beta]x\otimes\alpha,2\beta^2x\otimes\beta+[\alpha,\beta]y\otimes\beta$ span this part of the kernel. These are however not linearly independent but satisfy the identity $\alpha^2\beta^2(x\otimes \beta+y\otimes \alpha)+[\alpha,\beta](\beta^2x\otimes \alpha+\alpha^2y\otimes\beta)-\alpha^2(2\beta^2x\otimes\beta+[\alpha,\beta]y\otimes\beta)-\beta^2(2\alpha^2y\otimes\alpha+[\alpha,\beta]x\otimes\alpha)=0.$ 

Now determining the $\mathcal{Z}(U)$-module description of the homology is a matter of comparing the description of the kernel and the description of the image. We rename our generators as follows to get the presentation in the theorem.
$$e_1=a\otimes 1, e_2=b\otimes1,e_3=a\otimes \alpha, e_4=a\otimes \beta, e_5=b\otimes\alpha,e_6=b\otimes\beta$$
$$f_1=x\otimes 1,f_2=y\otimes 1,f_3=x\otimes \beta+y\otimes \alpha,f_4=\beta^2x\otimes \alpha+\alpha^2y\otimes\beta$$
$$f_5=2\alpha^2y\otimes\alpha+[\alpha,\beta]x\otimes\alpha,f_6=2\beta^2x\otimes\beta+[\alpha,\beta]y\otimes\beta$$
$$g_1=M\otimes 1,g_2=M\otimes \alpha, g_3=M\otimes\beta,g_4=M\otimes \alpha\beta$$

\end{proof}

\begin{remark}
\label{remark:obstruction}
Note that in weight $-3,$ the Hochschild cohomology is torsion free in even homological degrees. This is since the relations imposed by the differentials are all of the form $\alpha u= u\alpha$ and $\beta u= u \beta$ without any minus signs. This is useful in our discussion of coformality, in particular, the obstruction group $HH^2(U,U)(-3)$ is torsion free.
\end{remark}

Finally, we compute the algebra structure on the Hochschild cohomology.
\begin{theorem}
\label{thm:7manifoldhochschild}
The Hochschild cohomology $HH^*(U,U)$ is an algebra over $\mathcal{Z}(U)$. A presentation is given by the free graded commutative algebra over $Z(U)$ on generators $e_1,e_2,e_3,e_4,e_5,e_6,f_1,f_2,f_3$ and $f_4$ of degrees $|e_1|=|e_2|=-2, |e_3|=|e_4|=|e_5|=|e_6|=-1,|f_1|=|f_2|=-5,|f_3|=-4,|f_3|=-2,$ with relations imposed as follows. Firstly, all generators square to zero. Secondly, we have the relations

\begin{tabular}{ l  l  l  }
$2t_1e_1+t_3e_2=0$ & $t_3e_1+2t_2e_2=0$ & $t_3e_3+2t_2e_5=2t_1e_4+t_3e_6$ \\ $t_3^2f_1=4t_1t_2f_1$ & $t_3^2f_2=4t_1t_2f_2$ & $2t_1e_1f_1=0$ \\ 
$2t_2e_1f_1=0$ & $t_3e_1f_1=0$ & $t_3e_3f_1=2t_1e_4f_1$ \\ 
$2t_2e_3f_1=t_3e_4f_1$ &$t_1t_2f_3+t_3f_4=t_2e_3e_5+t_1e_4e_6$ & \\

\\

$e_1e_2=0$ & $e_1e_3+t_3f_2=0$ & $e_1e_4+2t_2f_2=0$ \\
$e_1e_5+t_3f_1=0$ & $e_1e_6+2t_2f_1=0$ & $e_2e_3=2t_1f_2$ \\
$e_2e_4=t_3f_2$ & $e_2e_5=2t_1f_1$ & $e_2e_6=t_3f_1$  \\
$e_3e_4=2t_2f_3-e_4e_6$ & $e_3e_5=2t_1f_3-e_5e_6$ & $e_3e_6=2f_4$ \\
$e_4e_5=t_3f_3$ & $e_1f_1+e_2f_2=0$ &  $e_1f_2=0$\\
$e_1f_3+e_4f_1=0$ &  $e_1f_4+t_2e_3f_1=0$& $e_2f_1=0$ \\
$e_2f_3+e_5f_2=0$&  $e_2f_4=t_1e_4f_1$& $e_3f_1+e_5f_2=0$ \\
$e_3f_2=0$ & $e_3f_3=e_6f_3$ & $e_3f_4+t_1t_2e_1f_1=0$ \\
$e_4f_1+e_6f_2=0$ & $e_4f_2=0$ & $e_4f_3+t_2e_1f_1=0$ \\
$e_4f_4+t_2e_3f_3=0$ & $e_5f_1=0$ & $e_5f_3=t_1e_1f_1$ \\
$e_5f_4+t_1e_3f_3=0$ & $e_6f_1=0$ & $e_6f_4=t_1t_2e_1f_1$ \\
$f_1f_2=0$ & $f_1f_3=0$ & $f_1f_4=0$ \\
$f_2f_3=0$ & $f_2f_4=0$ & $f_3f_4=0.$  \\
\\
\end{tabular}
Thirdly, the product of any three generators not all of the form $e_i$ is zero and the product of any four generators is also zero. 
\end{theorem}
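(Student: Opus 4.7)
My plan is to perform the entire computation on the small chain-level model provided by Theorem \ref{thm:koszulhochschild}, namely $\Hom^{\kappa_C}(C,U) \cong A \otimes U$ with the tensor $A_\infty$-algebra structure twisted by $\kappa = a\otimes\alpha + b\otimes\beta$, exactly as set up in the proof of Theorem \ref{moduledescriptiontheorem}. Since $A$ has only $m_2$ and $m_3$ nonzero, the $A_\infty$-structure on $A\otimes U$ has $m_n = 0$ for $n\geq 4$, and the twisted product $\mu_2^\kappa$ reduces to the finite sum
\[
\mu_2^\kappa(X,Y) = m_2(X,Y) + m_3(\kappa,X,Y) \pm m_3(X,\kappa,Y) \pm m_3(X,Y,\kappa),
\]
with signs as in Section~\ref{sec:twisted}. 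This is a genuinely finite recipe that never involves higher nested insertions of $\kappa$, so each product of generators is computable in closed form.

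The first step is to lift the $\mathcal{Z}(U)$-module generators $e_1,\dots,e_6$, $f_1,\dots,f_6$, $g_1,\dots,g_4$ to the explicit cycle representatives recorded in the proof of Theorem \ref{moduledescriptiontheorem} (for instance $e_1 = a\otimes 1$, $f_3 = x\otimes\beta + y\otimes\alpha$, $g_4 = M\otimes\alpha\beta$, etc.). The second step is to evaluate $\mu_2^\kappa$ on each pair of representatives, using the multiplication table for $A$ (where the nonzero $m_2$ outputs are $M$ and the nonzero $m_3$ outputs are $x,y$) together with the Pontryagin product on $U$. The $m_2$ contribution alone already produces the untwisted term (e.g.\ $e_2 e_5 = (b\otimes 1)(b\otimes\alpha)$ gives zero in $A$-direction, while $(a\otimes 1)(b\otimes\alpha) = a\cdot b \otimes \alpha = 0$ since $m_2(a,b)=0$ in $A$), and the $m_3$ correction terms produce the quadratic expressions in $t_1,t_2,t_3$ via $m_3(a,b,b)=x$, $m_3(a,a,b)=y$, etc., composed with the insertion of $\alpha$ or $\beta$ from $\kappa$. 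Every relation in the first two tables is then obtained by rewriting the resulting cycle as a $\mathcal{Z}(U)$-linear combination of the chosen representatives, modulo boundaries of the twisted differential $\partial_\kappa$ already computed in Theorem \ref{moduledescriptiontheorem}; the module relations in that theorem get used repeatedly to put the outcome in normal form.

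For the vanishing assertions at the end, I would argue by weight. Products of generators land in a weight that is the sum of their weights, so any four-fold product of generators (each of weight $\leq -1$) has weight $\leq -4$, which lies below the weight range $0,-1,-2,-3$ that supports $HH^*(U,U)$; hence all four-fold products are forced to vanish. For the claim that triple products involving an $f_i$ (equivalently, any triple product not of three $e_j$'s) vanish, the same weight count places the output in weight exactly $-4$ or lower, again outside the support. The remaining triple products $e_ie_je_k$ live in weight $-3$ and must be computed by hand, using associativity together with the already-established quadratic relations $e_ie_j = \cdots$ to reduce to $\mathcal{Z}(U)\{g_1,\dots,g_4\}$.

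The principal obstacle is purely bookkeeping: there are around forty relations, each requiring an independent cycle computation with careful sign tracking in $\mu_2^\kappa$ and careful reduction to normal form modulo the $\mathcal{Z}(U)$-module relations of Theorem \ref{moduledescriptiontheorem}. A subtler point is that a few products, notably the $e_3e_4, e_3e_5, f_3$-type relations, mix $m_2$ and $m_3$ contributions in such a way that both summands are individually nonzero and only their combination projects onto a generator; these cases must be checked by expanding the $m_3$ terms completely before identifying the result.
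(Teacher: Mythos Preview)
Your plan is correct and follows essentially the same route as the paper: compute $\mu_2^\kappa$ on the explicit cycle representatives from Theorem \ref{moduledescriptiontheorem}, reduce the outputs against the $\mathcal{Z}(U)$-module relations and boundaries there, and check that no new relations appear among triple $e$-products. Your weight-count justification for the vanishing of four-fold products and of triple products involving an $f_i$ is a clean way to make precise what the paper phrases as ``it is easy to see''; otherwise the arguments coincide, including the final reduction of $e_ie_je_k$ via the quadratic relations and associativity in homology.
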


\begin{proof}
From the definition of twisted $A_\infty$-algebra we obtain the following formulas. Together with the fact that $1\otimes 1$ acts as the identity and $M\otimes u$ is zero when multiplied with anything other than the identity they determine the multiplication.
\begin{equation*}
\begin{tabular}{ l c l }
$\mu^{\kappa}_2(a\otimes u_1,a\otimes u_2)$& $=$& $-y\otimes [\beta,u_1u_2]$  \\
$\mu^{\kappa}_2(b\otimes u_1,b\otimes u_2)$& $=$& $x\otimes[\alpha,u_1u_2]$ \\
$\mu^{\kappa}_2(a\otimes u_1,b\otimes u_2)$& $=$& $y\otimes [\alpha,u_1]u_2-(-1)^{|u_1|}x\otimes u_1[\beta,u_2]$  \\
$\mu^{\kappa}_2(x\otimes u_1,a\otimes u_2)$& $=$& $M\otimes u_1u_2$  \\
$\mu^{\kappa}_2(x\otimes u_1,b\otimes u_2)$& $=$& $0$  \\
$\mu^{\kappa}_2(y\otimes u_1,a\otimes u_2)$& $=$& $0$  \\
$\mu^{\kappa}_2(y\otimes u_1,b\otimes u_2)$& $=$& $-M\otimes u_1u_2$  \\
\end{tabular} 
\end{equation*}
Note that $\mu^{\kappa}_2$ is not associative on the nose but induces a commutative associative multiplication in homology. We see that $\mu^{\kappa}_2$ respects the $\mathcal{Z}(U)$-module structure so the homology will be an algebra over $\mathcal{Z}(U).$ 
We will use the notation given to the elements in the proof of Theorem \ref{moduledescriptiontheorem}.
Note that we have $e_1f_1=-e_2f_2=g_1,e_2f_3=e_3f_1=-e_5f_2=g_2, e_1f_3=-e_4f_1=e_6f_2=g_3,e_3f_3=e_6f_3=-g_4,e_3e_5=f_5$ and $e_4e_6=f_6.$ The other additive generators are primitive so we see that $e_1,e_2,e_3,e_4,e_5,e_6,f_1,f_2,f_3$ and $f_4$ generate the homology as a graded commutative algebra. The relations imposed come from two different sources. The first set comes from the $\mathcal{Z}(U)$-description in Theorem \ref{moduledescriptiontheorem}. The second set comes from writing out the products of all pairs of generators and comparing them.

There might be more relations coming from looking at products of three generators. It is easy to see that multiplying three generators gives zero unless all three are of the form $e_i.$ Since the generators square to zero we see that all the potentially non-zero such products are products of different generators. 

Now any such potentially nonzero triple reduced to a a scalar times a multiplication of two generators. Since all relations between such have been exhausted by the relations already written down we do not need to impose any more relations for these. 

Finally, it is easy to see that multiplying any four generators gives zero.
\end{proof}

\begin{corollary}
With the above description there is an algebra isomorphism $$H_{*+7}(LM)\cong HH^*(U,U).$$
\end{corollary}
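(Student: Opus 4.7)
The plan is to assemble the corollary from four results already established in the paper. First, by Theorem \ref{stringtopologyhochschildtheorem} applied to the $7$-dimensional simply connected manifold $M$ with $\kk=\ZZ$, we get an isomorphism of graded rings
$$H_{*+7}(LM;\ZZ) \cong HH^*(C_*(\Omega M;\ZZ),C_*(\Omega M;\ZZ)),$$
where the left-hand side carries the Chas-Sullivan loop product.

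Next, I would use the fact that $M$ has been shown to be coformal over $\ZZ$, together with Theorem \ref{thm:pontryaginring} which establishes that $U = H_*(\Omega M;\ZZ)$ is torsion-free (hence free as a $\ZZ$-module, so the PID caveat of Remark \ref{remark:PID} is satisfied). Coformality means there is a quasi-isomorphism of dg algebras $C_*(\Omega M;\ZZ) \simeq U$, which induces an isomorphism of Hochschild cohomology algebras $HH^*(C_*(\Omega M;\ZZ),C_*(\Omega M;\ZZ)) \cong HH^*(U,U)$. Equivalently, one may invoke Theorem \ref{stringtopologyformaltheorem}(2) directly, which, combined with Theorem \ref{thm:koszulhochschild}(2) applied to the Koszul $A_\infty$-coalgebra $C = H_*(M;\ZZ)$ constructed in the proof of Theorem \ref{moduledescriptiontheorem}, yields
$$H_{*+7}(LM;\ZZ) \cong H_*\Hom^{\kappa_C}(C,U) \cong HH^*(U,U)$$
as graded algebras.

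Finally, Theorem \ref{thm:7manifoldhochschild} provides the explicit presentation of $HH^*(U,U)$ as an algebra over $\mathcal{Z}(U)$, which is the description referred to by ``the above description.'' Since each step is an isomorphism of graded algebras, composing them yields the desired isomorphism. There is no serious obstacle here: the heavy lifting has already been carried out (proving coformality, computing $U$, computing the Hochschild cohomology, and establishing the general string-topology formula). The corollary is essentially a bookkeeping step that identifies the output of Theorem \ref{thm:7manifoldhochschild} with $H_{*+7}(LM;\ZZ)$ via Theorems \ref{stringtopologyhochschildtheorem} and \ref{stringtopologyformaltheorem}.
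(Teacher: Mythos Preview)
Your proposal is correct and follows essentially the same approach as the paper: the paper's proof is the single sentence ``This follows from Theorem \ref{stringtopologyformaltheorem},'' and you have simply unpacked that citation by spelling out the ingredients (coformality of $M$ over $\ZZ$, freeness of $U$, and the identification $H_*\Hom^{\kappa_C}(C,U)\cong HH^*(U,U)$ from Theorem \ref{thm:koszulhochschild}) that feed into it.
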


\begin{proof}
This follows from Theorem \ref{stringtopologyformaltheorem}.
\end{proof}

\subsection*{Acknowledgments}
The first author was supported by the Swedish Research Council through grant no.~2015-03991.

\begin{bibdiv}
    \begin{biblist}[\normalsize]

\bib{Berglund}{article}{
   author={Berglund, Alexander},
   title={Koszul spaces},
   journal={Trans. Amer. Math. Soc.},
   volume={366},
   date={2014},
   number={9},
   pages={4551--4569},
}

\bib{Berglund2}{article}{
   author={Berglund, Alexander},
   title={Homological perturbation theory for algebras over operads},
   journal={Algebr. Geom. Topol.},
   volume={14},
   date={2014},
   number={5},
   pages={2511--2548},
}
%


\bib{BerglundBorjeson}{article}{
   author={Berglund, Alexander},
   author={B\"orjeson, Kaj},
   title={Free loop space homology of highly connected manifolds},
   journal={Forum Math.},
   volume={29},
   date={2017},
   number={1},
   pages={201--228},
}




\bib{ChasSullivan}{article}{
  author={Chas, Moira},
  author={Sullivan, Dennis},
  title={String Topology},
  eprint={arXiv:math/9911159 [math.GT]},
  year={1999},
}

%

\bib{CohenJones}{article}{
   author={Cohen, Ralph L.},
   author={Jones, John D. S.},
   title={A homotopy theoretic realization of string topology},
   journal={Math. Ann.},
   volume={324},
   date={2002},
   number={4},
   pages={773--798},
   issn={0025-5831},
}

\bib{CohenJonesYan}{article}{
  author={Cohen, Ralph L.},
  author={Jones, John D. S.},
  author={Yan, Jun},
  title={The loop homology algebra of spheres and projective spaces},
  conference={
     title={Categorical decomposition techniques in algebraic topology},
     address={Isle of Skye},
     date={2001},
  },
  book={
     series={Progr. Math.},
     volume={215},
     publisher={Birkh\"auser, Basel},
  },
  date={2004},
  pages={77--92},
}
%
%


\bib{DotsenkoVallette}{article}{
   author={Dotsenko, Vladimir},
   author={Vallette, Bruno},
   title={Higher Koszul duality for associative algebras},
   journal={Glasg. Math. J.},
   volume={55},
   date={2013},
   number={A},
   pages={55--74},
   issn={0017-0895},
}

\bib{FelixHalperinThomas}{article}{
   author={F\'elix, Yves},
   author={Halperin, Steve},
   author={Thomas, Jean-Claude},
   title={Differential graded algebras in topology},
   conference={
      title={Handbook of algebraic topology},
   },
   book={
      publisher={North-Holland, Amsterdam},
   },
   date={1995},
   pages={829--865},
}

%
%
%
%
%
%

\bib{FelixOpreaTanre}{book}{
  author={F{\'e}lix, Yves},
  author={Oprea, John},
  author={Tanr{\'e}, Daniel},
  title={Algebraic models in geometry},
  series={Oxford Graduate Texts in Mathematics},
  volume={17},
  publisher={Oxford University Press, Oxford},
  date={2008},
  pages={xxii+460},
}

%
%
%

\bib{Godin}{article}{
   author={Godin, Veronique},
   title={Higher string topology operations},
   eprint={arXiv:0711.4859v2 [math.AT]},
   year={2008},
}

%
%
%
%
%

\bib{GoreskyHingston}{article}{
  author={Goresky, Mark},
  author={Hingston, Nancy},
  title={Loop products and closed geodesics},
  journal={Duke Math. J.},
  volume={150},
  date={2009},
  number={1},
  pages={117--209},
}
%
%
%


\bib{HalperinStasheff}{article}{
   author={Halperin, Stephen},
   author={Stasheff, James},
   title={Obstructions to homotopy equivalences},
   journal={Adv. in Math.},
   volume={32},
   date={1979},
   number={3},
   pages={233--279},
}

%
%
%
%
%
%
%
%
%
%

\bib{Keller}{article}{
   author={Keller, Bernhard},
   title={Introduction to $A$-infinity algebras and modules},
   journal={Homology Homotopy Appl.},
   volume={3},
   date={2001},
   number={1},
   pages={1--35},
}

%
%
%
%
%


\bib{Lefevre-Hasegawa}{article}{
  author={Lef\`evre-Hasegawa, Kenji},
  title={Sur les A-infini cat\'egories},
  eprint={arXiv:math/0310337v1 [math.CT]},
  year={2003},
}

%
%
%

\bib{LodayVallette}{book}{
   author={Loday, Jean-Louis},
   author={Vallette, Bruno},
   title={Algebraic operads},
   series={Grundlehren der Mathematischen Wissenschaften},
   volume={346},
   publisher={Springer},
   place={Heidelberg},
   date={2012},
   pages={xxiv+634},
   isbn={978-3-642-30361-6},
}

%

\bib{Malm}{thesis}{
   author={Malm, Eric},
   title={String topology and the based loop space},
   date={2010},
   organization={Stanford University},
   type={PhD thesis},
}   

%
%
%

\bib{HeLu}{article}{
   author={He, Ji-Wei},
   author={Lu, Di-Ming},
   title={Higher Koszul algebras and $A$-infinity algebras},
   journal={J. Algebra},
   volume={293},
   date={2005},
   number={2},
   pages={335--362},
   issn={0021-8693},
}

%
%
%
%

\bib{PolishchukPositselski}{book}{
  author={Polishchuk, Alexander},
  author={Positselski, Leonid},
  title={Quadratic algebras},
  series={University Lecture Series},
  volume={37},
  publisher={American Mathematical Society, Providence, RI},
  date={2005},
  pages={xii+159},
  isbn={0-8218-3834-2},
}

\bib{Priddy}{article}{
   author={Priddy, Stewart B.},
   title={Koszul resolutions},
   journal={Trans. Amer. Math. Soc.},
   volume={152},
   date={1970},
   pages={39--60},
}



\bib{Saleh}{article}{
   author={Saleh, Bashar},
   title={Noncommutative formality implies commutative and Lie formality},
   journal={Algebr. Geom. Topol.},
   volume={17},
   date={2017},
   number={4},
   pages={2523--2542},
}

\bib{Stasheff}{article}{
   author={Stasheff, James Dillon},
   title={Homotopy associativity of $H$-spaces. I, II},
   journal={Trans. Amer. Math. Soc. 108 (1963), 275-292; ibid.},
   volume={108},
   date={1963},
   pages={293--312},
}

\end{biblist}
\end{bibdiv}

\end{document}